\title[Density-constrained Chemotaxis and Hele-Shaw Flow]{Density-constrained Chemotaxis and Hele-Shaw flow}
\author[I. Kim]{Inwon Kim}
\address{Department of Mathematics, UCLA,  Los Angeles, CA} 
\email{ikim@math.ucla.edu}
\author[A.Mellet]{Antoine Mellet}
\address{Department of Mathematics, University of  Maryland, College Park, MD}
\email{mellet@umd.edu}
\author[Y. Wu]{Yijing Wu}
\address{Department of Mathematics, University of  Maryland, College Park, MD}
\email{yijingwu@umd.edu}
\thanks{I. Kim was partially supported by NSF Grant DMS-1900804. \\
A. Mellet was partially supported by NSF Grant DMS-2009236.
}
\def\R{\mathbb R}
\def\eps{\varepsilon}
\def\e{\varepsilon}
\def\P{\mathcal P}
\def\vphi{\varphi}
\def\pa{\partial}
\def\na{\nabla}
\def\div{\mathrm{div}\,}
\def\BV{\mathrm{BV}}
\newcommand{\J}{\mathscr{J}}
\newcommand{\F}{\mathscr{F}}
\def\H{\mathcal H}
\numberwithin{equation}{section}
\newtheorem{theorem}{Theorem}[section]
\newtheorem{theorem*}{Theorem}
\newtheorem{remark}[theorem]{Remark}
\newtheorem{lemma}[theorem]{Lemma}
\newtheorem{proposition}[theorem]{Proposition}
\newtheorem{corollary}[theorem]{Corollary}
\newtheorem{definition}[theorem]{Definition}
\begin{document}

\begin{abstract}
We consider a model of congestion dynamics with chemotaxis, where the density of cells follows the chemical signal it generates, while observing an incompressibility constraint.
We show that when the chemical diffuses slowly and attracts the cells strongly, then
the dynamics of the congested cells is well approximated by a surface-tension driven free boundary problem. 
More precisely, we show that in this limit the density of cell converges to the characteristic function of a set whose  evolution is described by a Hele-Shaw free boundary problem with surface tension.
Our problem is set in a bounded domain, which leads to an interesting analysis on the limiting boundary conditions for the density function.
Namely, we prove that the assumption of Robin boundary conditions for the chemical potential leads to a contact angle condition for the free interface.
\end{abstract}

\maketitle

\section{Introduction}
\subsection{A model for chemotaxis with density constraint}
The classical parabolic-elliptic Patlak-Keller-Segel model for chemotaxis reads:
$$
\begin{cases}
\pa_t \rho - \mu \Delta \rho +\chi \div(\rho \na \phi)=0 ,\\
 \eta  \Delta \phi  +\theta \rho - \sigma\phi =0, \end{cases}
$$
where $\rho$ denotes the cell density and $\phi$ the concentration of some chemical.
The nonnegative parameters $\mu$ and $\eta$ are the cell and chemical diffusivity, $\chi$ is the cell sensitivity, and $\theta$ and $\sigma$ describe the production and degradation of the chemical (see \cite{Keller_Segel}, \cite{Patlak}, \cite{HLP}). 

%\textcolor{blue}{
%For convenience, we replace $\rho$ with $\rho/\rho_M$ and $\phi$ with $\phi/(\rho_M \theta)$ to get:
%\begin{equation}\label{eq:transpcons}
%\begin{cases}
%\pa_t \rho - \mu \Delta \rho+\beta \div(\rho P_{C(\rho)} (\na \phi))=0 , \qquad\rho\leq 1 \\
% \sigma \phi -  \eta\Delta \phi = \rho.
%\end{cases}
%\end{equation}
%with $\beta = \chi \rho_M \theta$.
%}\medskip

In this model, the diffusion  competes with the aggregating potential $\phi$, leading to the well-known phenomena  of concentration  and finite time blow-up of the density (see e.g. \cite{DP}, \cite{HV}).
In order to investigate the behavior of the density $\rho$ after saturation occurs we take into account the incompressibility of the cells by imposing a constraint 
$\rho\leq \rho_M$.  We replace
$\rho$ with $\rho/\rho_M$ and $\phi$ with $\phi/(\rho_M \theta)$ so that $\rho_M=\theta=1$ and denote  $\bar\chi = \chi \rho_M\theta$.  We are then led to the equation (see \cite{S_survey,KMW} for details):
\begin{equation}\label{eq:weakHS}
\begin{cases}
\pa_t \rho -\mu\Delta \rho + \bar\chi\div( \rho \na \phi) -\Delta  p=0 , \qquad \rho\leq 1\\
 \sigma \phi -  \eta\Delta \phi = \rho,
 \end{cases}
\end{equation}
where the pressure $p$ is a Lagrange multiplier for the contraint $\rho\leq1$, and satisfies
$$ p\geq 0, \qquad p(1-\rho) = 0 \mbox{ a.e.}$$
Similar models have been used in particular in the study of congested crowd motion (see \cite{S_survey}).
% where $\na \phi - \na p$ is interpreted as the projection of the desired velocity field $\na \phi$ onto the set  of admissible velocities, which preserve the constraint $\rho\leq1$. \textcolor{olive}{I think it can be a bit confusing here, because the form of equation should be different here. maybe don't explain this perspective unless we write the above equation in terms of $\div (\rho\nabla p)$?} 
The conditions on $p$ can also be expressed by writing $p\in P(\rho)$ with
\begin{equation}\label{eq:P} P(\rho):= \begin{cases}
0 & 0\leq \rho <1 \\
[0,\infty) &  \rho =1
\end{cases}
\end{equation}
which is sometimes referred to as the Hele-Shaw graph.

\medskip

In a companion paper \cite{KMW}, we proved the existence and uniqueness of a weak solution for \eqref{eq:weakHS}
 and investigated its relation to some free boundary problems.
In this paper, we will investigate the singular limit of strong attraction ($\chi\gg\mu $) and small chemical diffusion ($\eta\ll1$) and prove that the model is asymptotically close to a Hele-Shaw free boundary problem with surface tension \eqref{eq:HSSTintro}. This establishes the first rigorous link between a general Chemotaxis system and Hele-Shaw flow with surface tension (to the best of our knowledge).

\medskip

We also aim to analyze the behavior of the solutions of \eqref{eq:weakHS} near a fixed boundary, by setting the problem in a bounded domain $\Omega \subset \R^d$. In particular we are interested in the effect on the dynamics of different absorption rates of the chemical at the boundary. For full generality, we will use Robin boundary conditions for $\phi$ with a fixed parameter for absorption rate. For the density, we impose Neumann boundary conditions which ensure the conservation of cell density.
%$$ [-\mu \na \rho^\eps +\eps^{-1} \rho^\eps \na {\phi^\eps} -\na p^\eps ]\cdot n=0 \qquad \mbox{ on } \pa\Omega.$$ 
\medskip

%Finally, since the goal of this paper is to characterize the effects of the potential $\phi(x,t)$ on the dynamic of $\rho (x,t)$ when the diffusivity of the chemical is small and the attractive potential dominates the  dynamics, 
Above discussions, by setting $\eta=\eps^2$ and $\bar\chi = \eps^{-1}$ for small $\eps>0$, lead to the system \eqref{eq:weak}-\eqref{eq:phi0}:

\begin{equation}\label{eq:weak}
\begin{cases}
\pa_t \rho  -\mu \na \rho + \div(\eps^{-1}\rho \na \phi-\na p)=0 ,\quad  \mbox{ in } \Omega\times(0,\infty), \qquad p\in P(\rho)\\
(-\mu\na \rho +\eps^{-1}\rho \na \phi-\na p)\cdot n = 0,\quad  \mbox{ on }\pa \Omega\times(0,\infty)\\
\rho(x,0) = \rho_{in}(x)\quad  \mbox{ in } \Omega,
\end{cases}
\end{equation}
with $\phi$ solving
\begin{equation}\label{eq:phi0}
\begin{cases}
\sigma \phi -\eps^2\Delta  \phi = \rho  &  \mbox{ in } \Omega\\
 \alpha \phi + \beta \eps \nabla \phi \cdot n = 0 & \mbox{ on } \pa\Omega.
\end{cases}
\end{equation}
%The Neumann boundary condition in \eqref{eq:weak} is natural and guarantees that $\int_\Omega \rho(t)\, dx$ is preserved. 
%We recall (see the introduction) that the pressure $p(x,t)$, as in the equations of incompressible fluid mechanics, is a Lagrange multiplier for the incompressibility constraint $\rho\leq 1$. In particular (see \eqref{eq:P})
% the condition $p\in P(\rho)$ implies $\rho\leq 1$, $p\geq 0$ and $p(1-\rho)=0$ a.e.  in  $\Omega\times(0,\infty)$.
%\medskip
Note that the scaling of the continuity equation can also be obtained by rescaling the time variable so that we observe the evolution of $\rho$ at a  time scale $\bar t\sim \eps^{-1}/\bar\chi$, under the assumption that $\mu =\mathcal O(\eps\bar\chi)$. 
%The scaling of the coefficient $\alpha$ and $\beta$ in the Robin boundary condition is the scaling in which both coefficient play a non trivial role.

\medskip

Throughout the paper, we assume that $\alpha$, $\beta$ and $\sigma$ are constants satisfying
$$ \sigma>0 ,\quad  \alpha\geq 0,\quad  \beta \geq 0,\quad \alpha+\beta>0.$$
The assumption $\sigma>0$ is important. When $\sigma=0$, the function $\bar \phi = \eps^2\phi$ is the usual Newtonian potential (up to the boundary condition on $\pa \Omega$),  which does not localize in the limit $\e\to 0$: see the discussion below \eqref{eq:HSSTintro}.  By contrast, when $\sigma>0$, we have $\phi\sim \frac 1 \sigma \rho$ when $\eps\ll1$ and the effect of $\eps^{-1}\na \phi$ on the dynamic of the saturated regions is akin to that of surface tension.

\medskip

\subsection{Relation to Hele-Shaw free boundary problems}

When $\mu=0$,  \eqref{eq:weak}-\eqref{eq:phi0} is a weak formulation for the free boundary problem
\begin{equation}\label{eq:HS1}
\begin{cases}
\rho\in[0,1), \quad p =0, \qquad \pa_t \rho  + \eps^{-1}  \div(\rho \na \phi) =0  & \mbox{ in } \Omega\setminus \Omega_s(t) \\
\rho =1, \quad p>0, \qquad \Delta p =\eps^{-1} \Delta \phi & \mbox{ in } \Omega_s(t),
\end{cases}
\end{equation}
where $\Omega_s(t) = \{ \rho(t)=1\}$ denotes the saturated density set and the 
free boundary $\Sigma(t) =\pa\Omega_s(t)\cap \Omega$ moves according to the velocity law
\begin{equation}\label{eq:HS2}
 (1-\rho|_{\Omega_s^C}) V = (-\na p+\eps^{-1} \na\phi)\cdot \nu  |_{\Omega_s}.
 \end{equation}
(Here $V$ denotes the outward normal velocity of $\Sigma(t)$ and $\nu$ denotes the outward normal of $\Omega_s(t)$).
In particular, when the density is a characteristic function $\rho (x,t)= \chi_{\Omega_s(t)}(x)$, we recognize the usual one phase Hele-Shaw problem without surface tension, 
which we can write with the modified pressure $q = p +\eps^{-1} \rho \left( \frac{1}{2\sigma} - \phi\right)$, as
\begin{equation}\label{eq:weakHSepsq}
\begin{cases}
\Delta q = 0 \mbox{ in } \Omega_s(t), \qquad q = \eps^{-1} \left( \frac{1}{2\sigma} - \phi\right)\mbox{ on } \Sigma(t)\\
V  = -\na q \cdot \nu  \quad  \mbox{ on } \Sigma(t).
\end{cases}
\end{equation}
In other words, in this fully  saturated regime, the chemotaxis system \eqref{eq:weak}-\eqref{eq:phi0}
 can be seen as a free boundary problem describing the motion of the region occupied by the cell, driven by the chemical concentration $\phi$ and the pressure variable $p$. 
Since we obtained \eqref{eq:weak}-\eqref{eq:phi0} by imposing the constraint $\rho\leq1 $, but without requiring $\rho\in\{0,1\}$, it is not clear that we should actually have  $\rho (x,t)= \chi_{\Omega_s(t)}(x)$ in general. In \cite{KMW}, we proved that if $\rho$ is a characteristic function at $t=0$, then this remains true at positive times for \eqref{eq:weak}-\eqref{eq:phi0} when $\mu=0$ .
On the other hand when $\mu>0$ the density is never a characteristic function.  Indeed  in this case the saturated set interacts with the unsaturated part of the density by a Richards-type problem, as shown in \cite{KMW}.

\medskip

Nevertheless, we will show in this paper that, in the limit $\eps\to 0$, the effect of the attractive potential is strong enough to ensure the convergence of $\rho$ to a characteristic function $\chi_{\Omega_s(t)}(x)$ for all $\mu \geq 0$. We will then show that the asymptotic dynamic of $\Omega_s(t)$ is described by the Hele-Shaw free boundary problem with surface tension
\begin{equation}\label{eq:HSSTintro}
\begin{cases}
\Delta q= 0 \mbox{ in } \Omega_s(t), \qquad q  =  \frac{\kappa }{4\sigma^{3/2}} \mbox{ on } \Sigma(t)\\
V  = -\na q \cdot \nu \quad   \mbox{ on } \Sigma(t).
\end{cases}
\end{equation}
where $\kappa$ denotes the mean curvature of the free boundary $\Sigma(t)$ (taken to be positive when  $\Omega_s(t)$ is convex). 
Formally, we can get \eqref{eq:HSSTintro} from
 \eqref{eq:weakHSepsq} by proving that the quantity $\eps^{-1} \left( \frac{1}{2\sigma} - \phi\right)$ is an approximation of the mean-curvature of $\Omega_s$ when $\eps\ll1$. 

\medskip

Note that, as $\sigma$ tends to zero, the weight on surface tension grows  to infinity in \eqref{eq:HSSTintro}. Thus heuristically we expect that the limit density support will re-adjust itself into a ball at time scale of order $\sigma^{3/2}$ (and instantly when $\sigma=0$). This is consistent with the convergence to radial solutions of \eqref{eq:weakHS} when $\sigma=\mu=0$: see \cite{CKY} and \cite{HLP} for further discussions.

\subsection{The presence of bounded domain} 
Our result for $\mu=0$ bears similarities with \cite{JKM}, where the emergence of surface tension and derivation of a Muskat problem is studied via a variational approximation. 
In that paper, the potential $\phi$ solves $\phi_t - \Delta \phi =\rho$ (in $\R^d$) and instead of the Keller-Segel system, the authors considers a discrete-time approximations constructed via a JKO scheme.
A similar variational analysis is performed in \cite{LO} for the $L^2$-based thresholding scheme. 

\medskip

Note that both \cite{JKM} and \cite{LO} consider the setting of periodic torus or entire $\R^d$ for the interaction energy, in which case $\phi$ can be written as a convolution with the heat kernel. Such a representation of $\phi$, as well as the symmetry of the heat kernel in space variables, played an important role in the analysis of the aforementioned papers, in particular when deriving the weak limit equation. 
The fact that our problem is set in a bounded domain presents an interesting challenge to this analysis. In particular this necessitates a more PDE-oriented proof of Proposition~\ref{prop:firstvar}, replacing corresponding proofs in \cite{LO} and \cite{JKM}. Our result appears to be the first that links a Keller-Segel system with a Hele-Shaw flow with surface tension, regardless of the choice of the domain. This connection was also suggested in the very recent paper \cite{Perthame}, where the incompressible limit of a generalized version of Keller-Segel system is investigated. The model is a variant of Cahn-Hilliard equation, which can be seen as a diffuse-interface approximation of our Hele-Shaw flow with surface tension. 
%\textcolor{red}{do we want to discuss the work of Elbar-Perthame- Poulain somewhere? Not sure if it is quite relevant but...}

%\textcolor{olive}{We should also comment on Antoine's nice rearrangement of the energy in section 3 to deal with non-characteristic density, though I am  not sure where this would go.} 

\medskip

Another novel feature of our analysis, also related to the bounded domain, is the characterization of the free boundary behavior near the fixed boundary $\pa\Omega$. Of particular interest, in the context of the singular limit $\eps \to 0$, is how the Robin boundary conditions imposed on $\phi$ plays a role on the dynamic of $\Omega_s(t)$. 
We will show that \eqref{eq:HSSTintro} must be supplemented by the contact angle condition 
\begin{equation}\label{eq:CAC}
\cos(\theta)=\gamma:=-\min\left( 1,\frac{2\alpha}{\alpha+\sqrt \sigma \beta} \right) \,\,\hbox{ on } \Sigma(t) \cap \pa\Omega,
\end{equation}
where $\theta$ is the angle formed by the free surface $\Sigma(t)$ and the fixed boundary $\pa \Omega$, measured from inside of the set $\Omega_s(t)$,  and the fixed boundary $\pa\Omega$ at the triple junction $\Sigma(t)\cap \pa\Omega$ (see Figure 1). 
In particular, for Neuman condition $\alpha=0$ (zero absorption of chemicals), the contact must be orthogonal, while for Dirichlet condition $\beta=0$ (and whenever the absortion rate $\alpha$ is bigger or equal than $\sqrt \sigma \beta$), the contact must be tangential.

\medskip

\subsection{Notations and definitions}

Throughout the  paper $\Omega$ is a smooth bounded domain in $\R^d$.

We will use the following definition of weak solutions of \eqref{eq:weak}-\eqref{eq:phi0}, as in \cite{KMW}:
\begin{definition}\label{def:weak}
The pair of functions $(\rho,p)$ is a weak solution of \eqref{eq:weak}-\eqref{eq:phi0} if 
$\rho \in L^1(0,\infty;L^\infty(\Omega)) \cap C^{1/2}(0,\infty;H^{-1}(\Omega))$, $p\in L^2(0,\infty;H^1(\Omega))$ with
%$\rho\leq 1$, $p\geq 0$ and $p(1-\rho)=0$ a.e.  in  $\Omega\times(0,\infty)$.
% $p\in P(\rho)$
%, \quad p\in ?, \quad \phi \in ?$$
$$0\leq \rho\leq 1,\quad  p\geq 0, \quad (1-\rho) p=0 \quad\mbox{ a.e. in } \Omega\times  (0,\infty)$$
and the followings hold:
\begin{equation}\label{eq:weak11}
\int_\Omega \rho_{in} (x) \zeta(x,0)\, dx + \int_0^\infty \int_\Omega \rho\, \pa_t\zeta + \rho v \cdot \na \zeta \, dx = 0 
\end{equation}
for any function $\zeta\in C^\infty_c(\overline \Omega\times[0,\infty))$ and for some $v\in L^2(\Omega\times(0,\infty),d\rho)$  satisfying 
\begin{equation}\label{eq:weak12}
\int_0^\infty \int_\Omega  \rho v\cdot \xi - \eps^{-1}\rho\na \phi \cdot \xi - \mu \rho \, \div\xi - p\, \div \xi\, dx \, dt= 0
\end{equation}
for any vector field  $\xi \in C^\infty_c(\overline \Omega\times(0,\infty) ; \R^d)$ such that $\xi \cdot n=0$ on $\pa\Omega$ and with $\phi$ given by \eqref{eq:phi0}.
\end{definition}
%We recall that $\P(\Omega)$ is equipped with the Wasserstein distance $W_2$, so the condition
%$\rho \in C^{1/2}(0,\infty;\P(\Omega))$ means that $W_2(\rho(t),\rho(s)) \leq C|t-s|^{\frac1 2 }$. In view of Lemma \ref{lem:tech}, this implies also that $\rho \in C^{1/2}(0,\infty;H^{-1}(\Omega))$.
\medskip

Equality \eqref{eq:weak11} is the usual weak formulation for the continuity equation $\pa_t \rho + \div (\rho v)=0$ with Neumann boundary conditions and initial condition $\rho_{in}$. Equation \eqref{eq:weak12} is equivalent to the equality $\rho v =  \eps^{-1}\rho \na \phi - \na p$ in $L^{2}(\Omega\times(0,\infty))$.
It is written in this way to make it easy to compare with Definition~\ref{def:weak2} below (see \eqref{eq:weak12b}).
\medskip

In \cite{KMW} we prove the existence and uniqueness of a weak solution in the sense of Definition \ref{def:weak}, using the fact that it is a gradient flow with respect to the Wasserstein metric. Here the free energy is given by  
$$ 
\mu \int_\Omega \rho \log \rho\, dx - \frac 1 {2\eps}  \int_\Omega \rho\, {\phi^\eps} \, dx, \hbox{with the constraint } \rho \leq 1. 
$$
This energy structure of the equation will also play a key role in this paper.
Because this energy does not behave well when $\eps\ll1$, we will work instead with the functional
$$
\F_\eps(\rho) 
%&  =  \mu \int_\Omega \rho \log \rho\, dx + \frac 1 {2\sigma \eps}  \int_\Omega \rho\, dx  - \frac 1 {2\eps}  \int_\Omega \rho\, {\phi^\eps} \, dx \\
 = 
 \begin{cases}
\displaystyle   \mu \int_\Omega \rho \log \rho\, dx + \frac 1 {2\sigma \eps}  \int_\Omega \rho\,(1 -\sigma {\phi^\eps}) \, dx, & \mbox{ if } 0\leq \rho(x) \leq 1 \mbox{ a.e.}\\
\infty & \mbox{ otherwise .}
\end{cases}
$$
Since $\int_\Omega \rho\, dx$ is preserved  by the equation we are only adding a constant to the energy, but this constant is important when $\eps\ll1$ (it was proved in \cite{MW} that $\F_\eps(\rho)$ is bounded uniformly in $\eps$ when $\rho=\chi_E \in \BV(\Omega; \{0,1\})$). 
%In addition, we proved in \cite{KMW} that the solution of  \eqref{eq:weak}-\eqref{eq:phi0}  is unique.
The following result was proved in  \cite{KMW}:
\begin{theorem}[\cite{KMW}]\label{thm:existence} 
For any $\eps>0$ and any initial condition $\rho_{in}$ satisfying
$$0\leq \rho_{in} \leq 1\quad \mbox{ a.e.  in } \Omega,$$
there exists a unique $(\rho^\eps,p^\eps)$ 
%the interpolations $\rho^{\tau_k,\eps}$ defined by \eqref{eq:interrhop}  converge uniformly in $[0,T]$ with respect to $W_2$ to $\rho^\eps$  and $p^{\tau_k,\eps}$ converges weakly in $L^2(0,T;H^1(\Omega))$ to $p^\eps$ where $(\rho^\eps,p^\eps)$ is the {\em unique} 
weak solution of \eqref{eq:weak}-\eqref{eq:phi0} in  the sense of Definition \ref{def:weak}.
Furthermore, $\rho^\eps$ satisfies the energy inequality
\begin{equation}\label{eq:energy}
\F_\eps(\rho^\eps (t)) + \int_0^t \int_\Omega |v^\eps |^2 \rho^\eps \, dx\,dt\leq\F_\eps(\rho_{in}) \quad \forall t>0
\end{equation}
with $v^\eps$ defined as in Definition \ref{def:weak}.
\end{theorem}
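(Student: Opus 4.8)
The plan is to construct the solution via the minimizing-movement (JKO) scheme associated with the Wasserstein gradient-flow structure described above, and then obtain uniqueness from the (generalized) convexity of the driving energy for fixed $\eps$. Fix $\eps>0$ and write $\phi^\eps=\phi^\eps[\rho]$ for the solution of the linear elliptic problem \eqref{eq:phi0}; since $(\sigma-\eps^2\Delta)^{-1}$ (with the Robin boundary condition) is a bounded, positive, self-adjoint operator, the interaction term $\int_\Omega\rho\,\phi^\eps\,dx=\langle\rho,(\sigma-\eps^2\Delta)^{-1}\rho\rangle$ is a nonnegative quadratic form in $\rho$, and the free energy $\E_\eps(\rho)=\mu\int_\Omega\rho\log\rho\,dx-\frac1{2\eps}\int_\Omega\rho\,\phi^\eps\,dx$ is a convex entropy plus a concave (attractive) quadratic. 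On the geodesically convex admissible set $K=\{\rho\in\P(\Omega):0\le\rho\le1,\ \int_\Omega\rho=\int_\Omega\rho_{in}\}$ I would run the scheme $\rho_{k+1}\in\mathrm{argmin}_{\rho\in K}\big\{\frac1{2\tau}W_2^2(\rho,\rho_k)+\E_\eps(\rho)\big\}$, with $\rho_0=\rho_{in}$ and time step $\tau$.

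First I would establish the basic properties of the scheme: existence of a minimizer at each step by the direct method (lower semicontinuity of $W_2^2$ and of the entropy under weak-$*$ convergence, boundedness of the interaction term, and weak-$*$ closedness of $K$, which also makes $0\le\rho_{k+1}\le1$ and mass conservation automatic); and the standard total-distance estimate $\sum_k \frac{1}{2\tau}W_2^2(\rho_{k},\rho_{k+1})\le \E_\eps(\rho_{in})-\inf_K\E_\eps<\infty$. Next I would compute the first variation of the scheme: perturbing the minimizer along the flow of a test vector field $\xi$ with $\xi\cdot n=0$ on $\pa\Omega$, and using the constraint $\rho\le 1$, produces an Euler--Lagrange relation in which the first variation of the entropy yields the diffusion flux $-\mu\na\rho$, that of the interaction yields the drift $\eps^{-1}\rho\na\phi^\eps$, and a nonnegative Lagrange multiplier $p_{k+1}$ appears, supported on $\{\rho_{k+1}=1\}$, i.e. $p_{k+1}\in P(\rho_{k+1})$. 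This is the discrete analogue of \eqref{eq:weak12}, with the optimal transport map supplying the discrete velocity.

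Then I would pass to the limit $\tau\to0$. The distance estimate yields, for the piecewise-constant interpolant, the uniform $\tfrac12$-Hölder bound $W_2(\rho^\tau(t),\rho^\tau(s))\le C|t-s|^{1/2}$; since $0\le\rho\le1$, $W_2$ controls the $H^{-1}$ distance, giving the claimed $C^{1/2}(0,\infty;H^{-1}(\Omega))$ regularity and, together with the entropy and pressure bounds, enough compactness (Aubin--Lions type) for strong $L^1$ convergence of $\rho^\tau$ and weak $H^1$ convergence of $p^\tau$. Passing to the limit in the discrete weak formulation then gives \eqref{eq:weak11}--\eqref{eq:weak12}; the complementarity $(1-\rho)p=0$ survives because $\rho^\tau\to\rho$ strongly while $p^\tau\rightharpoonup p$ weakly. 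The energy inequality \eqref{eq:energy} follows from the discrete dissipation inequality and lower semicontinuity of $\F_\eps$, which differs from $\E_\eps$ by the constant $\frac{1}{2\sigma\eps}\int_\Omega\rho\,dx$ conserved by the flow.

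The main obstacle is uniqueness, which is genuinely delicate because the attractive interaction enters $\E_\eps$ as a concave (negative-definite) quadratic, so the energy is not geodesically convex in the usual sense and the singular, non-convolution Green's kernel on $\Omega$ obstructs a clean $\lambda$-convexity argument. I would therefore prove uniqueness by a direct stability estimate: given two weak solutions $\rho^1,\rho^2$ with the same initial data, I would track the evolution of a suitable distance (for instance $W_2^2(\rho^1,\rho^2)$, or $\|\rho^1-\rho^2\|_{H^{-1}}^2$) and derive a Gronwall inequality. The constraint term contributes with a favorable sign because $P$ in \eqref{eq:P} is a maximal monotone graph, so the map $\rho\mapsto p$ is monotone and its contribution to the time derivative of the distance is nonpositive, and the diffusion $\mu\Delta\rho$ is likewise dissipative. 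The only term requiring work is the drift $\eps^{-1}\na\phi^\eps$, and here I would use that, at fixed $\eps$, $\rho\mapsto\phi^\eps$ is a bounded linear map from $L^2(\Omega)$ into $H^2(\Omega)$, so that $\na\phi^\eps$ and its dependence on $\rho$ are Lipschitz in the relevant norms, closing the argument with an $\eps$-dependent constant. Conceptually this reflects that $\E_\eps$ is $\lambda_\eps$-convex along generalized geodesics at fixed $\eps$, so one could alternatively invoke the contraction theory for $\lambda$-convex Wasserstein gradient flows once any weak solution is shown to be a curve of maximal slope.
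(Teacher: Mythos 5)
Your strategy --- existence via the JKO minimizing-movement scheme for the constrained Wasserstein gradient flow, with the pressure arising as the Lagrange multiplier of $\rho\le 1$ in the discrete Euler--Lagrange equation, followed by a stability estimate for uniqueness --- is exactly the route the paper attributes to \cite{KMW} and recalls in its final section, so there is no disagreement about the approach. However, two of your steps, as written, would not deliver the statement. The first concerns the energy inequality \eqref{eq:energy}: the discrete dissipation inequality is $\F_\eps(\rho^{n+1})+\frac{1}{2\tau}W_2^2(\rho^{n+1},\rho^n)\le\F_\eps(\rho^n)$, and since $W_2^2(\rho^{n+1},\rho^n)=\tau^2\int_\Omega|v^{n+1}|^2\rho^{n+1}\,dx$ for the discrete velocity $v^{n+1}=(x-T^{n+1}(x))/\tau$, summing and passing to the limit with lower semicontinuity yields only
\[
\F_\eps(\rho^\eps(t))+\frac12\int_0^t\int_\Omega|v^\eps|^2\rho^\eps\,dx\,dt\le\F_\eps(\rho_{in}),
\]
i.e.\ half of the claimed dissipation. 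Recovering the full factor requires the second half coming from the metric slope: De Giorgi's variational interpolation, lower semicontinuity of $|\pa\F_\eps|$, and identification of the slope with $\|v^\eps\|_{L^2(\rho^\eps)}$ along the limit curve (equivalently, an a posteriori chain-rule argument). ``Discrete dissipation plus lower semicontinuity'' alone does not give \eqref{eq:energy}. Relatedly, your Aubin--Lions claim of strong $L^1$ convergence of $\rho^\tau$ names no spatial bound uniform in $\tau$: when $\mu>0$ the entropy dissipation provides one, but when $\mu=0$ (a case the theorem covers) there is none at fixed $\eps$, and passing to the limit in the complementarity condition $(1-\rho^\tau)p^\tau=0$ then requires a monotonicity/limsup argument on $\int p^\tau\rho^\tau$ rather than strong convergence of the density.

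The second gap is in uniqueness, and again it is the case $\mu=0$ that breaks. In the $H^{-1}$ (or $W_2$) computation the drift difference splits as $(\rho_1-\rho_2)\na\phi_1^\eps+\rho_2\na(\phi_1^\eps-\phi_2^\eps)$; the second piece is indeed controlled by $C_\eps\|\rho_1-\rho_2\|_{H^{-1}}$, and the Hele--Shaw graph and the diffusion contribute with favorable signs as you say, but the first piece is only bounded by $C_\eps\|\rho_1-\rho_2\|_{L^2}\|\rho_1-\rho_2\|_{H^{-1}}$ and must be absorbed into the dissipation $-\mu\|\rho_1-\rho_2\|_{L^2}^2$, which vanishes at $\mu=0$. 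The natural repair there is to integrate by parts and use $\Delta\phi^\eps=\eps^{-2}(\sigma\phi^\eps-\rho^\eps)\in L^\infty$; but this gives only $D^2\phi^\eps\in \mathrm{BMO}$, hence a log-Lipschitz (not Lipschitz) velocity, and the estimate closes through an Osgood argument rather than a plain Gronwall one. So your assertion that the $L^2\to H^2$ boundedness of $\rho\mapsto\phi^\eps$ makes everything ``Lipschitz in the relevant norms'' is precisely where the argument fails. Finally, your fallback --- that $\E_\eps$ is $\lambda_\eps$-convex along generalized geodesics --- contradicts your own earlier, correct, remark: for $d\ge2$ the kernel of $(\sigma-\eps^2\Delta)^{-1}$ is singular on the diagonal, attractive interaction energies with such kernels are not Wasserstein semi-convex even on $\{\rho\le1\}$, and so the contraction theory for $\lambda$-convex flows cannot be invoked off the shelf.
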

%We also established in \cite{KMW} that the  pressure $p^\eps(x,t)$ satisfies the complementarity condition 
%$$ p^\eps ( \Delta p^\eps - \eps^{-1}\Delta {\phi^\eps} ) = 0 \mbox{ in }\mathcal D'((0,T)\times \Omega)$$

%Definition \ref{def:weak} only requires that $0\leq \rho\leq 1$. If we have $\rho(t) = \chi_{E(t)}$ and $(\rho,p)$ is a weak solution of  \eqref{eq:weak}-\eqref{eq:phi0} in the sense of Definition \ref{def:weak}, then $E(t)$ is a weak solution of the Hele-Shaw problem
%\begin{equation}\label{eq:HS1}
%\begin{cases}
%\Delta p = \eps^{-1}\Delta \phi & \mbox{ in } E(t) \\
%p=0 & \mbox{ on } \pa E(t)\cap \Omega\\
%(\eps^{-1}  \na \phi-\na p)\cdot n = 0 & \mbox{ on }\pa \Omega\cap E(t)\\
%V  = (-\na p+\eps^{-1}\na\phi)\cdot \nu &  \mbox{ on } \pa E(t)
%\end{cases}
%\end{equation}
%with $\phi$ given by \eqref{eq:phiG} (see for instance \cite{MPQ}).
%This problem can also be written in a more familiar way using the modified pressure $q=p+\eps^{-1} \rho \left(\frac{1}{2\sigma} -\phi\right)$:
%\begin{equation}\label{eq:HS2}
%\begin{cases}
%\Delta q =0 & \mbox{ in } E(t) \\
%q=\eps^{-1}  \left(\frac{1}{2\sigma} -\phi\right) & \mbox{ on } \pa E(t)\cap \Omega\\
%\na q\cdot n = 0 & \mbox{ on }\pa \Omega\cap E(t)\\
%V  = -\na q\cdot \nu &  \mbox{ on } \pa E(t).
%\end{cases}
%\end{equation}

The goal of this paper is to show that when $\eps\ll1$, the solution of \eqref{eq:weak}-\eqref{eq:phi0} given by Theorem \ref{thm:existence} converges to the solution of the following Hele-Shaw problem with surface tension:
\begin{equation}\label{eq:HSST}
\begin{cases}
\Delta q= 0 & \mbox{ in } \Omega_s(t)\\
q  =  \frac{\kappa}{4\sigma^{3/2}} & \mbox{ on } \Sigma(t)=\pa \Omega_s(t) \cap \Omega\\
\na q\cdot n = 0 & \mbox{ on } \pa\Omega \cap \Omega_s(t)\\
V  = -\na q \cdot \nu &  \mbox{ on } \Sigma(t)
\end{cases}
\end{equation}
together with the contact angle condition \eqref{eq:CAC}. Recall that $n$ and $\nu$ respectively denote the outward normal of $\Omega$ and $\Omega_s(t)$ at their boundary points.
%\begin{equation}\label{eq:CAC}
%\cos \theta =\gamma:=-\min\left( 1,\frac{2\alpha}{\alpha+\sqrt \sigma \beta} \right)
%\end{equation}
%at the triple junction $\Sigma(t) \cap \pa\Omega$. 

\medskip

The definition of a weak solution of \eqref{eq:HSST}-\eqref{eq:CAC} is parallel to the Definition \ref{def:weak}:
\begin{definition}\label{def:weak2}
The pair of functions $(\rho,q)$ is a weak solution of \eqref{eq:HSST}-\eqref{eq:CAC} if 
$$\rho \in  L^\infty(0,\infty; BV(\Omega;\{0,1\}))\cap C^{1/2}(0,\infty;\P(\Omega)),\qquad  q\in L^2(0,\infty;(C^s(\Omega))^*)$$ 
for some $s>0$
%, \quad p\in ?, \quad \phi \in ?$$
%$$0\leq \rho\leq 1,\quad  p\geq 0, \quad (1-\rho) p=0 \quad\mbox{ a.e. in } \Omega\times  (0,T).$$
and the followings hold:
\begin{equation}\label{eq:weak11b}
\int_\Omega \rho_{in} (x) \zeta(x,0)\, dx + \int_0^\infty \int_\Omega \rho\, \pa_t\zeta + \rho v \cdot \na \zeta \, dx = 0
\end{equation}
for any function $\zeta\in C^\infty_c(\overline \Omega \times[0,\infty))$ and for some $v\in (L^2(\Omega\times(0,T),d\rho))^d$  satisfying 
\begin{equation}\label{eq:weak12b}
\int_0^\infty \int_\Omega  \rho v \cdot \xi - q\, \div \xi\, dx \, dt= -\frac1 {4\sigma^{3/2}} \int_\Omega \left[  \div \xi - \nu\otimes \nu :D\xi\right] |\na \rho|
+ \frac \gamma {4\sigma^{3/2}} \int_{\pa\Omega} \left[  \div \xi - n\otimes n :D\xi\right]  \rho\,  d\mathcal H^{n-1}(x)
\end{equation}
for any vector field  $\xi \in C^\infty_c(\overline \Omega\times(0,\infty) ; \R^d)$ such that $\xi \cdot n=0$ on $\pa\Omega$.
\end{definition}

This definition, similar to the one given in \cite{LO} and \cite{JKM}, warrant several comments.
\begin{enumerate}
\item The condition $\rho \in  L^\infty(0,\infty; BV(\Omega;\{0,1\}))$ implies that for a.e. $t>0$ we have $\rho(t) = \chi_{\Omega_s(t)}$ for a set $\Omega_s(t) \subset \Omega$ with finite perimeter.

\item In \eqref{eq:weak12b}, $\nu = \frac{\na \rho}{|\na \rho|}$ stands for the $L^\infty$ density of $\na \rho$ with respect to the total variation $|\na \rho|$ (which exists by Radon-Nikodym's differentiation theorem).
Since $\rho(t) = \chi_{\Omega_s(t)} \in BV$, it is also the measure theoretic normal to the boundary $\Sigma(t)=\pa \Omega_s(t)$.
In particular, the term 
$\left( \nu\otimes \nu :D\xi\right) |\na \rho|$ is of the form
$ f(x,\lambda /|\lambda |)d|\lambda|$ with $f$ continuous and $1$-homogeneous and $\lambda =\na \rho$. The integral in  \eqref{eq:weak12b} thus makes sense (see for example \cite{serrin}).

\item Note that $q$ has very low regularity in this definition. Since $q\sim\kappa$ along $\Sigma(t)$, we cannot expect much more regularity on $q$ without improving the regularity of the free boundary $\Sigma(t)$.

\item As in Definition \ref{def:weak}, \eqref{eq:weak11b} is simply the weak formulation for the continuity equation $\pa_t \rho + \div (\rho v)=0$ with Neumann boundary conditions and initial data $\rho_{in}$.
Since $\rho=\chi_{\Omega_s(t)}$, it encodes the velocity law $V = v\cdot\nu$, the incompressibility condition $\div v=0$ in $\Omega_s(t)$ and the Neumann condition $v\cdot \nu=0$ on $\pa \Omega \cap \Omega_s(t)$.

\item By taking test functions $\xi$ supported in either $\{\rho=0\}$  or $\{\rho=1\}$, we see that
 Equation \eqref{eq:weak12b} implies $\na q=0$ in $\{\rho(t)=0\}$ and $v = -\na q$ in $\Omega_s(t)=\{\rho(t)=1\}$. Subtracting a constant if needed, we can in particular assume that $q=0$ in $\{\rho(t)=0\}$.
 For general test functions  $\xi$, and taking into account the right hand side of  \eqref{eq:weak12b} we further get 
 the surface tension condition $q  =  \frac{\kappa }{4\sigma^{3/2}} $ on $\Sigma(t)$ and the contact angle condition  \eqref{eq:CAC}. This can be seen by using the classical formula (for a smooth interface $\Sigma$):
\begin{equation}\label{eq:mc}
\int_\Sigma \div \xi - \nu\otimes\nu : D\xi   = \int_\Sigma \kappa\, \xi\cdot\nu + \int_\Gamma b\cdot \xi
\end{equation}
where $\nu$ is the normal vector to $\Sigma$,  $\kappa$ denotes the mean curvature of $\Sigma$ and $b$ is the conormal vector along $\Gamma = \pa \Sigma$.
Indeed, formally at least, the right hand side of   \eqref{eq:weak12b} is (using the fact that $\xi\cdot n=0$ on $\pa\Omega$):
\begin{align*}
& - \frac1 {4\sigma^{3/2}} \left[  \int_{\Sigma} \kappa \xi\cdot\nu + \int_\Gamma \vec b \cdot \xi \right] +\frac \gamma {4\sigma^{3/2}} \left[  \int_{\pa\Omega \cap \Omega_s} \kappa \xi\cdot n + \int_\Gamma \vec c \cdot \xi \right]\\
& =
 \frac1 {4\sigma^{3/2}} \left[  - \int_{\Sigma} \kappa \xi\cdot\nu +\int_\Gamma  \gamma   \vec c \cdot \xi -\vec b \cdot \xi\right]
\end{align*}
where $\vec b$ and $\vec c$ are unit conormal vectors along $\Gamma = \pa \Sigma \cap \pa\Omega$: $\vec b$ is tangent to $\Sigma$ while $\vec c$ is tangent to $\pa\Omega$ (see Figure \ref{fig:1}).

 	 	\begin{figure}[]
 	 		\includegraphics[width=.4\textwidth]{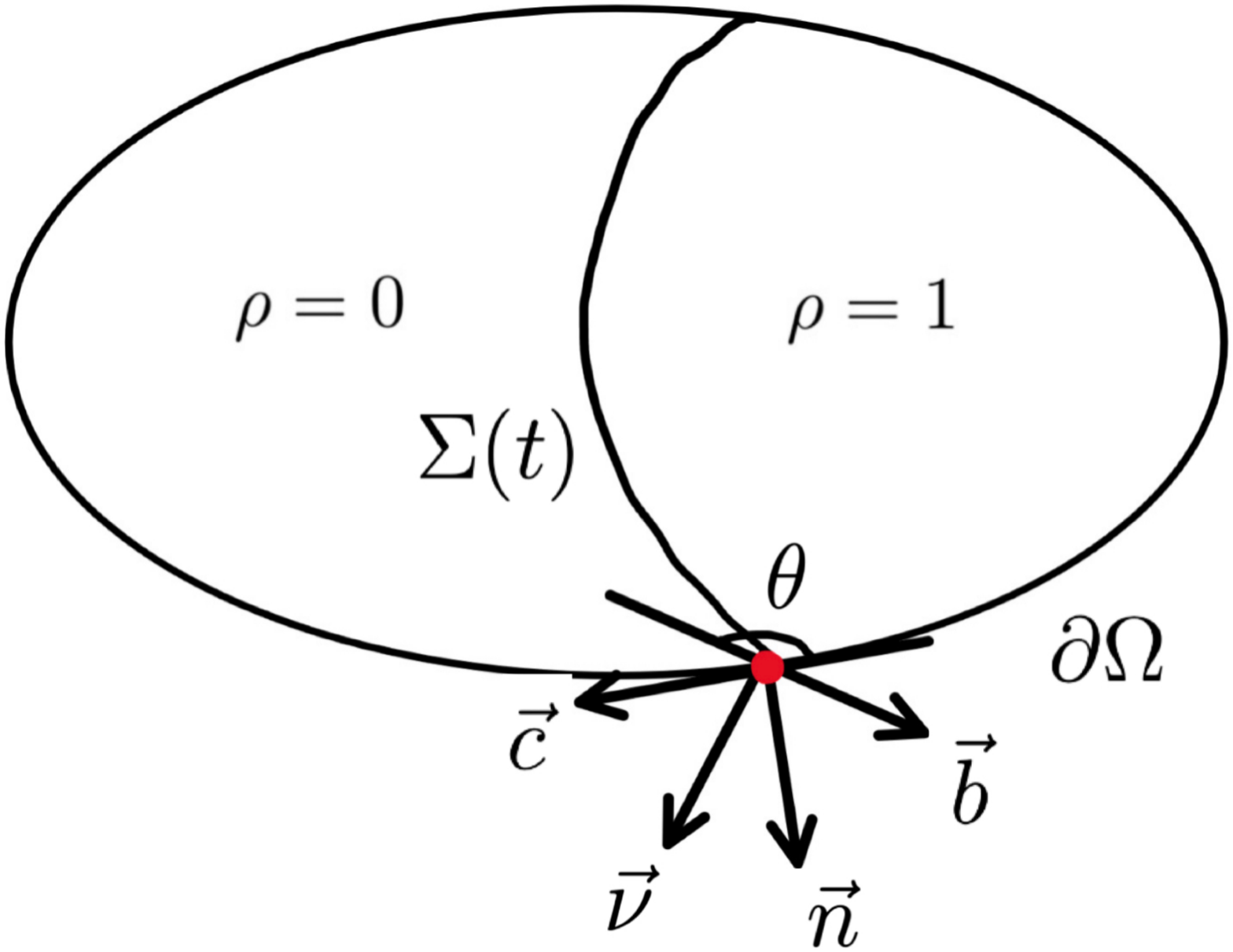}
 	 		\caption{}
 	 		\label{fig:1}
 	 	\end{figure}

%If we integrate by parts  \eqref{eq:weak12b} we thus get $\rho v =\na q$ in $\Omega$ and (recall that $\xi\cdot n=0$ on $\pa \Omega$):
%$$ -\int_{\Sigma} 
% [q] \xi\cdot \nu = 
% \frac1 {2\sigma^{3/2}} \left[  - \int_{\Sigma} \mu \xi\cdot\nu +\int_\Gamma  \gamma   \vec c \cdot \xi -\vec b \cdot \xi\right]
%$$
%where $[q]$ denotes the jump discontinuity of $q$ across $\Sigma$.
Integration by parts in  \eqref{eq:weak12b}  thus reveals that the jump of $q$ across $\Sigma$ must be equal to $\frac{1}{4\sigma^{3/2}}\kappa$. Since $q=0$ in $\{\rho(t)=0\}$, we get $q= \frac{1}{4\sigma^{3/2}}\kappa$ along $\Sigma = \pa E\cap\Omega$. Finally,  the cancellation of the lower dimensional integral requires that the component of the vector $\vec b - \gamma \vec c$ that is tangential to $\pa \Omega$ must vanish.
In particular, we must have $\left[\vec b - \gamma \vec c\right]\cdot \vec c = 0$ and so
$ \vec c \cdot \vec b =  \gamma $,
which gives the contact angle condition 
$$\cos \theta =\gamma.$$

\item  A simple computation show that \eqref{eq:weak12} is equivalent to
$$
\int_0^\infty \int_\Omega  \rho v\cdot \xi  - q\, \div \xi\, dx \, dt= \int_\Omega \eps^{-1}\left(\frac{1}{2\sigma} -\phi\right) \xi \cdot \na \rho
$$
with $q=p+\eps^{-1} \rho \left(\frac{1}{2\sigma} -\phi\right)+\mu\rho$.
Passing to the limit in the right hand side of this equation to derive  \eqref{eq:weak12b}  will be the main result of the second part of the paper (see Proposition \ref{prop:firstvar}) and is at the heart of the relation between the Hele-Shaw model with active potential \eqref{eq:HS2} and the Hele-Shaw model with surface tension \eqref{eq:HSST}.
We will see in particular that the contact angle $\gamma$ depends on the boundary condition for the potential $\phi$ and is given by
$$
\gamma=-\min\left( 1,\frac{2\alpha}{\alpha+\sqrt\sigma \beta} \right).
$$
\end{enumerate}

\medskip

\subsection{Energy}

The proof of the convergence will require some assumptions about the convergence of the energy.
Before stating this assumption, we need to recall a few important facts about the singular part of the energy 
\begin{equation}\label{eq:Jeps}
\J_\eps(\rho) := 
\begin{cases}
 \displaystyle \frac 1 {2\sigma \eps}  \int_\Omega \rho\,(1 -\sigma {\phi^\eps}) \, dx & \mbox{ if $0\leq \rho(x)\leq 1$ a.e. }\\
+\infty &  \mbox{ otherwise}.
\end{cases}
\end{equation}
The properties of $\J_\eps$ when $\eps\ll1$ were studied by two of the authors in  \cite{MW}. The first important result is the following:
%, we recall that the $\Gamma$ convergence of $\J_\eps$ to the perimeter function with an additional term accounting for the two boundary terms in the formula above was the object of the paper.
%More precisely, we proved:
%In that case, we first prove:
\begin{proposition}\cite{MW}\label{prop:4}
Let  $\Omega$ be a bounded open set with $C^{1,\alpha}$ boundary. 
Given a set $E\subset \Omega$ with finite perimeter $P(E,\Omega)<\infty$, we have 
\begin{equation}\label{eq:lims=2}
\lim_{\eps \to 0 } \J_\eps (\chi_E) =  \frac {1}{4 \sigma^{3/2} } \left[ \int_\Omega |\na \chi_E | + 
 \int_{\pa\Omega } \frac{2\alpha}{\alpha+\sqrt \sigma \beta} \chi_E(x) \, d\H^{n-1}(x)\right]  .
\end{equation}
\end{proposition}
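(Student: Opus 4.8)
The plan is to turn the nonlocal functional $\J_\eps(\chi_E)$ into a local, singularly perturbed (Modica--Mortola type) minimization, and then read off the limit from two explicit one-dimensional profile problems. First I would rewrite $\J_\eps$ variationally. Testing the equation $\sigma\phi^\eps-\eps^2\Delta\phi^\eps=\chi_E$ against $w\in H^1(\Omega)$ and integrating the Robin condition by parts shows that $\phi^\eps$ minimizes $a_\eps(w,w)-2\int_\Omega\chi_E\,w$, where $a_\eps(w,w)=\eps^2\int_\Omega|\na w|^2+\sigma\int_\Omega w^2+\frac{\alpha\eps}{\beta}\int_{\pa\Omega}w^2$. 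Completing the square and using $\chi_E^2=\chi_E$ gives the exact identity
\[
\J_\eps(\chi_E)=\frac1{2\eps}\min_{w\in H^1(\Omega)}\Big\{\sigma\int_\Omega\big(w-\tfrac{\chi_E}{\sigma}\big)^2\,dx+\eps^2\int_\Omega|\na w|^2\,dx+\frac{\alpha\eps}{\beta}\int_{\pa\Omega}w^2\,d\H^{n-1}\Big\},
\]
the minimizer being $w=\phi^\eps$ (when $\beta=0$ the boundary term is replaced by the hard Dirichlet constraint $w=0$ on $\pa\Omega$, and the coefficient below becomes $2$). This is now a local energy with a position-dependent potential $\sigma(w-\chi_E/\sigma)^2$ whose wells sit at $0$ in $E^c$ and at $1/\sigma$ in $E$, so that $w$ must transition across $\pa E$ and adjust to the Robin penalty along $\pa\Omega$.

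Next I would record the two constants from the associated one-dimensional problems. The \emph{interior} transition (a half-line/whole-line profile connecting $1/\sigma$ to $0$) has EL equation $\ddot\psi=\sigma(\psi-\chi_{\{s<0\}}/\sigma)$; its explicit solution gives the optimal cost $c_0=\int_\R[\sigma(\psi-\chi/\sigma)^2+\dot\psi^2]\,ds=\tfrac1{2\sigma^{3/2}}$, hence a cost per unit area of $\pa E\cap\Omega$ equal to $c_0/2=\frac1{4\sigma^{3/2}}$. The \emph{boundary layer}, solving $\ddot\psi=\sigma(\psi-1/\sigma)$ on $(0,\infty)$ with $\psi\to1/\sigma$ and the transmission condition $\dot\psi(0)=\tfrac\alpha\beta\psi(0)$ coming from the penalty, yields the minimal value $\tfrac{\alpha}{\sigma^{3/2}(\alpha+\sqrt\sigma\beta)}$, i.e. a cost per unit area of $\pa\Omega\cap E$ equal to $\frac1{4\sigma^{3/2}}\cdot\frac{2\alpha}{\alpha+\sqrt\sigma\beta}$. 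I would also note at this point the compactness input: the maximum principle gives $0\le\phi^\eps\le1/\sigma$, and a crude test function bounds $\mathcal \J_\eps(\chi_E)$ by $O(1)$, whence $\int_\Omega(\phi^\eps-\chi_E/\sigma)^2\le C\eps\to0$, so $\phi^\eps\to\chi_E/\sigma$ in $L^2(\Omega)$.

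For the lower bound $\liminf_\eps\J_\eps(\chi_E)\ge G(E)$ I would exploit that the position-dependent potential dominates a genuine double well: $\sigma(w-\chi_E/\sigma)^2\ge V(w):=\sigma\min\{w^2,(w-1/\sigma)^2\}$ pointwise. In the bulk, AM--GM together with the coarea formula bounds $\frac1{2\eps}\int_\Omega[V(w)+\eps^2|\na w|^2]$ below by $\int_\Omega|\na(\Psi\circ w)|$ with $\Psi'=\sqrt V$; since $\Psi(1/\sigma)-\Psi(0)=\int_0^{1/\sigma}\sqrt{V}=\frac1{4\sigma^{3/2}}$, applying this to $w=\phi^\eps$ and using $\phi^\eps\to\chi_E/\sigma$ with lower semicontinuity of the perimeter produces the interior term $\frac1{4\sigma^{3/2}}\int_\Omega|\na\chi_E|$. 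The boundary term is then recovered by a separate localization in a thin collar of $\pa\Omega\cap E$, slicing in the normal direction and bounding each slice below by the half-line minimum from Step~2.

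Finally, for the upper bound $\limsup_\eps\J_\eps(\chi_E)\le G(E)$ I would use the identity of Step~1 and sub-optimality: it suffices to exhibit a competitor $w_\eps$ with $\frac1{2\eps}\mathcal G_\eps[w_\eps]\to G(E)$. For $E$ with smooth boundary meeting $\pa\Omega$ transversally, I would glue the interior profile (as a function of the signed distance to $\pa E$) to the boundary-layer profile (as a function of the distance to $\pa\Omega$), inserting a corner correction supported in an $\eps$-neighborhood of the contact set $\pa E\cap\pa\Omega$, and check that the corner contributes $o(1)$. Then I would pass to a general finite-perimeter $E$ by approximation. I expect the \emph{main obstacle} to be precisely this boundary/contact analysis: reconciling the interior and boundary contributions at the lower-dimensional contact set (both for the recovery construction and for avoiding double counting in the liminf), and choosing an approximation of $E$ by smooth sets that converges strictly in $BV(\Omega)$ \emph{and} in trace on $\pa\Omega$, so that both $\int_\Omega|\na\chi_{E_k}|\to\int_\Omega|\na\chi_E|$ and $\int_{\pa\Omega}\chi_{E_k}\to\int_{\pa\Omega}\chi_E$ and the weighted limit functional $G$ is continuous along the approximation. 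Controlling the boundary trace is the delicate point that is specific to the bounded-domain setting and is responsible for the exact coefficient $\frac{2\alpha}{\alpha+\sqrt\sigma\beta}$.
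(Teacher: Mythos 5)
Be aware first that the paper does not actually prove Proposition~\ref{prop:4}: it is quoted from the companion paper \cite{MW}, and the only argument supplied here is the scaling $\bar \phi = \sigma\phi$, $\bar\eps = \eps/\sqrt\sigma$, $\bar\beta = \sqrt\sigma\beta$ reducing the general case to $\sigma=1$ (a reduction you could also have used to clean up your constants). Your starting point is nonetheless sound, and in fact coincides with the tools the paper does develop: since $(1-\chi_E)(\sigma\phi)^2+\chi_E(1-\sigma\phi)^2=\sigma^2\bigl(\phi-\chi_E/\sigma\bigr)^2$, your variational identity is exactly the paper's formula \eqref{eq:Jepsd2} restricted to characteristic functions, read through Dirichlet's principle; your function $\Psi$ with $\Psi'=\sqrt V$ is the paper's $F$ up to rescaling, so your interior lower bound is precisely the argument of Appendix~\ref{app:energy}; and your two one-dimensional constants ($\frac{1}{4\sigma^{3/2}}$ per unit area of $\pa E\cap\Omega$, $\frac{1}{4\sigma^{3/2}}\cdot\frac{2\alpha}{\alpha+\sqrt\sigma\beta}$ per unit area of $\pa\Omega\cap E$) are correct.

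The genuine gap is the last step of your upper bound, ``pass to a general finite-perimeter $E$ by approximation.'' That step is legitimate for the $\Gamma$-limsup of Theorem~\ref{thm:4}, where one may choose the approximating sequence and diagonalize to produce a recovery sequence $\rho^\eps\to\chi_E$; but \eqref{eq:lims=2} is a pointwise limit at the \emph{fixed} set $E$, so approximating $E$ by smooth sets $E_k$ only helps if $\J_\eps(\chi_E)\leq \J_\eps(\chi_{E_k})+o_k(1)$ \emph{uniformly in} $\eps$. This fails: the difference $\J_\eps(\chi_E)-\J_\eps(\chi_{E_k})$ contains terms of order $\eps^{-1}\|\chi_E-\chi_{E_k}\|_{L^1(\Omega)}$, which blow up as $\eps\to0$ for each fixed $k$, and strict $\BV$ convergence plus trace convergence of $E_k$ only gives continuity of the \emph{limit} functional, which is not what is needed. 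To reach general finite-perimeter sets you must build competitors $w_\eps$ adapted to the irregular $E$ itself (e.g.\ covering the reduced boundary $\pa^* E$ by $C^1$ pieces via De Giorgi's structure theorem and gluing your profiles along that covering), or argue via the kernel representation of $\phi^\eps$ and blow-up at points of $\pa^*E$ and of $\pa\Omega$. A second, smaller caveat concerns your boundary lower bound: the uncapped coefficient $\frac{2\alpha}{\alpha+\sqrt\sigma\beta}$ (which exceeds $1$ when $\alpha>\sqrt\sigma\beta$) is a feature of the fixed datum $\rho^\eps=\chi_E$; along general sequences $\rho^\eps\to\chi_E$ only the capped constant survives --- this is exactly why $\J_0$ in Theorem~\ref{thm:4} carries the minimum with $1$. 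Your pointwise inequality $\sigma(w-\chi_E/\sigma)^2\geq V(w)$ discards this information, so while it suffices for the interior term, the collar/slicing argument must use that the datum on each normal slice equals $1$ on an initial segment (the fixed trace of $\chi_E$), not merely that $\phi^\eps\to\chi_E/\sigma$, and the collar energy must be kept disjoint from the energy used in the interior bound (apply the Modica--Mortola bound in $\{d(x,\pa\Omega)>\delta\}$ and let $\delta\to0$ last).
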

%While above result is proved in \cite{MW} with a general class of functions $\alpha$ and $\beta$, here we only consider constant $\alpha$ and $\beta$ for simplicity.
%when
%$\alpha(x)$, $\beta(x)$ are bounded, Lipschitz non-negative functions such that $\alpha(x)+\beta(x)$ is bounded below by a positive constant.
We point out that while the result is proved only for $\sigma=1$ in \cite{MW}, but it can be easily extended to $\sigma\neq 1$ by scaling. More precisely with $\bar \phi = \sigma \phi $, $\bar \eps= \eps/\sqrt\sigma$ and $\bar \beta =\sqrt \sigma \beta$, equation  \eqref{eq:phi0} become
$$
\begin{cases}
\bar \phi -\bar \eps^2\Delta  \bar \phi = \rho  &  \mbox{ in } \Omega\\
 \alpha \bar \phi +\bar \beta \bar \eps \nabla \bar \phi \cdot n = 0 & \mbox{ on } \pa\Omega,
\end{cases}
$$
which is the equation studied in \cite{MW}.

\medskip

Above proposition identifies the limit of $\J_\eps (\chi_E)$.  %For Neumann boundary conditions ($\alpha=0$), we find  $ \frac {1}{4 \sigma^{3/2} } P(E,\Omega)$, but for Dirichlet boundary conditions, we get$ \frac {1}{4 \sigma^{3/2} }\left[  P(E,\Omega)+2 \int_{\pa \Omega}{\chi_E(x)d\H^{n-1}(x)}\right]$. 
However, this functional is not lower-semicontinuous when $\frac{2\alpha}{\alpha+\sqrt \sigma \beta}>1$ and cannot be the $\Gamma$-limit of $\J_\eps$. We can in fact  prove:
\begin{theorem}\label{thm:4}
Let $\Omega$  be a bounded open set with $C^{1,\alpha}$ boundary. 
%Assume further that 
%$\alpha(x)$, $\beta(x)$ are bounded, Lipschitz non-negative functions defined on $\pa\Omega$ such that $\alpha(x)+\beta(x) \geq \sigma>0$ and  that
%$\H^{n-2}\left(\partial \left\{\frac{\alpha}{\alpha+\sqrt\sigma \beta}>\frac 1 2 \right\}\right)<\infty$ (this condition ensures that the function $x\mapsto \frac{\alpha(x)}{\alpha(x)+\sqrt\sigma \beta(x)}$ does not oscillate too much around the value $1/2$). 
%\item
The functional $ \J_\eps $ $\Gamma$-converges, when $\eps\to 0$ to 
$$ \J_{0 }(\rho) 
:=
\begin{cases}
\displaystyle  \frac {1}{4 \sigma^{3/2} } \left[  \int_\Omega |\na \rho|  + \int_{\pa \Omega } \min\left( 1  ,\frac{2 \alpha}{\alpha+\sqrt \sigma \beta} \right) \rho \, d\H^{n-1}(x) \right] & \mbox{ if } \rho \in \BV(\Omega;\{0,1\}) \\
\infty & \mbox{ otherwise.}
\end{cases}
$$

\end{theorem}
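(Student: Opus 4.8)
The plan is to verify the two inequalities defining $\Gamma$-convergence with respect to $L^1(\Omega)$ convergence: the lower bound $\liminf_{\eps\to0}\J_\eps(\rho_\eps)\ge \J_0(\rho)$ along every sequence $\rho_\eps\to\rho$ in $L^1$, and, for each $\rho$, the existence of a recovery sequence $\rho_\eps\to\rho$ with $\limsup_{\eps\to0}\J_\eps(\rho_\eps)\le \J_0(\rho)$. Throughout write $c:=\frac{2\alpha}{\alpha+\sqrt\sigma\beta}$, so that Proposition~\ref{prop:4} produces the boundary coefficient $c$ on every finite-perimeter set, while the target $\J_0$ carries the relaxed coefficient $\min(1,c)$; the entire content of the theorem is to account for this discrepancy in the regime $c>1$.

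\emph{Upper bound.} By the density of sets with smooth boundary transversal to $\pa\Omega$ in $\BV(\Omega;\{0,1\})$, together with a diagonal argument, it suffices to build a recovery sequence for such $\rho=\chi_E$ (for $\rho\notin \BV(\Omega;\{0,1\})$ there is nothing to prove since $\J_0=\infty$). When $c\le1$ one has $\min(1,c)=c$, and Proposition~\ref{prop:4} shows that the constant sequence $\rho_\eps\equiv\chi_E$ already satisfies $\lim_\eps\J_\eps(\chi_E)=\J_0(\chi_E)$. When $c>1$ the constant sequence overpays on the wetted boundary, and I would instead detach $E$ from $\pa\Omega$: for $\delta>0$ set $E_\delta:=\{x\in E:\mathrm{dist}(x,\pa\Omega)>\delta\}$, so that $\chi_{E_\delta}\to\chi_E$ in $L^1$ as $\delta\to0$. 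Detaching trades the wetted portion of $\pa\Omega$ (coefficient $c$) for an interior interface at distance $\delta$ (coefficient $1$), while the collar side-walls add a perimeter of order $\delta$ that vanishes; hence $\lim_\eps\J_\eps(\chi_{E_\delta})\to \frac1{4\sigma^{3/2}}\big[\int_\Omega|\na\chi_E|+\int_{\pa\Omega}\chi_E\,d\H^{n-1}\big]=\J_0(\chi_E)$ as $\delta\to0$, and a further diagonal extraction $\delta=\delta(\eps)\to0$ yields the recovery sequence.

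\emph{Lower bound.} Let $\rho_\eps\to\rho$ with $\liminf_\eps\J_\eps(\rho_\eps)<\infty$. The Modica--Mortola structure of $\J_\eps$ underlying Proposition~\ref{prop:4} (established in \cite{MW}) first yields compactness, forcing $\rho\in\BV(\Omega;\{0,1\})$, together with the sharp interior contribution $\liminf_\eps\J_\eps(\rho_\eps)\ge \frac1{4\sigma^{3/2}}\int_\Omega|\na\rho|$ via a blow-up/localization argument. It remains to recover the boundary coefficient $\min(1,c)$. For $c\le1$ this is the sharp boundary bound $\frac1{4\sigma^{3/2}}\big[\int_\Omega|\na\rho|+c\int_{\pa\Omega}\rho\big]$, obtained by localizing near $\pa\Omega$ and comparing with the optimal one-dimensional transition profile of the screened Poisson problem under the Robin condition. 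For $c>1$ I would avoid reproving the boundary profile and instead compare boundary parameters: increasing $\beta$ weakens the absorption and hence increases $\phi^\eps$ (by a comparison argument for $\sigma\phi-\eps^2\Delta\phi=\rho\ge0$), so that $1-\sigma\phi^\eps$ and therefore $\J_\eps$ are nonincreasing in $\beta$. Since $c>1$ forces $\alpha>0$ and $\beta<\beta_0:=\alpha/\sqrt\sigma$, where $\beta_0$ is the critical value giving $c=1$, we get $\J_\eps^{(\alpha,\beta)}\ge\J_\eps^{(\alpha,\beta_0)}$, and passing to the liminf transfers the critical-case boundary coefficient $1=\min(1,c)$.

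The main obstacle is the sharp boundary part of the lower bound in the regime $c\le1$. Unlike the interior, where the optimal profile is one-dimensional and well understood, near $\pa\Omega$ the Robin condition couples the transition layer to the fixed boundary, so one must show that the diffuse interface cannot use the boundary to pay strictly less than $c$ per unit wetted area. Making the localization and blow-up rigorous in the presence of the nonlocal, PDE-defined potential $\phi^\eps$ and of the curved boundary $\pa\Omega$---in particular controlling $\phi^\eps$ in boundary half-cylinders after flattening $\pa\Omega$---is where the real work lies; the monotonicity comparison then upgrades the critical case $c=1$ to the full relaxed statement for $c>1$ at no extra cost.
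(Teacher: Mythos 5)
Your upper bound is sound: the constant recovery sequence when $c:=\frac{2\alpha}{\alpha+\sqrt\sigma\beta}\le 1$ (via Proposition \ref{prop:4}) and the detachment construction $E_\delta=\{x\in E:\mathrm{dist}(x,\pa\Omega)>\delta\}$ when $c>1$ essentially reconstruct the recovery-sequence argument of \cite{MW}, which the paper simply cites for this half. Your monotonicity-in-$\beta$ reduction of the case $c>1$ to the critical case $c=1$ is also a correct and genuinely different idea (the comparison principle for the Robin problem does give that $\phi^\eps$ is pointwise nondecreasing in $\beta$, hence $\J_\eps$ nonincreasing); the paper instead treats Robin conditions directly by combining its Neumann and Dirichlet arguments.

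The gap is the lower bound, and it is twofold. First, you never actually prove the $\Gamma$-liminf inequality in the regime $c\le1$: you describe a localization/blow-up comparison with a one-dimensional Robin profile and then concede that making it rigorous ``is where the real work lies.'' Since your $c>1$ argument only reduces to the critical case $c=1$, the entire liminf half of the theorem rests on this unproven step. Second, your appeal to the Modica--Mortola structure ``established in \cite{MW}'' for compactness and the interior bound does not apply as stated: \cite{MW} treats sequences of \emph{characteristic} functions, whereas the $\Gamma$-liminf here must hold along arbitrary sequences with $0\le\rho^\eps\le1$, and this extension is precisely the new content of the theorem relative to \cite{MW}. Concretely, for characteristic functions one has the pointwise bound $\min(\sigma\phi^\eps,1-\sigma\phi^\eps)^2\le(\rho^\eps-\sigma\phi^\eps)^2$, which fails for general densities (take $\rho^\eps=\sigma\phi^\eps=\tfrac12$); one needs the extra term $\rho^\eps(1-\rho^\eps)$, through the identity $\rho(1-\rho)+(\rho-\sigma\phi)^2=(1-\rho)(\sigma\phi)^2+\rho(1-\sigma\phi)^2$, i.e.\ formula \eqref{eq:Jepsd2}. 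This is exactly what the paper's proof (Proposition \ref{prop:Gamma}) exploits: with $F'(t)=2\min(t,1-t)$ it yields
\begin{equation*}
\frac{1}{2\sigma^{3/2}}\int_\Omega|\na F(\sigma\phi^\eps)|\,dx\le\J_\eps(\rho^\eps),
\end{equation*}
after which $\sigma\phi^\eps\to\rho$ in $L^2$, $F(\sigma\phi^\eps)\to\tfrac12\rho$ in $L^1$, and $\BV$ lower semicontinuity give the interior term, while extending $\phi^\eps$ by zero across $\pa\Omega$ (Dirichlet case, and the Dirichlet ingredient of the Robin case) captures the boundary term --- no blow-up, boundary flattening, or profile analysis is needed. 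To complete your proof you would either have to carry out the boundary blow-up in full for general densities, or adopt this softer $BV$ route.
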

This theorem is proved in \cite{MW} when $\J_\eps$ is restricted to characteristic functions, so we show in Appendix \ref{app:energy} how the proof can be generalized to our more general framework.  This extension requires a new formulation of the energy $\J_\eps$, see \eqref{eq:Jepsd2}.

%\medskip
%\textcolor{blue}{this comment below could also go in my opinion}
%For Dirichlet boundary conditions (and more generally when $\alpha\geq\sqrt\sigma \beta$ on $\pa\Omega$), 
%we have $ \J_{0 } (E) =  \frac {1}{4 \sigma^{3/2} }[P(E) +  \int_{\pa \Omega}{\chi_E(x)d\H^{n-1}(x)}]$ while with Neumann boundary conditions ($\alpha=0$) we find $ \J_{0 } (E) = \frac {1}{4 \sigma^{3/2} } P(E,\Omega)$.

\medskip

This $\Gamma$-convergence result suggests that the solution of the gradient flow associated to the energy $\J_\eps$ (which corresponds to equation \eqref{eq:weak}) converges when $\eps\to0$ to a solution of the gradient flow associated to $\J_0$ which is  formally a Hele-Shaw flow \eqref{eq:HSST}-\eqref{eq:CAC}.
This is indeed the result that we want to make precise in the present paper.

\medskip

\subsection{Main results}
We are now able to state the main result:

\begin{theorem}\label{thm:conv1}
Given an initial data $\rho_{in} = \chi_{E_{in}}\in BV(\Omega;\{0,1\})$, $\mu\geq 0$ and a sequence $\eps_n\to 0$, let $(\rho^{\eps_n},p^{\eps_n})$ be the unique solution 
of \eqref{eq:weak}-\eqref{eq:phi0} given by Theorem \ref{thm:existence}.
Then along a subsequence the density $\rho^{\eps_n}(x,t)$  converges strongly in $L^\infty((0,T);L^1(\Omega))$ to 
$$\rho(x,t)\in L^\infty((0,\infty);\BV(\Omega;\{0,1\}))$$ 
and the modified pressure variable $q^{\eps_n}$ (defined by \eqref{eq:modp}) converges to $ q$ weak-$*$ in $L^2((0,T);(C^s (\Omega))^*)$ for any $s>0$. 
Furthermore, $\rho$ satisfies the continuity equation \eqref{eq:weak11b} for some velocity function $v(x,t)$ as well as the 
energy dissipation property
\begin{equation}\label{eq:energy2}
\J_0(\rho(t)) + \int_0^t \int_\Omega |v|^2 \rho \, dx\, dt \leq \J_0(\rho_{in}).
\end{equation}

Finally, if the following energy convergence assumption holds:
\begin{equation}\label{eq:EA}
\lim_{n\to\infty} \int_0^T \J_{\eps_n}( \rho^{\eps_n} (t)) \, dt = \int_0^T \J_0(\rho(t))\, dt
\end{equation}
then the limit $(\rho, q)$ also satisfies the pressure equation \eqref{eq:weak12b} on $(0,T)$. Thus it follows that $(\rho,p)$ 
is a weak solution of \eqref{eq:HSST}-\eqref{eq:CAC} in the sense of Definition \ref{def:weak2}, with initial condition $\rho_{in}$ and contact angle
$$\gamma=-\min\left( 1,\frac{2\alpha}{\alpha+\sqrt \sigma \beta} \right).$$

\end{theorem}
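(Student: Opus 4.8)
The plan is to run a gradient-flow compactness argument powered by the energy inequality \eqref{eq:energy}, identify the continuity and energy-dissipation structure in the limit, and then — the genuinely hard step — pass to the limit in the pressure equation, which is the content of Proposition \ref{prop:firstvar}. First I would extract uniform-in-$\eps$ bounds. Since $\rho_{in}=\chi_{E_{in}}$ the entropy term vanishes and Proposition \ref{prop:4} gives a finite limit for $\F_{\eps_n}(\rho_{in})=\J_{\eps_n}(\rho_{in})$; combining this with the lower bound on $\J_\eps$ from \cite{MW} and the elementary bounds $-\mu|\Omega|/e\le \mu\int_\Omega\rho\log\rho\le 0$ (valid since $0\le\rho\le1$), the inequality \eqref{eq:energy} yields
$$\sup_{t}\J_{\eps_n}(\rho^{\eps_n}(t))\le C,\qquad \int_0^\infty\!\!\int_\Omega|v^{\eps_n}|^2\rho^{\eps_n}\,dx\,dt\le C,$$
uniformly in $n$.

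Next, compactness. The equi-coercivity underlying Theorem \ref{thm:4} (the Modica--Mortola-type estimate of \cite{MW}) shows that for each fixed $t$ the family $\{\rho^{\eps_n}(t)\}$ is precompact in $L^1(\Omega)$ with every cluster point in $\BV(\Omega;\{0,1\})$. For time regularity I use the continuity equation: $\pa_t\rho^{\eps_n}=-\div(\rho^{\eps_n}v^{\eps_n})$ with $\rho^{\eps_n}v^{\eps_n}$ bounded in $L^2(\Omega\times(0,T))$ (because $\rho\le1$ gives $\int\rho^2|v|^2\le\int\rho|v|^2$), whence the uniform bound $\|\rho^{\eps_n}(t)-\rho^{\eps_n}(s)\|_{H^{-1}}\le C|t-s|^{1/2}$. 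Interpolating the spatial $L^1$-precompactness against this $H^{-1}$-equicontinuity (a generalized Aubin--Lions/Arzel\`a--Ascoli argument) produces a subsequence with $\rho^{\eps_n}\to\rho$ strongly in $L^\infty(0,T;L^1(\Omega))$ and $\rho(t)\in\BV(\Omega;\{0,1\})$ for a.e.\ $t$. Extracting a weak $L^2$ limit $m=\rho v$ of the momenta $\rho^{\eps_n}v^{\eps_n}$, I pass to the limit in \eqref{eq:weak11} to get \eqref{eq:weak11b}, and then in \eqref{eq:energy} to get \eqref{eq:energy2}: on the left, joint convexity gives lower semicontinuity of the kinetic term, the entropy vanishes since $s\log s$ is bounded on $[0,1]$ and $\rho^{\eps_n}\to\rho\in\{0,1\}$, and $\liminf_n\J_{\eps_n}(\rho^{\eps_n}(t))\ge\J_0(\rho(t))$ by the $\Gamma$-$\liminf$ inequality of Theorem \ref{thm:4}; on the right, Proposition \ref{prop:4} identifies the limit of the initial energy as $\J_0(\rho_{in})$.

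The heart of the matter is the pressure equation \eqref{eq:weak12b}. Using the reformulation of comment (6), equation \eqref{eq:weak12} reads
$$\int_0^\infty\!\!\int_\Omega \rho^{\eps_n}v^{\eps_n}\cdot\xi-q^{\eps_n}\div\xi\,dx\,dt=\int_\Omega\eps_n^{-1}\Big(\tfrac1{2\sigma}-\phi^{\eps_n}\Big)\,\xi\cdot\na\rho^{\eps_n}\,,$$
with $q^{\eps_n}$ the modified pressure. The left side passes to the limit once one has a uniform bound on $q^{\eps_n}$ in $L^2(0,T;(C^s(\Omega))^*)$ — which follows from the energy bound, since the right side (a curvature-type quantity) controls the pairing of $q^{\eps_n}$ with $\div\xi$ — allowing extraction of a weak-$*$ limit $q$. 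The right-hand integral is precisely the inner (domain) variation of $\J_{\eps_n}$ along the flow of $\xi$, and the claim of Proposition \ref{prop:firstvar} is that it converges to the inner variation of $\J_0$,
$$-\tfrac1{4\sigma^{3/2}}\int_\Omega[\div\xi-\nu\otimes\nu:D\xi]\,|\na\rho| +\tfrac{\gamma}{4\sigma^{3/2}}\int_{\pa\Omega}[\div\xi-n\otimes n:D\xi]\,\rho\,d\H^{n-1}.$$
Avoiding the heat-kernel convolution representation of \cite{JKM,LO} (unavailable on a bounded domain), my approach is to compute this inner variation directly from the PDE \eqref{eq:phi0}: differentiating $\J_{\eps_n}$ under the pushforward by the flow of $\xi$ produces a divergence-form energy--momentum (Eshelby) tensor built from $\eps^2|\na\phi^{\eps_n}|^2$ and $\sigma(\phi^{\eps_n})^2$, together with boundary terms coming from the Robin condition. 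I would then invoke the energy-convergence hypothesis \eqref{eq:EA} — which forces the diffuse energy to concentrate on $\Sigma(t)$ with the correct equipartition and no loss of mass — to pass to the limit in this tensor, recovering the first variation of the perimeter with weight $\tfrac1{4\sigma^{3/2}}$ from the one-dimensional optimal profile of \cite{MW}, and the contact-angle weight $\gamma$ from the limiting boundary contribution.

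The main obstacle is exactly this last convergence of the stress tensor. The interior part is a quantitative concentration argument ruling out energy loss away from $\Sigma(t)$, but the novel difficulty is the behavior along the fixed boundary $\pa\Omega$: one must match the interior transition profile to the Robin condition $\alpha\phi+\beta\eps\,\na\phi\cdot n=0$ to extract the correct boundary term and thereby the contact angle $\gamma=-\min\!\big(1,\tfrac{2\alpha}{\alpha+\sqrt\sigma\beta}\big)$. This boundary matching — in particular the detachment/saturation dichotomy responsible for the $\min(1,\cdot)$ in $\gamma$ — cannot be read off from symmetry of a convolution kernel as in the torus or $\R^d$ setting, and is where the PDE-based analysis replacing \cite{LO,JKM} must do genuinely new work.
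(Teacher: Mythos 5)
Most of your plan mirrors the paper's actual proof: the a priori bounds (Lemma \ref{lem:unifestimates}), the $H^{-1}$ time-equicontinuity, the duality argument via an auxiliary Neumann problem to bound $q^{\eps_n}$ in $L^2((0,T);(C^s(\Omega))^*)$, and, above all, the treatment of the singular term as the inner variation of $\J_{\eps_n}$, computed exactly from the PDE \eqref{eq:phi0} (your ``Eshelby tensor'' identity is precisely Lemma \ref{lem:firstvar}) and passed to the limit by an equipartition argument under the hypothesis \eqref{eq:EA} (Proposition \ref{prop:firstvar}). The genuine gap is in your compactness step, i.e.\ in what the paper proves as Proposition \ref{prop:rhostrong}(iii). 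You invoke ``the equi-coercivity underlying Theorem \ref{thm:4}'' to get, at each fixed $t$, $L^1$-precompactness of $\{\rho^{\eps_n}(t)\}$, and then interpolate against the $H^{-1}$-equicontinuity. Neither half stands as written. The coercivity and compactness results of \cite{MW} behind Theorem \ref{thm:4} are proved for characteristic functions, whereas $\rho^{\eps_n}(t)$ is never a characteristic function when $\mu>0$; and the bound $\J_{\eps_n}(\rho^{\eps_n}(t))\le C$ gives no uniform $\BV$ bound on $\rho^{\eps_n}$ itself, so there is no space compactly embedded in $L^1$ in which you control $\rho^{\eps_n}$ uniformly. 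Moreover, pointwise-in-$t$ precompactness is genuinely too weak for the Aubin--Lions conclusion you want: one can build sequences (high-frequency spatial profiles activated on disjoint, shrinking time intervals) that are precompact in $L^1$ at every fixed $t$, uniformly $1/2$-H\"older continuous in $H^{-1}$, converge to $0$ uniformly in $H^{-1}$, and yet do not converge in $L^\infty((0,T);L^1(\Omega))$. A bound that is uniform in both $n$ and $t$ in a compactly embedded space is indispensable.

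The missing idea --- and the paper's key new ingredient at this stage --- is to route the compactness through the potential rather than the density. The reformulation \eqref{eq:Jepsd2} of the energy yields the inequality \eqref{eq:BVphi}: with $F'(t)=2\min(t,1-t)$, one has $\frac{1}{2\sigma^{3/2}}\int_\Omega|\na F(\sigma\phi^{\eps_n})|\,dx\le \J_{\eps_n}(\rho^{\eps_n})$, valid for all densities with $0\le\rho^{\eps_n}\le1$ (not only characteristic functions). Hence $\psi^n:=2F(\sigma\phi^{\eps_n})$ is bounded in $L^\infty((0,T);\BV(\Omega))$; in addition the energy controls $\|\rho^{\eps_n}(t)-\sigma\phi^{\eps_n}(t)\|^2_{L^2}\le C\eps_n$ and $|\{\delta\le\rho^{\eps_n}(t)\le1-\delta\}|\le C\eps_n/(\delta(1-\delta))$, which together give $\|\rho^{\eps_n}(t)-\psi^n(t)\|_{L^2}\le C\eps_n^{1/4}$ uniformly in $t$ (estimates \eqref{eq:ghj1}--\eqref{eq:ghj2}). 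The Lions--Aubin lemma (Lemma \ref{lem:LA}) then applies to $\psi^n$, and the uniform $L^1$ convergence, the $\BV$ regularity of the limit, and the fact that the limit takes values in $\{0,1\}$ all transfer to $\rho^{\eps_n}$. Everything downstream in your plan --- in particular the use of the first-variation convergence (Corollary \ref{cor:1}), which takes the strong $L^1$ convergence of $\rho^{\eps_n}$ as a hypothesis --- is conditional on this repair.
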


The result also holds if we consider a sequence of initial data $\rho_{in} ^\eps= \chi_{E^\eps_{in}}$ bounded in $BV(\Omega)$, converging strongly to $\rho_{in} = \chi_{E_{in}}$  in $L^1$ and satisfying $\lim  \J_{\eps_n}( \rho^{\eps}_{in})  \to \J_0(\rho_{in})$.
The existence of such a sequence, for any  finite perimeter set $E_{in}$, is proved in \cite{MW} (Proposition 5.3) as part of the $\Gamma $ convergence result.

\medskip

We note that Theorem \ref{thm:conv1} is a conditional result, since it requires the energy convergence assumption \eqref{eq:EA}. 
The analysis  of \cite{MW} implies that we always have
$$
\liminf_{n\to\infty} \int_0^T \J_{\eps_n}( \rho^{\eps_n}(t) ) \, dt\geq \int_0^T \J_0(\rho(t))\, dt,
$$
so \eqref{eq:EA} ensures that there is no  loss of boundary between phases in the limit.
This assumption is rather natural and is similar to the one required for instance in  \cite{LS, LO, JKM}. It is likely that by proceeding as in \cite{CL} one could also obtain a weaker notion of solutions using the theory of varifolds without this assumption, but we do not pursue this direction here.

%Finally, observe that the convergence of the density is slightly better in this case than in Theorem \ref{thm:conv}.
%This is due to the fact that we recover estimates on the  total variation  of $\rho$ via the energy.
%The behavior of the energy when $\eps\ll1$ makes it also quite easy to show that the limiting density is a characteristic function in this case.

\subsection{Outline of the paper}
We begin with deriving two alternative formulas for the energy $\J_\eps$ in the next section, which play a crucial role in our analysis. Section \ref{sec:apiori} collects the main a priori estimates for the $\e$-solutions.
The proof of Theorem~\ref{thm:conv1} is then split between sections 4 and 5.
In Section \ref{sec:conv}, we prove Proposition \ref{prop:rhostrong} which gives the first part of the theorem, namely the strong convergence in $L^1$ of the density toward a characteristic function which satisfies the continuity equation \eqref{eq:weak11b} and the energy inequality  \eqref{eq:energy2}.
 Section \ref{sec:lim} completes the proof of Theorem \ref{thm:conv1} by deriving 
equation \eqref{eq:weak12b} under condition \eqref{eq:EA}. The main step is Proposition \ref{prop:firstvar} which shows that the convergence of the energy \eqref{eq:EA} implies the convergence of the first variation. In the last section, we briefly recall the construction of the JKO scheme used in \cite{KMW} to prove the existence of weak solutions to \eqref{eq:weak}-\eqref{eq:phi0} (Theorem \ref{thm:existence}) and  we state a convergence result similar to Theorem \ref{thm:conv1} for a discrete-time approximation: such a result is of independent interest for numerical applications.

\medskip

\section{Alternate formulas for $\J_\eps$}\label{sec:J}
A crucial tool in our analysis will be a couple of alternate formula for the energy $\J_\eps$.
We recall that the total energy of the model, $\F_\eps$, is given by
$$ \F_\eps(\rho) = \mu\int_\Omega \rho \log\rho\, dx + \J_\eps(\rho),$$ 
where $\J_\eps$ is defined by \eqref{eq:Jeps} and  plays key role in the analysis 
when $\eps\ll1$.
For $\rho$ satisfying the constraint $0\leq \rho\leq1$, we have
$$
\J_\eps (\rho) = 
 \frac 1 {2\sigma \eps}  \int_\Omega \rho\,(1 -\sigma \phi) \, dx 
 = \frac 1 {2\sigma \eps}  \int_\Omega \rho(x) - \sigma \int_\Omega G_\eps(x,y) \rho(y) \, dy  \, dx $$
 for some kernel $G_\eps$. 
A similar energy functional is used in \cite{LO,JKM} with $G_\eps$ is the heat kernel in $\R^d$.
However, we will rely on some different formulations for $J_\eps$ which  make use of the particular equation solved by the function $\phi^\eps$ in our model:
First, we write 
\begin{align*}
\J_\eps (\rho) 
% & = \frac{1}{2\sigma \eps} \int_\Omega \rho - \sigma \phi \rho \, dx \\
 & = \frac{1}{2\sigma \eps} \int_\Omega (\rho-\rho^2) +(\rho^2 - 2\sigma \phi \rho + (\sigma\phi)^2) -  (\sigma\phi)^2  + \sigma \phi \rho \, dx \\
 & = \frac{1}{2\sigma \eps} \int_\Omega \rho(1-\rho)- ( \rho - \sigma \phi )^2 -  \sigma\phi ( \sigma \phi - \rho)  \, dx 
\end{align*}
and using  equation \eqref{eq:phi0} for $\phi$ implies
\begin{align*}
 \J_\eps (\rho)  & = \frac{1}{2\sigma \eps} \int_\Omega\rho(1-\rho)+ ( \rho - \sigma \phi )^2 -  \sigma\phi ( \eps^2 \Delta \phi)  \, dx \nonumber \\
 & = \frac{1}{2\sigma \eps}  \int _{\Omega}\rho(1-\rho) \, dx +  \frac{1}{2\sigma \eps}  \int _{\Omega}(\rho-\sigma \phi)^2 \, dx  + 
\frac{ \eps}{2}  \int_\Omega |\na \phi| ^2\, dx + \frac{1}{2}  \int_{\pa \Omega}\frac{\alpha}{\beta} |\phi|^2 \, d\H^{n-1}(x) 
 \end{align*} 
 when $\beta\neq 0$ and 
 \begin{align*}
 \J_\eps (\rho)  = \frac{1}{2\sigma \eps}  \int _{\Omega}\rho(1-\rho) \, dx +   \frac{1}{2\sigma \eps}  \int _{\Omega}(\rho-\sigma \phi)^2 \, dx  + 
\frac{ \eps}{2}  \int_\Omega |\na \phi| ^2\, dx    
 \end{align*}
when $\beta=0$.
Alternatively, we can write the more symmetric formula (for any $\alpha,\; \beta\geq 0$ with $\alpha+\beta>0$):
\begin{align}
 \J_\eps (\rho)  & =  \frac{1}{2\sigma \eps}\int_\Omega  \rho(1-\rho)\, dx
+  \frac{1}{2\sigma \eps}  \int _{\Omega}(\rho-\sigma \phi)^2 \, dx  + 
\frac{ \eps}{2}  \int_\Omega |\na \phi| ^2\, dx\nonumber \\
 & \qquad + \frac{\eps^2}{2}  \int_{\pa \Omega}\frac{\beta}{\alpha+\beta} |\na \phi\cdot n|^2 \, d\H^{n-1}(x) +
\frac{1}{2}  \int_{\pa \Omega}\frac{\alpha}{\alpha+\beta} |\phi|^2 \, d\H^{n-1}(x) .\label{eq:Jepsd}
 \end{align}
 This formula played a key role in the proof of Proposition \ref{prop:4} and Theorem \ref{thm:4} in \cite{MW}.
 Thanks to the constraint $0\leq \rho\leq 1$, all the terms in this formula are non-negative (without the constraint, the first term will favor values of $\rho$ larger than $1$).
Furthermore, in the regime $\eps\ll1$, the first term will be bounded only for characteristic functions. This observation will be crucial when proving that the limiting density is a characteristic function (even though $\rho^\eps$ may not be).
We also note  that the scaling of the following two terms is consistent with the scaling of the classical Modica-Mortola regularization of the perimeter functional.

 \medskip

Our analysis will also require a slight variation of this formula:
 we can write
$$ 
 \rho(1-\rho)+ (\rho-\sigma \phi)^2 = \rho +\sigma^2\phi^2 - 2\rho\sigma \phi
  = (1-\rho) (\sigma\phi)^2 + \rho(1-\sigma \phi)^2$$
leading to the formula
  \begin{align}
 \J_\eps (\rho)  & =  \frac{1}{2\sigma \eps}\int_\Omega   (1-\rho) (\sigma\phi)^2 + \rho(1-\sigma \phi)^2\, dx
+\frac{ \eps}{2}  \int_\Omega |\na \phi| ^2\, dx\nonumber \\
 & \qquad + \frac{\eps^2}{2}  \int_{\pa \Omega}\frac{\beta}{\alpha+\beta} |\na \phi\cdot n|^2 \, d\H^{n-1}(x) +
\frac{1}{2}  \int_{\pa \Omega}\frac{\alpha}{\alpha+\beta} |\phi|^2 \, d\H^{n-1}(x) .\label{eq:Jepsd2}
 \end{align}
Here also we note that all the terms are non-negative when $0\leq \rho\leq1$. This formula will in particular be crucial in proving the strong convergence of $\rho^\eps$ and when deriving the pressure equation \eqref{eq:weak12b} (see Section \ref{sec:lim}).

\medskip

\section{A priori estimates}\label{sec:apiori}
We now derive the a priori estimates that will be used to prove the convergence of $\rho^\eps$.
We have:
%The following a priori estimates for $\rho^\eps$ follow from the  energy inequality \eqref{eq:energy}:
\begin{lemma}\label{lem:unifestimates}
Let $\rho_{in} \in \BV(\Omega;\{0,1\})$
and  $(\rho^{\eps},p^{\eps})$ be the unique solution 
of \eqref{eq:weak}-\eqref{eq:phi0} given by Theorem~\ref{thm:existence}.  
There exists a constant $C$ depending only on $\int_\Omega |\na \rho_{in}|$ (and in particular independent of $\eps$) such that for all $\eps>0$ we have:
%Let $(\rho^{\eps,\tau},v^{\eps,\tau},E^{\eps,\tau})$ and $(\widetilde{\rho}^{\eps,\tau},\widetilde{v}^{\eps,\tau},\widetilde{E}^{\eps,\tau})$ be the previously constructed piecewise constant and continuous in time interpolations, respectively. Then there exists $C > 0$ independent of $\tau>0$ and $\eps$ such
%that
\item[(i)]  $\J_{\eps}(\rho^\eps(t)) \leq C$ for all $t>0$.
\item[(ii)]  $\int_0^\infty\int_\Omega |v^{\eps} |^2 \rho^\eps \, dx\, dt \leq C$ and  $\| E^{\eps}\|_{L^2(\Omega\times(0,\infty))}\leq C $ %$E^{\eps,\tau} $ and $\widetilde{E}^{\eps,\tau} $ are uniformly bounded in $\mathcal{M}^d([0,T]\times \Omega)$ 
\item[(iii)]  $\| \rho^{\eps}(t)-\rho^{\eps}(s)\| _{H^{-1}(\Omega)} \leq C\sqrt{t-s}$
 for any $0\leq s\leq t$.
%\item[(iii)] $W_2(\widetilde{\rho}^{\eps,\tau}(t),\widetilde{\rho}^{\eps,\tau}(s))\leq C\sqrt{t-s}$ for any $0\leq s\leq t\leq T$ and $\| \widetilde v^{\eps,\tau} \|_{L^2_{\widetilde\rho^{\eps,\tau}}}\leq C$
\end{lemma}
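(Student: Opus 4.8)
The plan is to read off all three bounds from the energy inequality \eqref{eq:energy} of Theorem~\ref{thm:existence}, the only nontrivial ingredient being a bound on the initial energy that is uniform in $\eps$. Since $\rho_{in}=\chi_{E_{in}}$ takes values in $\{0,1\}$ almost everywhere, the entropy integrand vanishes ($\rho_{in}\log\rho_{in}=0$), so $\F_\eps(\rho_{in})=\J_\eps(\rho_{in})$; and $\J_\eps(\chi_{E_{in}})$ is bounded by a constant depending only on $P(E_{in},\Omega)=\int_\Omega|\na\rho_{in}|$ and independent of $\eps$. This last fact is exactly the content of \cite{MW} recalled before Proposition~\ref{prop:4} (the uniform upper bound underlying the limit computed there), and it is the one genuinely nontrivial input; everything else is a routine consequence of the gradient-flow structure.

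To prove (i) I would use the two-sided control of the entropy under the constraint $0\le\rho^\eps\le1$: since $s\log s\in[-1/e,0]$ for $s\in[0,1]$, we have $-\mu|\Omega|/e\le\mu\int_\Omega\rho^\eps\log\rho^\eps\,dx\le0$. Moreover the alternate formula \eqref{eq:Jepsd2} displays $\J_\eps$ as a sum of manifestly nonnegative terms when $0\le\rho^\eps\le1$, so $\J_\eps(\rho^\eps(t))\ge0$ and hence $\F_\eps(\rho^\eps(t))\ge-\mu|\Omega|/e$. Feeding these into \eqref{eq:energy} gives
\[
\J_\eps(\rho^\eps(t))=\F_\eps(\rho^\eps(t))-\mu\int_\Omega\rho^\eps\log\rho^\eps\,dx\le\F_\eps(\rho_{in})+\frac{\mu|\Omega|}{e}\le C .
\]

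For (ii), the same lower bound $\F_\eps(\rho^\eps(t))\ge-\mu|\Omega|/e$ in \eqref{eq:energy} yields $\int_0^t\int_\Omega|v^\eps|^2\rho^\eps\le\F_\eps(\rho_{in})+\mu|\Omega|/e\le C$ uniformly in $t$, and letting $t\to\infty$ gives the dissipation bound. Writing $E^\eps=\rho^\eps v^\eps$ for the flux in the continuity equation \eqref{eq:weak11}, the constraint $\rho^\eps\le1$ gives $|E^\eps|^2=(\rho^\eps)^2|v^\eps|^2\le\rho^\eps|v^\eps|^2$, so $\|E^\eps\|_{L^2(\Omega\times(0,\infty))}^2\le\int_0^\infty\int_\Omega\rho^\eps|v^\eps|^2\le C$.

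Finally, for (iii) I would test \eqref{eq:weak11} against a time-independent $\psi\in H^1(\Omega)$ on the interval $(s,t)$, obtaining $\langle\rho^\eps(t)-\rho^\eps(s),\psi\rangle=\int_s^t\int_\Omega\rho^\eps v^\eps\cdot\na\psi\,dx\,d\tau$; Cauchy--Schwarz in space-time together with $\rho^\eps\le1$ bounds the right-hand side by $(\int_s^t\int_\Omega\rho^\eps|v^\eps|^2)^{1/2}(t-s)^{1/2}\|\na\psi\|_{L^2(\Omega)}\le C^{1/2}\sqrt{t-s}\,\|\psi\|_{H^1(\Omega)}$, and taking the supremum over $\|\psi\|_{H^1(\Omega)}\le1$ gives (iii) (the constant part of $\psi$ is harmless since $\int_\Omega\rho^\eps$ is conserved). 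The only points requiring care are the justification of using time-localized and general $H^1$ test functions in \eqref{eq:weak11}, which is standard given the $C^{1/2}(0,\infty;H^{-1}(\Omega))$ regularity recorded in Definition~\ref{def:weak}, and the uniform initial-energy bound flagged above, which is the genuine crux of the argument.
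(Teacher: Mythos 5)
Your proposal is correct and follows essentially the same route as the paper's proof: the uniform bound $\J_\eps(\chi_{E_{in}})\leq C$ from Proposition~\ref{prop:4} (the \cite{MW} result), the energy inequality \eqref{eq:energy} combined with the two-sided entropy bound for (i) and (ii), the pointwise inequality $|E^\eps|^2\leq\rho^\eps|v^\eps|^2$ from $\rho^\eps\leq 1$, and the Cauchy--Schwarz duality argument against $H^1$ test functions for (iii). If anything, you are slightly more careful than the paper in (ii), where you retain the lower bound $\F_\eps(\rho^\eps(t))\geq-\mu|\Omega|/e$ before discarding that term, whereas the paper drops it implicitly.
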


\begin{proof}
We recall that
$$ \F_\eps(\rho) = \mu\int_\Omega \rho \log\rho\, dx + \J_\eps(\rho),$$ 
where (since $0\leq \rho\leq 1$)  $-C\leq \rho\log\rho \leq 0$.
The energy inequality \eqref{eq:energy}
thus  implies
$$ \F_\eps(\rho^{\eps}(t)) \leq \F_\eps(\rho_{in}) \leq \J_\eps(\rho_{in}) \quad \forall t>0.$$
Using Proposition \ref{prop:4}, we see that when
$\rho_{in} = \chi_{E_{in}} \in BV(\Omega;\{0,1\})$, we have  $  \J_\eps(\rho_{in}) \leq C $ for some constant $C$ independent on $\eps$. 
We deduce 
\begin{equation}\label{eq:Jbd}
\J_\eps(\rho^{\eps}(t)) \leq \J_\eps(\rho_{in})-   \mu\int_\Omega \rho \log\rho\, dx \leq C.
\end{equation}

\medskip

The energy  inequality  also gives 
$$\int_0^\infty \int_\Omega |v^\eps |^2 \rho^\eps\, dx\,dt \leq  \F_\eps(\rho_{in}) \leq \J_\eps(\rho_{in})$$
and since $\rho^\eps\leq 1$ (ii) follows immediately.

\medskip

Finally, for a given test function $\psi \in H^1(\Omega)$, the continuity equation \eqref{eq:weak11} implies
$$
\int_\Omega \rho^\eps(x,t) \psi(x)\, dx - \int_\Omega \rho^\eps(x,s) \psi(x)\, dx = \int_s^t \int_\Omega \rho^\eps v^\eps \cdot\na \psi\, dx\, d\tau
$$ 
and so (since $\rho^\eps\leq 1$):
\begin{align*}
\left| 
\int_\Omega \big(\rho^\eps(x,t)- \rho^\eps (x,s) \big)\psi(x)\, dx \right|
& \leq  \left( \int_s^t \int_\Omega |v^\eps |^2 d\rho^\eps \right)^{1/2}\left(\int_s^t \int_\Omega \rho^\eps |\na \psi|^2 \, dx\, d\tau \right)^{1/2}\\
& \leq   \| \psi\|_{H^1(\Omega)}\left( \int_0^t \int_\Omega |v^\eps |^2 d\rho^\eps \right)^{1/2}\left(t-s\right)^{1/2}
\end{align*}
and (iii) now follows from (ii).
\end{proof}

We also need some estimates on $\phi^\eps$, 
solution of \eqref{eq:phi0}. 
%We recall that this solution can be expressed as
%\begin{equation}\label{eq:phiG}
% \phi (x)=\int_\Omega G_\eps(x,y) \rho(y)\, dy
% \end{equation}
%for some Green kernel $G_\eps(x,y): \Omega\times\Omega\to\R^d$.
%Importantly, the Green kernel is not of the form $G(|x-y|)$. But since the equation \eqref{eq:phi0}
%is self-adjoint, we have
%$$
%G_\eps(x,y)=G_\eps(y,x), \qquad \na_x G_\eps(x,y)=-\na_y G_\eps(y,x)
%$$
The maximum principle applied to \eqref{eq:phi0} immediately gives 
\begin{equation}\label{eq:phimax} 
0\leq  \phi(x)\leq 1/\sigma \mbox{ in } \Omega
\end{equation}
and multiplying \eqref{eq:phi0} by $\phi$ and integrating leads to the estimate
\begin{equation}\label{eq:phiH1}
\sigma \| \phi \|_{L^2(\Omega)}^2+ \eps^2 \|\na \phi\|_{L^2(\Omega)}^2 \leq 1/\sigma.
\end{equation}
\medskip

\section{Strong Convergence of $\rho^{\eps}$ and continuity equation.}\label{sec:conv}

The main result of this section is the following proposition which proves the first part of Theorem \ref{thm:conv1}:
\begin{proposition}\label{prop:rhostrong}
Let  $\rho_{in}(x) = \chi_{E_{in}} \in BV(\Omega;\{0,1\})$ and $\rho^\eps(x,t)$ the unique solution 
of \eqref{eq:weak}-\eqref{eq:phi0} given by Theorem \ref{thm:existence}. 
Consider a sequence such that $\eps_n\to 0$. The followings hold:
\item[(i)] There exists a subsequence (still denoted $\eps_n$) along which $\rho^{\eps_n}(t)$ converges uniformly with respect to $t$ in $H^{-1}(\Omega)$  to $\rho(t)$ and $E^{\eps_n}$ converges weakly in $L^2(\Omega\times(0,\infty))$ to $E$.
\item[(ii)] 
There exists $v\in (L^2(\Omega\times(0,\infty),d\rho))^d$ such that $E=\rho v$ and the continuity equation \eqref{eq:weak11b} holds.
\item[(iii)] Up to another subsequence, $\rho^{\eps_n}(t)$ converges to $\rho(t)$ strongly in $L^1(\Omega)$, uniformly in $t$. Furthermore, for all $t>0$ we have 
$$\rho(t)\in BV(\Omega;\{0,1\})$$
(that is $\rho(t)$ is the characteristic function of a set of finite perimeter)
and  the energy inequality \eqref{eq:energy2} holds.
\end{proposition}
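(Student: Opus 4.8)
The plan is to combine the uniform estimates of Lemma~\ref{lem:unifestimates} with the algebraic identities \eqref{eq:Jepsd}--\eqref{eq:Jepsd2} for the energy and the $\Gamma$-convergence of Theorem~\ref{thm:4}, treating the three items in order. For \emph{part (i)}, Lemma~\ref{lem:unifestimates}(iii) says the curves $t\mapsto\rho^\eps(t)$ are equi-H\"older continuous (exponent $1/2$) with values in $H^{-1}(\Omega)$, uniformly in $\eps$. For each fixed $t$ the family $\{\rho^\eps(t)\}$ is bounded in $L^\infty(\Omega)\subset L^2(\Omega)$, and since $L^2(\Omega)$ embeds compactly into $H^{-1}(\Omega)$ by Rellich, $\{\rho^\eps(t)\}$ is precompact in $H^{-1}(\Omega)$. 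A vector-valued Arzel\`a--Ascoli theorem on $C([0,T];H^{-1}(\Omega))$, followed by a diagonal extraction as $T\to\infty$, produces a subsequence along which $\rho^{\eps_n}(t)\to\rho(t)$ in $H^{-1}(\Omega)$ uniformly on compact time intervals, with $\rho\in C^{1/2}([0,\infty);H^{-1}(\Omega))$. The weak convergence $E^{\eps_n}\rightharpoonup E$ in $L^2(\Omega\times(0,\infty))$ is then immediate from the bound $\|E^\eps\|_{L^2}\le C$ in Lemma~\ref{lem:unifestimates}(ii).

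For \emph{part (ii)}, I would pass to the limit in the weak continuity equation \eqref{eq:weak11}, rewritten as $\int_\Omega\rho_{in}\zeta(\cdot,0)+\int_0^\infty\int_\Omega\rho^\eps\,\pa_t\zeta+E^\eps\cdot\na\zeta=0$. The term $\int\int\rho^\eps\,\pa_t\zeta\to\int\int\rho\,\pa_t\zeta$ using the $H^{-1}$ convergence together with the uniform $L^\infty$ bound, while $\int\int E^\eps\cdot\na\zeta\to\int\int E\cdot\na\zeta$ by the weak $L^2$ convergence; this yields \eqref{eq:weak11b} with flux $E$. To write $E=\rho v$ with $v\in L^2(d\rho)$ I would invoke the joint convexity and weak-$*$ lower semicontinuity of the Benamou--Brenier functional $(\rho,E)\mapsto\int|E|^2/\rho$. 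Since $\int|E^\eps|^2/\rho^\eps=\int|v^\eps|^2\rho^\eps\le C$ by Lemma~\ref{lem:unifestimates}(ii), the limit satisfies $\int|E|^2/\rho\le\liminf\int|v^\eps|^2\rho^\eps<\infty$, forcing $E\ll\rho$, i.e. $E=\rho v$ with $\int|v|^2\,d\rho\le\liminf\int|v^\eps|^2\rho^\eps$.

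\emph{Part (iii)} is where the real work lies. From \eqref{eq:Jepsd} and Lemma~\ref{lem:unifestimates}(i) all summands are nonnegative and bounded, so $\frac1{2\sigma\eps}\int(\rho^\eps-\sigma\phi^\eps)^2\le C$, giving $\rho^\eps-\sigma\phi^\eps\to0$ in $L^2(\Omega\times(0,T))$; meanwhile \eqref{eq:Jepsd2} exhibits $\frac{\eps}{2}\int|\na\phi^\eps|^2$ together with the double-well term $\frac1{2\sigma\eps}\int(1-\rho^\eps)(\sigma\phi^\eps)^2+\rho^\eps(1-\sigma\phi^\eps)^2$, which is precisely a Modica--Mortola energy for $\psi^\eps:=\sigma\phi^\eps$. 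The compactness half of the $\Gamma$-convergence established in \cite{MW} (Theorem~\ref{thm:4}) then shows that, for each $t$, $\{\psi^\eps(t)\}$ is precompact in $L^1(\Omega)$ with every limit in $\BV(\Omega;\{0,1\})$; since $\rho^\eps-\psi^\eps\to0$ in $L^2$, the same conclusion transfers to $\rho^{\eps_n}(t)$. As this $L^1$ limit must agree with the $H^{-1}$ limit $\rho(t)$ of part (i), every subsequence has a further subsequence converging in $L^1$ to $\rho(t)$, so the whole sequence converges, $\rho^{\eps_n}(t)\to\rho(t)$ in $L^1(\Omega)$ with $\rho(t)\in\BV(\Omega;\{0,1\})$. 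The strengthening to convergence uniform in $t$ follows by combining this with the $H^{-1}$-equicontinuity of Lemma~\ref{lem:unifestimates}(iii) and the uniform $\BV$-type bounds furnished by the Modica--Mortola energy, via interpolation. Finally, for the energy inequality \eqref{eq:energy2} I would take $\liminf_n$ in \eqref{eq:energy}, using superadditivity of $\liminf$: the entropy $\mu\int\rho^\eps(t)\log\rho^\eps(t)\to0$ because $\rho^\eps(t)\to\rho(t)\in\{0,1\}$ in $L^1$ with bounded integrand; $\liminf\J_{\eps_n}(\rho^{\eps_n}(t))\ge\J_0(\rho(t))$ by the $\Gamma$-$\liminf$ inequality of Theorem~\ref{thm:4}; $\liminf\int_0^t\int|v^\eps|^2\rho^\eps\ge\int_0^t\int|v|^2\rho$ by the Benamou--Brenier lower semicontinuity of part (ii); and the right-hand side converges, $\J_{\eps_n}(\rho_{in})\to\J_0(\rho_{in})$, which holds for well-prepared initial data (a recovery sequence as in Proposition~5.3 of \cite{MW}).

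The crux, and the step I expect to be the main obstacle, is the strong $L^1$ convergence together with the identification of a $\{0,1\}$-valued limit in part (iii). Weak $L^2$ (equivalently $H^{-1}$) convergence alone cannot exclude fine-scale spatial oscillation of $\rho^\eps$ between the two phases, and a naive norm-convergence argument only \emph{reduces} the problem to, but does not prove, $\rho(1-\rho)=0$ a.e. The decisive ingredient is the Modica--Mortola structure of $\J_\eps$ made visible in \eqref{eq:Jepsd2}, combined with the coupling estimate $\rho^\eps\approx\sigma\phi^\eps$ coming from \eqref{eq:Jepsd}, which is what allows one to import the compactness of \cite{MW} from $\phi^\eps$ to $\rho^\eps$ and thereby rule out oscillation.
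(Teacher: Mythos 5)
Your overall architecture coincides with the paper's: parts (i) and (ii) are handled by the same classical arguments (Arzel\`a--Ascoli in $H^{-1}(\Omega)$, weak $L^2$ compactness of $E^{\eps}$, and lower semicontinuity of the Benamou--Brenier functional to write $E=\rho v$), and for part (iii) you identify the same two pillars --- the coupling $\|\rho^\eps-\sigma\phi^\eps\|_{L^2}^2\leq C\eps$ coming from \eqref{eq:Jepsd}, and the Modica--Mortola structure made visible by \eqref{eq:Jepsd2} --- as well as the same endgame for the energy inequality (vanishing entropy, the $\Gamma$-liminf of Proposition \ref{prop:Gamma}, lower semicontinuity of the dissipation). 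Your remark that the convergence of $\J_\eps(\rho_{in})$ to $\J_0(\rho_{in})$ is a well-preparedness issue is, if anything, more careful than the text.

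The one step that does not survive scrutiny as written is the compactness step, which you yourself call the crux. You invoke ``the compactness half of the $\Gamma$-convergence established in \cite{MW} (Theorem \ref{thm:4})'', but Theorem \ref{thm:4} is a pure $\Gamma$-convergence statement with no compactness clause, and the corresponding estimates in \cite{MW} are derived only for characteristic functions $\rho=\chi_E$; here $\rho^\eps$ (hence $\sigma\phi^\eps$) takes values in all of $[0,1]$, and extending the compactness mechanism to this case is precisely the new ingredient of this paper, not something that can be cited. Relatedly, your ``uniform $\BV$-type bounds'' cannot be bounds on $\psi^\eps=\sigma\phi^\eps$ itself: the energy only yields $\|\na\phi^\eps\|_{L^2}\leq C\eps^{-1/2}$. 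What is bounded in $\BV(\Omega)$, uniformly in $t$, is the composition $F(\sigma\phi^\eps)$ with $F'(t)=2\min(t,1-t)$; this follows from the pointwise inequality $\min(t,1-t)^2\leq(1-\rho)t^2+\rho(1-t)^2$ (valid for all $\rho\in[0,1]$), Young's inequality, and \eqref{eq:Jepsd2} --- this is exactly the paper's inequality \eqref{eq:BVphi}. Consequently your interpolation argument for uniformity in $t$ must be run on $F(\sigma\phi^\eps)$, and to transfer the conclusion back to $\rho^\eps$ you need the quantitative closeness $\|2F(\sigma\phi^\eps)-\rho^\eps\|_{L^2}\leq C\eps^{1/4}$, uniformly in $t$, which is where the measure bound \eqref{eq:rhodelta} on $\{\delta\leq\rho^\eps\leq 1-\delta\}$ enters (the paper's estimates \eqref{eq:ghj1}--\eqref{eq:ghj2}). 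Once these pieces are in place, the Lions--Aubin lemma (Lemma \ref{lem:LA}) applied to $\psi^n=2F(\sigma\phi^{\eps_n})$ gives the uniform-in-$t$ strong $L^1$ convergence in one stroke, making your per-time compactness plus subsequence-uniqueness detour unnecessary. In short: right strategy, but the black-box citation must be replaced by the chain-rule inequality \eqref{eq:BVphi} and the closeness estimates, with the $\BV$ bound attributed to $F(\sigma\phi^\eps)$ rather than to $\sigma\phi^\eps$.
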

Note that (i) and (ii) are classical. The most important statement is thus (iii).

%$\rho^{\eps_n,\tau_n}$ and $\widetilde{\rho}^{\eps_n,\tau_n}$ converge uniformly with respect to $W_2$  to the same limit
%and  $(E^{\eps_n,\tau_n})$ and $(\widetilde{E}^{\eps_n,\tau_n})$ converge narrowly (as bounded measure) to the same limit.

\begin{proof}
The a priori estimates of Lemma \ref{lem:unifestimates} implies (i).
Furthermore, we can pass to the limit 
 in \eqref{eq:weak11} to get
 $$
\int_\Omega \rho_{in} (x) \zeta(x,0)\, dx + \int_0^\infty \int_\Omega \rho\, \pa_t\zeta +E \cdot \na \zeta \, dx = 0 
$$
for any function $\zeta\in C^\infty_c([0,\infty)\times\overline \Omega)$. This is the continuity equation
\begin{equation}\label{eq:cont22}
\begin{cases}
 \pa_t \rho +\div E=0,\\ 
 \rho(x,0)=\rho_{in}(x).
 \end{cases}
 \end{equation}
To complete the proof of (ii) and derive \eqref{eq:weak11b}, we just need to show that $E$ can be written in the form $\rho v$.
This is a classical argument which we recall here:
First recall that the function
$$ \Theta: (\mu,F)\mapsto 
\begin{cases}
\displaystyle \int_0^T\int_\Omega \frac {|F|^2}{\mu}  & \mbox{  if } F \ll \mu \mbox{ a.e. } t\in [0,T]\\
+\infty & \mbox{ otherwise}
\end{cases}
$$
is lower semi-continuous for the weak convergence of measure.
Together with the uniform bound $\Theta (\rho^{\eps_n} ,E^{\eps_n} ) = \int_0^T\int_\Omega \rho ^{\eps_n} |v^{\eps_n}|^2 \leq C$ (see Lemma \ref{lem:unifestimates} (ii)), it implies that $E$ is absolutely continuous with respect to $\rho$ and that there exists $v(t,\cdot) \in L^2 (d\rho(t))$ such that $E = \rho v$.
Inserting this in \eqref{eq:cont22} yields \eqref{eq:weak11b}.

\medskip

The rest of the proof is devoted to (iii).
The fact that we can get stronger convergence for the density is non trivial and is due to the fact that the energy $\J_\eps$ controls the $\BV$ norm of $\phi^{\eps}$, which is close to $\rho^{\eps}$ when $\eps\ll1$. 
To see this, we introduce  the function
$$ F(t) = \int_0^t 2 \min(\tau,1-\tau)\, d\tau = \begin{cases} t^2 & \mbox{ for } 0\leq t\leq 1/2 \\ 2t-t^2 -\frac 1 2 & \mbox{ for } 1/2 \leq t\leq 1.\end{cases} $$
We then have  $F'(\sigma \phi) = 2 \min(\sigma \phi,1-\sigma \phi)$ and so
\begin{align*}
\frac{1}{\sigma^{3/2}}|\na F(\sigma \phi)| 
& \leq 2\frac{1}{\sqrt \sigma}  |\na \phi|\min(\sigma \phi,1-\sigma \phi) \\
& \leq \frac{1}{\sigma \eps} \min((\sigma \phi)^2,(1-\sigma \phi)^2) +  \eps |\na \phi|^2\\
& \leq \frac{1}{\sigma \eps}\left[ (1-\rho)  (\sigma \phi)^2+ \rho (1-\sigma \phi)^2 \right]+  \eps |\na \phi|^2
\end{align*}
(as long as  $0\leq \rho\leq 1$).
This inequality, together with the formula 
\eqref{eq:Jepsd2} for $\J_\eps$
 implies
 %Since $F'(\sigma \phi) = 2 \min(\sigma \phi,1-\sigma \phi)\leq 2 |\sigma \phi-\rho|$ when $\rho$ is a characteristic function, we have 
% $\frac{1}{\sigma^{3/2}}|\na F(\sigma \phi)| \leq 2\frac{1}{\sqrt \sigma}  |\na \phi| |\sigma \phi-\rho|\leq \frac{1}{\sigma \eps}  |\phi-\rho|^2+  \eps |\na \phi|^2$ and so
\begin{equation} \label{eq:BVphi}
\frac{1}{2\sigma^{3/2}} \int_\Omega |\na F(\sigma {\phi^\eps})| \, dx\leq  \J_\eps (\rho^\eps) 
% \qquad \forall \rho \in BV(\Omega;\{0,1\}), \quad\mbox{ where } \phi(x) = \int_\Omega G(x,y) \rho(y)\, dy.
\end{equation}
(note that in \cite{MW} a similar inequality was derived when $\rho$ is a characteristic function. The computation above extends this important property of $\J_\eps$ to the case where $0\leq\rho\leq 1$ by using the formula \eqref{eq:Jepsd2}).

% \begin{equation}\label{eq:young}
% |\na F(\phi)| \leq 2 u v_\eps \leq u_\eps^2 + v_\eps^2 - (u_\eps-v_\eps)^2, \quad \quad  \mbox{ with } 
%  u_\eps : = \eps^{1/2} |\na {\phi^\eps}| \mbox{ and }  v_\eps = \eps^{-1/2} |\phi^\eps-\rho^\eps|.
%\end{equation}
%In particular, we have  $u_\eps^2 + v_\eps^2-|\na F(\phi^\eps)|\geq 0$ and 
%Since $\phi^\eps \to \rho$ in $L^1$ and $F(0)=0$, $F(1) =1/2$, we have $F(\rho^\eps) \to \frac 1 2 \rho$ in $L^1$ and so:
%$$ \liminf_{\eps\to 0} \int_\Omega |\na F(\phi^\eps)| \, dx \geq \frac 1 2 \int_\Omega |\na\rho|$$

Inequality \eqref{eq:BVphi} shows that the boundedness of the energy $\J_{\eps_n}(\rho^{\eps_n }) $ implies some a priori estimates for the auxiliary function
$$ \psi^n := 2 F(\sigma \phi^{\eps_n })$$
More precisely, \eqref{eq:BVphi} and Lemma \ref{lem:unifestimates} (i) imply that
\begin{equation}\label{eq:psiBV}
  \psi^n \mbox{  is bounded in } L^\infty((0,T);BV(\Omega)) .
  \end{equation}

Next, we can write
\begin{align}
\psi^n 
%= [2 F(\sigma \phi^{\eps_n,\tau_n}) - 2 F(\rho^{\eps_n,\tau_n}) ] +[2 F(\rho^{\eps_n,\tau_n})-\rho^{\eps_n,\tau_n}] + \rho^{\eps_n,\tau_n}\nonumber \\
& = [2 F(\sigma \phi^{\eps_n }) - 2 F(\rho^{\eps_n }) ] +[2 F(\rho^{\eps_n })-\rho^{\eps_n}]  + \rho^{\eps_n} \label{eq:psin}
\end{align}
and we are going to show that the first two terms in the right hand side go to zero (uniformly in $t$):
\begin{itemize}
\item Formula \eqref{eq:Jepsd} and the energy bound (Lemma \ref{lem:unifestimates} (i))  imply
$$ \| \rho^{\eps_n}(t)-\sigma \phi^{\eps_n}(t) \|_{L^2(\Omega)}^2 \leq2\sigma \eps_n \J_\eps(\rho^{\eps_n}(t))\leq 2\sigma \eps_n \J_\eps(\rho_{in})\leq C\eps_n$$
and since $F$ is Lipschitz, we deduce
\begin{equation}\label{eq:ghj1}
 \| 2 F(\rho^{\eps_n}(t))-2 F(\sigma \phi^{\eps_n}(t))\|_{L^2(\Omega)}^2 
\leq C  \| \rho^{\eps_n}(t)-\sigma \phi^{\eps_n}(t)\|_{L^2(\Omega)}^2  \leq C\eps_n \qquad \forall t>0.
\end{equation}
\item When $\rho$ is a characteristic function, we have $2 F(\rho) =\rho$ and so the second term in \eqref{eq:psin} vanishes. When $\rho\in (0,1)$, we can use the fact that
$|2 F(\rho) -\rho| \leq C \delta $ whenever $\rho<\delta $ or $\rho>1-\delta$ and use the energy to control the set where $\delta \leq \rho\leq 1-\delta$. Indeed,
formula \eqref{eq:Jepsd} and the energy bound (Lemma \ref{lem:unifestimates} (i))  imply
\begin{equation}\label{eq:rhodelta}
|\{\delta\leq \rho^{\eps_n} \leq 1-\delta\}|\leq \frac{1}{\delta(1-\delta)} \int_\Omega \rho^{\eps_n} (1-\rho^{\eps_n})\, dx \leq  \frac{\sigma \eps_n} {\delta(1-\delta)}  \J_\eps(\rho^{\eps_n}(t))
\leq 
C \frac{ \eps_n}{\delta(1-\delta)}.
\end{equation}
We deduce (with $\delta=\sqrt{ \eps_n}$):
\begin{align*}
\int_\Omega |2 F(\rho^{\eps_n}) -\rho^{\eps_n}| \, dx 
& \leq \int_{\{\sqrt {\eps_n}\leq \rho^{\eps_n}\leq 1-\sqrt {\eps_n}\}} |2 F(\rho^{\eps_n}) -\rho^{\eps_n}| \, dx 
+ C|\Omega|\sqrt {\eps_n}\\
& \leq C |\{\sqrt {\eps_n}\leq \rho^{\eps_n}\leq 1-\sqrt {\eps_n}\}| + C|\Omega|\sqrt {\eps_n}\\
& \leq C\sqrt {\eps_n}
\end{align*}
and so 
\begin{equation}\label{eq:ghj2}
\| 2 F(\rho^{\eps_n(t) }) -\rho^{\eps_n}(t)\|_{L^2(\Omega)} \leq C\eps_n^{1/4} \quad \forall t>0.
\end{equation}
%\item We already saw that $\rho^{\eps_n}-\widetilde\rho^{\eps_n}$ converges to $0$ uniformly in $t$, with respect to $W_2$, and thus also with respect to the $H^{-1}(\Omega)$ norm.
%\item The fact that $\pa_t \widetilde \rho^{\eps_n} = -\div \widetilde E^{\eps_n} $ with $\widetilde E^{\eps_n} $ bounded in $L^2$ implies that $\pa_t \widetilde \phi^{\eps_n}$ is bounded in $L^2(0,T; H^{-1}(\Omega))$. In particular, 
%\item  We already saw that  Lemma \ref{lem:unifestimates} (iii) gives
%that 
\end{itemize}
Since we already know that $  \rho^{\eps_n}$ converges uniformly in $t$, with respect to the  $H^{-1}(\Omega)$ norm, to $\rho$, we deduce from \eqref{eq:psin}, \eqref{eq:ghj1} and \eqref{eq:ghj2} that 
\begin{equation}\label{eq:psiH}
\psi^n(t) \to \rho(t) \mbox{ in $H^{-1}(\Omega)$, uniformly in $t$}.
\end{equation}
\medskip

Using a Lions-Aubin compactness type result (see Lemma \ref{lem:LA}),
\eqref{eq:psiBV} and \eqref{eq:psiH} imply
$$ \psi^n \to \rho \qquad \mbox{ strongly in }  L^\infty((0,T);L^1( \Omega)).$$
In particular, the lower semicontinuity of the $BV$ norm and \eqref{eq:psiBV}  imply that $\rho \in L^\infty((0,T);BV( \Omega))$.
\medskip

Finally, using \eqref{eq:psin} together with 
\eqref{eq:ghj1} and \eqref{eq:ghj2} we see that
$$ \| \rho^{\eps_n}(t) - \psi^n(t)\|_{L^2(\Omega)} \leq C\eps_n^{1/4}$$
so the strong convergence of $\psi^n$ also implies that
%writing $ \rho^{\eps_n} = \psi_n + [2 F(\rho^{\eps_n})-2 F(\sigma \phi^{\eps_n})]$.
%  we get:
$$\rho^{\eps_n} \to \rho \qquad \mbox{ strongly in }  L^\infty((0,T);L^1( \Omega)).$$
\medskip

It remains to show that $\rho$ is a characteristic function. We note that given $t>0$, we can extract a subsequence which converges a.e. in $\Omega$ and \eqref{eq:rhodelta} implies that $|\{\delta\leq \rho(t) \leq 1-\delta\} = 0$ for all $\delta>0$.
We deduce that
$\rho (x,t)\in \{0,1\}$ a.e. $x\in  \Omega$ (for all $t>0$).
\medskip

Finally, we can pass to the limit in  \eqref{eq:energy} to get the energy inequality \eqref{eq:energy2}:
The lower-semicontinuity of $\Theta$ allows us to pass to the limit in the dissipation and 
Proposition \ref{prop:4} gives (since $ \F_\eps (\rho_{in} ) = \J_\eps(\rho_{in}$) when $\rho_{in}$ is a characteristic function)
$$\lim_{\eps\to 0 } \F_\eps (\rho_{in} ) = \J_0(\rho_{in}).$$
The  liminf property of Proposition \ref{prop:Gamma} and the strong convergence of $\rho^\eps$ then yield:
\begin{align*} \liminf_{\eps\to 0 } \F_\eps (\rho_{in} )  
& \geq   \liminf_{\eps\to 0 } \mu \int_\Omega \rho^\eps \log \rho^\eps \, dx + \liminf_{\eps\to 0 } \J_\eps (\rho^\eps)\\
& \geq \mu \int_\Omega \rho  \log \rho \, dx + 
 \J_0(\rho)\\
& \geq  \J_0(\rho).
\end{align*}
This completes the proof of Proposition \ref{prop:rhostrong}.

\end{proof}

\section{Convergence of the first variation}\label{sec:lim}
\subsection{Convergence of the first variation and proof of Theorem \ref{thm:conv1}}
%and 
%$$
%\begin{cases}
%\pa_t \rho  + \div(\sigma^{-3/2}\bar \eps^{-1}\rho \na \bar\phi-\na p)=0 , \mbox{ in } \Omega\times(0,\infty), \qquad p\in P(\rho)\\
%(\eps^{-1}\rho \na \phi-\na p)\cdot n = 0 \mbox{ on }\pa \Omega\times(0,\infty)\\
%\rho(x,0) = \rho_{in}(x) \mbox{ in } \Omega
%\end{cases}
%$$

The only thing left to do to prove Theorem \ref{thm:conv1}
is to pass to the limit in equation \eqref{eq:weak12} (under the convergence assumption \eqref{eq:EA})
to derive \eqref{eq:weak12b}.
We recall \eqref{eq:weak12} here:
$$\int_0^\infty \int_\Omega E^{\eps} \cdot \xi\, dx\, dt   = \int _0^\infty \int_\Omega \eps^{-1} \rho^{\eps} \na \phi^{\eps} \cdot \xi  +\mu  \rho^{\eps}\div \xi+ p^{\eps}\div \xi\, dx \, dt
$$ 
and we note that when $\eps\ll1$, neither the  term $\eps^{-1} \rho^{\eps} \na \phi^{\eps}$ nor  the function $p^{\eps}$ (or  its gradient $\na p^{\eps}$) are bounded. 
As explained in the introduction, it is the modified pressure, defined by
$p^{\eps} + \eps^{-1} \left(\frac 1 {2\sigma} - \phi^{\eps} \right)  \rho^{\eps} $
which is expected to converge to the pressure in the Hele-Shaw model with surface tension. We can also include the term $\mu  \rho^{\eps}$ in this modified pressure and normalize it to have zero average. We thus set:
\begin{equation}\label{eq:modp}
q^{\eps} = p^{\eps} + \eps^{-1} \left(\frac 1 {2\sigma} - \phi^{\eps} \right)  \rho^{\eps} + \mu  \rho^{\eps} + m^\eps \rho^\eps
\end{equation}
where $m^\eps(t)$ (constant in $x$) is chosen so that
\begin{equation}\label{eq:zero}
\int_\Omega q^\eps(x,t)\, dx=0 \quad \forall t>0.
\end{equation}

After a straightforward computation, we can  rewrite  \eqref{eq:weak12}  as 
\begin{equation}\label{eq:weakmc}
\int_0^\infty \int_\Omega E^{\eps} \cdot \xi\, dx\, dt   = 
\int _0^\infty \int_\Omega \eps^{-1} \left(\frac 1 {2\sigma} - \phi^{\eps} \right) \xi  \cdot \na  \rho^{\eps} + q^{\eps}\div \xi\, dx\, dt .
\end{equation}
Passing to the limit in \eqref{eq:weakmc} is the objective of this section. The main challenge being the  term
\begin{equation}\label{eq:sing}
\int _0^\infty \int_\Omega \eps^{-1} \left(\frac 1 {2\sigma} - \phi^{\eps} \right) \xi  \cdot \na  \rho^{\eps}\, dx\, dt,
\end{equation}
which gives rise to the mean-curvature and the contact angle condition in the limit $\eps\to0$.
This term is related to the first variation of the energy $\J_\eps$ defined by \eqref{eq:Jeps}. 
The key result of this section is Proposition \ref{eq:cvass} below, which states that the assumption on the convergence of the energy \eqref{eq:EA} implies the convergence of the first variation. 
Similar results have been proved  for  different energy functionals.
In particular, a classical result of Reshetnyak \cite{Reshetnyak} gives that if  $\chi_{E_\eps}$ converges to $\chi_E$ strongly in $L^1$, then the convergence of the perimeter 
$$ P(E_\eps):=\int|\na \chi_{E_\eps}| \to \int|\na \chi_E|,$$
implies the convergence of the first variation to
$$ \int(\div \xi - \nu \otimes\nu:D\xi) |D\chi_E|.$$
A similar result was proved by Luckhaus and Modica \cite{LM} for the Ginzburg-Landau functional 
$$E^1_\eps (\rho) = \int_\Omega \eps |\na \rho|^2 + \frac 1 \eps (1-\rho^2)^2 \, dx$$
and by Laux and Otto \cite{LO} with
$$ E^1_\eps (\rho)  = \eps^{-1} \int \rho (1- G_\eps \star \rho)\, dx$$
when $G_\eps$ is the Gaussian kernel of variance $\eps^2$.

\medskip

%In our case, we recall that when restricted to characteristic functions, the $\Gamma$-convergence of our energy functional was proved in \cite{MW}.
%More precisely, the sequence of  functionals $\J_\eps$ defined by \eqref{eq:Je1}  $\Gamma$-converges to
%$$
%\J_0(\rho) :=
%\begin{cases}
% \displaystyle \frac 1 {4\sigma^{3/2}} \left[  \int_\Omega |D\rho|  + \int_{\pa \Omega } \min\left( 1 ,\frac{2 \alpha(x)}{\alpha(x)+\sqrt \sigma\beta(x)} \right) \rho(x) \, d\H^{n-1}(x) \right]& \mbox{ if } \rho\in\{0,1\}\\[4pt]
%+\infty & \mbox{ otherwise.}
%\end{cases}
%$$
%\textcolor{red}{We need to comment on the difference between this formula and \eqref{eq:convJ}}.

Crucially, both $E^1$ and $E^2$ are regularizations of the perimeter functional.
In our case, we recall (see Theorem \ref{thm:4}) that the energy functional $\J_\eps$ $\Gamma$-converges to the perimeter functional as well, together with a boundary term.
Furthermore, it is easy to check that \eqref{eq:sing} is the first variation of $\J_\eps(\rho)$ for a perturbation defined by 
\begin{equation}\label{eq:rhos1}
\begin{cases}
\pa_s \rho_s + \na \rho_s \cdot \xi = 0 \\
\rho_s|_{s=0} = \rho.
\end{cases}
\end{equation}
%Indeed, we have:
%$$ 
%\frac{d}{ds} \J_\eps(\rho_s) |_{s=0}  = \frac{1}{2\sigma \eps} \int \pa_s \rho_s|_{s=0} (1-\sigma \phi_s) -  \frac{1}{2  \eps} \int \rho \pa_s \phi_s |_{s=0}.
%$$
%We can then use the facts that $\pa_s \rho_s|_{s=0} = -\na \rho\cdot \xi $ and that  $ \pa_s \phi_s  = \int_\Omega G_\eps(x,y) \pa_s \rho_s\, ds $ to get:
%$$ 
%\delta \J_\eps (\rho)[\xi] =
% \frac{1}{2\sigma \eps}  \int \pa_s \rho_s|_{s=0} (1-\sigma \phi_s) - \frac{1}{2\sigma \eps}  \int\sigma \phi_s   \pa_s  \rho_s |_{s=0}
%=
%-  \eps^{-1} \int\left(\frac{1}{2\sigma}-\phi_s\right)  \na \rho\cdot \xi .
%$$

\begin{remark}
Note that \eqref{eq:rhos1} preserves the constraint $\rho_s\leq1 $ but not the condition $\int \rho_s=1$ (unless $\div \xi=0$).
Alternatively, we could consider the first variation of 
$\J_\eps(\rho)$ for a perturbation defined by 
\begin{equation}\label{eq:rhos2}
\begin{cases}
\pa_s \rho_s + \div( \rho_s \xi) = 0 \\
\rho_s|_{s=0} = \rho.
\end{cases}
\end{equation}
which leads to the integral $  \eps^{-1} \int_\Omega \rho\, \xi \na (\frac{1}{2\sigma}-\phi^\eps) \, dx$. The two integrals are the same when $\div \xi =0 $, but this second integral does not, in general, converge when $\eps\to 0$. This is due to the fact that \eqref{eq:rhos2} does not preserve characteristic functions and the energy $\J_\eps$ blows up in that case for $\eps\ll1$.
%and denote by $\phi_s$ the corresponding solution of \eqref{eq:phi}. We then have
%$$ 
%\delta \J_\eps (\rho)[\xi] = \frac{d}{ds} \J_\eps(\rho_s) |_{s=0}  = \eps^{-1} \int \pa_s \rho_s|_{s=0} (1-\phi^\eps) - \eps^{-1} \int \rho \pa_s \phi_s |_{s=0}.
%$$
%We can now use the fact that $\pa_s \rho_s|_{s=0} = -\na \rho\cdot \xi $ and that the function $\psi = \pa_s \phi_s |_{s=0}$ solves
%$$
%\begin{cases}
%\psi - \eps^2 \Delta \psi = \pa_s\rho_s|_{s=0} = -\na \rho\cdot \xi & \mbox{ in } \Omega\\
%\alpha \psi + \beta \eps \na \psi\cdot n = 0 & \mbox{ on } \Omega
%\end{cases}
%$$
%We can thus write (using \eqref{eq:phi} and a couple of integration by parts):
%$$\int_\Omega \rho \psi \, dx = \int_\Omega [\phi^\eps-\eps^2\Delta {\phi^\eps}]\psi\, dx =  \int_\Omega {\phi^\eps} [\psi-\eps^2\Delta\psi]\, dx = - \int_\Omega {\phi^\eps} \na \rho\cdot \xi\, dx$$
%and \eqref{eq:dJ0}  follows.
%\delta \J_\eps (\rho)[\xi]  = -\eps^{-1} \int_\Omega (1-2\phi^\eps) \xi \cdot \na \rho
\end{remark}

The key result of this section, which will allow us to complete the proof of Theorem \ref{thm:conv1} is the following:
\begin{proposition}\label{prop:firstvar}
Given a sequence of functions $\rho^\eps\in L^1(\Omega)$ satisfying $0\leq \rho^\eps\leq 1$  and such that $\rho^\eps\to\rho$ strongly in $L^1(\Omega)$ with $\rho\in \BV(\Omega;\{0,1\})$
and 
\begin{equation}\label{eq:cvass}
\lim_{\eps\to 0} \J_\eps (\rho^\eps) =\J_0 (\rho),
 \end{equation}
we have, for all $\xi \in C^1(\Omega , \R^d)$ satisfying $\xi\cdot n=0$ on $\pa\Omega$,
\begin{equation}\label{eq:bd} 
\left| \eps^{-1}   \int_\Omega \left(\frac{1}{2\sigma}-\phi^\eps\right) \xi \cdot \na \rho^\eps  \right| \leq C \|D\xi\|_{L^\infty(\Omega)  } \J_\eps(\rho^\eps)
\end{equation}
and
\begin{align}
&\lim_{\eps\to 0} -\eps^{-1} \int_\Omega \left(\frac{1}{2\sigma}-\phi^\eps\right) \xi \cdot \na \rho^\eps
 \nonumber \\
& \qquad \qquad=\frac1 {4\sigma^{3/2}} \left[  \int_\Omega \left[  \div \xi - \nu\otimes \nu :D\xi\right] |\na \rho|
+ \min\left(1 ,\frac{2\alpha}{\alpha+\sqrt\sigma\beta} \right) \int_{\pa\Omega} \left[  \div \xi - n\otimes n :D\xi\right]  \rho\,  d\mathcal H^{n-1}(x)\right]\label{eq:limitdJ} 
\end{align}
where $\nu = \frac{\na \rho}{|\na \rho|}$ and $n$ denotes the outward normal unit vector to the fixed boundary $\pa \Omega $.
\end{proposition}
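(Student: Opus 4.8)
The plan is to recognize the integral in \eqref{eq:bd}--\eqref{eq:limitdJ} as (minus) the first variation of $\J_\eps$ along the transport flow $\rho_s$ of \eqref{eq:rhos1}, and to recompute this first variation as an \emph{inner variation} that exposes a Modica--Mortola type stress tensor. Let $\Phi_s$ denote the flow of $\xi$; since $\xi\cdot n=0$ on $\pa\Omega$, $\Phi_s$ maps $\Omega$ onto itself and $\rho_s=\rho^\eps\circ\Phi_s^{-1}$ solves \eqref{eq:rhos1}. Writing $\J_\eps(\rho)=\frac{1}{2\sigma\eps}\int_\Omega\rho\,dx+\frac1\eps\,\mathcal Q_\rho(\phi^\eps)$, where $\phi^\eps$ is the unique minimizer of the strictly convex functional $\mathcal Q_\rho(\psi)=\int_\Omega[\tfrac\sigma2\psi^2+\tfrac{\eps^2}2|\na\psi|^2-\rho\psi]\,dx+\tfrac{\eps\alpha}{2\beta}\int_{\pa\Omega}\psi^2\,d\H^{n-1}$ whose Euler--Lagrange equation is exactly \eqref{eq:phi0}, I would pull $\mathcal Q_{\rho_s}$ back by $\Phi_s$ and differentiate at $s=0$. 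The point is that after the pull-back the forcing term becomes $-\int_\Omega\rho^\eps\,\tilde\psi\,\det D\Phi_s$ (with no $s$-dependence in the density), so all the $s$-dependence sits in the Jacobian $\det D\Phi_s$ and the pulled-back metric $(D\Phi_s)^{-1}$, whose derivatives at $s=0$ are $\div\xi$ and $-(D\xi)^T$. Because $\phi^\eps$ minimizes, the envelope theorem kills the variation of the minimizer and leaves only these explicit geometric terms, yielding the clean identity
\begin{equation}\label{eq:stress}
-\eps^{-1}\int_\Omega\Big(\tfrac1{2\sigma}-\phi^\eps\Big)\xi\cdot\na\rho^\eps
=\int_\Omega\big[\,e^\eps\,\div\xi-\eps\,\na\phi^\eps\otimes\na\phi^\eps:D\xi\,\big]\,dx+R^\eps_{\pa\Omega},
\end{equation}
where $e^\eps=\frac{1}{2\sigma\eps}[\rho^\eps(1-\rho^\eps)+(\rho^\eps-\sigma\phi^\eps)^2]+\frac\eps2|\na\phi^\eps|^2$ is precisely the bulk energy density of \eqref{eq:Jepsd} and $R^\eps_{\pa\Omega}$ collects the boundary contributions of the surface energy (with the obvious modification when $\beta=0$).

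With \eqref{eq:stress} in hand, the bound \eqref{eq:bd} is immediate: under the constraint $0\le\rho^\eps\le1$ every term defining $e^\eps$ is nonnegative, so $\eps|\na\phi^\eps|^2\le 2e^\eps$ and each entry of the tensor $e^\eps I-\eps\,\na\phi^\eps\otimes\na\phi^\eps$ is pointwise bounded by $Ce^\eps$; integrating and using $\int_\Omega e^\eps\le\J_\eps(\rho^\eps)$ together with the analogous control of $R^\eps_{\pa\Omega}$ by the boundary terms of \eqref{eq:Jepsd} gives \eqref{eq:bd}.

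The heart of the proof is the passage to the limit in \eqref{eq:stress}, which amounts to identifying the weak-$*$ limits of the measures $e^\eps\,dx$ and $\eps\,\na\phi^\eps\otimes\na\phi^\eps\,dx$. Here the energy-convergence assumption \eqref{eq:cvass} enters decisively through two facts. First, \emph{equipartition}: the Young inequality used to prove \eqref{eq:BVphi} becomes an asymptotic equality exactly when the total energy saturates its $\Gamma$-limit, so the potential part $\frac1{2\sigma\eps}[(1-\rho^\eps)(\sigma\phi^\eps)^2+\rho^\eps(1-\sigma\phi^\eps)^2]$ and the gradient part $\frac\eps2|\na\phi^\eps|^2$ become asymptotically equal as measures, each converging to half of the limiting interfacial measure. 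Second, \emph{alignment}: the auxiliary functions $\psi^\eps=2F(\sigma\phi^\eps)\to\rho$ in $BV$, and a Reshetnyak-type continuity argument identifies $\na\psi^\eps\rightharpoonup\nu\,|\na\rho|$ and forces the normalized gradient $\na\phi^\eps/|\na\phi^\eps|$ to concentrate along the interface normal $\nu=\na\rho/|\na\rho|$. Together these give $e^\eps\,dx\rightharpoonup\frac1{4\sigma^{3/2}}|\na\rho|$ and $\eps\,\na\phi^\eps\otimes\na\phi^\eps\,dx\rightharpoonup\frac1{4\sigma^{3/2}}\,\nu\otimes\nu\,|\na\rho|$ in $\Omega$, so that the bulk part of \eqref{eq:stress} converges to $\frac1{4\sigma^{3/2}}\int_\Omega(\div\xi-\nu\otimes\nu:D\xi)|\na\rho|$, the first term of \eqref{eq:limitdJ}.

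It remains to pass to the limit in the boundary remainder $R^\eps_{\pa\Omega}$, together with the trace on $\pa\Omega$ of the bulk measures, which is expected to produce the second term of \eqref{eq:limitdJ}; the coefficient is identified as $\min(1,\frac{2\alpha}{\alpha+\sqrt\sigma\beta})$ by matching with the boundary-layer analysis of \cite{MW} underlying Proposition~\ref{prop:4} and Theorem~\ref{thm:4}, the truncation $\min(1,\cdot)$ rather than the naive value $\frac{2\alpha}{\alpha+\sqrt\sigma\beta}$ of \eqref{eq:lims=2} reflecting the wetting transition already present in the $\Gamma$-limit. I expect the main obstacle to be precisely the two steps of the previous paragraph: upgrading the scalar convergence \eqref{eq:cvass} to the tensorial convergence of $\eps\,\na\phi^\eps\otimes\na\phi^\eps$ (equipartition plus alignment), and the careful control of the boundary layer needed to pin down the contact-angle coefficient, which is where the bounded-domain setting genuinely departs from the convolution-based analysis of \cite{LO,JKM}.
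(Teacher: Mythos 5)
Your architecture is correct, and for two of its three components it coincides with (or legitimately improves on) the paper's proof. The stress-tensor identity you derive by Danskin/envelope differentiation of the dual formulation $\J_\eps(\rho)=\frac{1}{2\sigma\eps}\int_\Omega\rho\,dx+\frac1\eps\min_\psi\mathcal Q_\rho(\psi)$ is exactly the paper's Lemma \ref{lem:firstvar} (identities \eqref{eq:dJN} and \eqref{eq:dJ}, up to an overall factor $2$), which the paper instead obtains by integrating by parts directly against \eqref{eq:phi0}; your route is a valid alternative, and it explains structurally why only the energy density and the geometric derivatives $\div\xi$, $(D\xi)^T$ survive (the variation of the minimizer $\phi^\eps$ drops out), at the modest cost of justifying differentiability of the $\min$, which is standard here since $\mathcal Q_\rho$ is uniformly convex and the pulled-back family is smooth in $s$. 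The bound \eqref{eq:bd} follows as you say. Your interior limit passage — saturation of Young's inequality giving equipartition ($u_\eps-v_\eps\to0$ in $L^2$ and $u_\eps^2+v_\eps^2-|\na F(\sigma\phi^\eps)|\to0$ in $L^1$), followed by Reshetnyak-type continuity to align $\na\phi^\eps/|\na\phi^\eps|$ with $\nu$ — is precisely the paper's argument, implemented there through the same function $F$ and Propositions \ref{prop:BV} and \ref{prop:measure}.

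The genuine gap is the boundary term in \eqref{eq:limitdJ}, and it cannot be closed by ``matching with the boundary-layer analysis of \cite{MW}.'' What \cite{MW} (and assumption \eqref{eq:cvass}) provide is convergence of the \emph{total} energy, a single scalar; what the proposition needs is convergence of the energy \emph{measures} on $\overline\Omega$, tested against $\div\xi$ and $D\xi$, which do not vanish on $\pa\Omega$. Near $\pa\Omega$ two families of measures compete: the interior measures $e^\eps\,dx$ and $\eps\,\na\phi^\eps\otimes\na\phi^\eps\,dx$, which may concentrate on $\pa\Omega$ in the limit (this is exactly the ``detachment'' scenario responsible for the truncation $\min(1,\cdot)$), and the genuine boundary measure $\frac\alpha\beta|\phi^\eps|^2\,\H^{n-1}\llcorner\pa\Omega$. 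Scalar energy convergence alone does not preclude mass being traded between these two families while the total stays fixed, so your claimed weak-$*$ limits hold only against test functions compactly supported in $\Omega$ (which suffices for Neumann, where $\J_0$ has no boundary term and convergence of total masses forces tightness, but not otherwise). The paper resolves this with a concrete localization mechanism: for Dirichlet, extend $\phi^\eps$ by zero and run the interior argument on $\R^d$, so that boundary mass is captured by $|\na F(\overline{\phi^\eps})|$; for Robin, introduce a continuous cutoff $\vphi$ with $\int_{\pa\Omega}|\chi_E-\vphi|\,d\H^{n-1}\le\delta$ and the function $G(t)=\frac\alpha\beta t^2-F(t)\ge\min\{0,\frac{\alpha}{\alpha+\beta}-\frac12\}$, then prove matching liminf/limsup bounds for each piece of the resulting four-term decomposition, which pins the limit of the combined near-boundary energy at $\min\{\frac12,\frac\alpha{\alpha+\beta}\}\int_{\pa\Omega}\rho\,d\H^{n-1}$ and lets $\delta\to0$ at the end. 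Without this step (or an equivalent argument ruling out interior-to-boundary mass transfer), the second term of \eqref{eq:limitdJ} — the contact-angle term, which is the novel content of the proposition — remains unproven.
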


Note that we can replace \eqref{eq:cvass} with the equivalent condition
$$
\lim_{\eps\to 0} \F_\eps (\rho^\eps) =\F_0 (\rho).
$$
Indeed a bound on $\J_\eps(\rho^\eps)$ or $\F_\eps(\rho^\eps)$ implies that $\rho \in \BV(\Omega;\{0,1\})$ 
(see the proof of Proposition \ref{prop:rhostrong}) so that $\F_0 (\rho) = \J_0 (\rho)$ and $\int \rho^\eps \log \rho^\eps \to 0$.

Proceeding as in \cite{LO}, we can check that this proposition imply the following time-dependent version:
\begin{corollary}\label{cor:1}
Given a sequence of characteristic functions $\rho^\eps(x,t)$ such that $\rho^\eps\to\rho$ in $L^1( \Omega\times(0,T))$ and 
$$
\lim_{\eps\to0} \int_0^T \J_\eps (\rho^\eps)\, dt = \int_0^T \J_0 (\rho) \, dt
$$
we have, for all $\xi \in C^1(\Omega\times(0,T) , \R^d)$ satisfying $\xi\cdot n=0$ on $\pa\Omega$,
\begin{align}
&\lim_{\eps\to 0} -\eps^{-1} \int_0^T \int_\Omega \left(\frac{1}{2\sigma}-\phi^\eps\right) \xi \cdot \na \rho^\eps dt
 \nonumber \\
& \qquad \qquad=\frac1 {4\sigma^{3/2}} \left[ \int_0^T \int_\Omega \left[  \div \xi - \nu\otimes \nu :D\xi\right] |\na \rho| \, dt\right.\nonumber \\
& \qquad \qquad \qquad\left.
+ \min\left(1 ,\frac{2\alpha}{\alpha+\sqrt \sigma\beta} \right) \int_0^T \int_{\pa\Omega} \left[  \div \xi - n\otimes n :D\xi\right]  \rho\,  d\mathcal H^{n-1}(x)\, dt\right]\label{eq:limitdJ2} 
\end{align}

\end{corollary}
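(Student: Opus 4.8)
The plan is to deduce the space--time statement of Corollary~\ref{cor:1} from the pointwise-in-time Proposition~\ref{prop:firstvar} by a slicing argument in the variable $t$. The only genuine difficulty is that the hypothesis here provides merely the \emph{time-integrated} energy convergence $\int_0^T \J_\eps(\rho^\eps)\,dt \to \int_0^T \J_0(\rho)\,dt$, whereas Proposition~\ref{prop:firstvar} requires, at a \emph{fixed} time $t$, the sharp convergence $\J_\eps(\rho^\eps(t)) \to \J_0(\rho(t))$ together with $L^1$-convergence $\rho^\eps(t)\to\rho(t)$. First I would fix representatives and extract a subsequence: since $\rho^\eps\to\rho$ in $L^1(\Omega\times(0,T))$, along a subsequence (not relabeled) $\rho^\eps(t)\to\rho(t)$ in $L^1(\Omega)$ for a.e.\ $t\in(0,T)$, and $\rho(t)\in\BV(\Omega;\{0,1\})$ at each such $t$.

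The crux is the second step, where I would upgrade the integrated energy convergence to a.e.-pointwise energy convergence. Set $f_\eps(t):=\J_\eps(\rho^\eps(t))\ge 0$ and $f(t):=\J_0(\rho(t))\ge 0$, noting $f\in L^1(0,T)$ because the bound $\rho\in L^\infty(0,T;\BV)$ controls $\J_0(\rho(t))$ by the perimeter. The $\Gamma$-liminf inequality underlying Theorem~\ref{thm:4}, applied at a.e.\ fixed $t$ via the $L^1$-convergence from the first step, gives $\liminf_\eps f_\eps(t)\ge f(t)$ for a.e.\ $t$. Hence $(f(t)-f_\eps(t))_+\to 0$ a.e., and since $f_\eps\ge 0$ this positive part is dominated by $f(t)\in L^1(0,T)$; ordinary dominated convergence yields $\int_0^T (f-f_\eps)_+\,dt\to 0$. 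Combining this with the hypothesis $\int_0^T f_\eps\,dt\to\int_0^T f\,dt$ forces $\int_0^T (f_\eps-f)_+\,dt\to 0$ as well, so $f_\eps\to f$ in $L^1(0,T)$. Passing to a further subsequence, $f_\eps(t)\to f(t)$ for a.e.\ $t$.

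In the third step I would apply the static result and integrate. At a.e.\ $t$ the hypotheses of Proposition~\ref{prop:firstvar} hold for the slice $\rho^\eps(t)$, with the field $\xi(\cdot,t)\in C^1(\Omega)$, so the pointwise identity \eqref{eq:limitdJ} holds for a.e.\ $t$. To exchange limit and time integral I would invoke the generalized dominated convergence theorem: writing $h_\eps(t):=-\eps^{-1}\int_\Omega\left(\frac{1}{2\sigma}-\phi^\eps\right)\xi\cdot\na\rho^\eps$, the bound \eqref{eq:bd} gives $|h_\eps(t)|\le C\,\|D\xi\|_{L^\infty(\Omega\times(0,T))}\,f_\eps(t)=:G_\eps(t)$, where the majorants satisfy $G_\eps\to C\,\|D\xi\|_{L^\infty}\,f$ both a.e.\ and in $L^1(0,T)$ by the second step. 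Since $h_\eps\to h$ a.e.\ and the majorants converge in $L^1$, the version of dominated convergence with $\eps$-dependent, $L^1$-convergent majorants permits passing to the limit, producing exactly \eqref{eq:limitdJ2}.

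Finally, since the right-hand side of \eqref{eq:limitdJ2} is determined by $\rho$ alone and is therefore independent of the extracted subsequence, a standard subsequence-of-subsequence argument promotes the convergence to the full sequence $\eps\to 0$. The main obstacle is the second step: one must combine the nonnegativity of every term in $\J_\eps$ (visible from formula \eqref{eq:Jepsd2}) with the $\Gamma$-liminf inequality to convert the weak, integrated energy hypothesis into the pointwise energy convergence that Proposition~\ref{prop:firstvar} actually consumes.
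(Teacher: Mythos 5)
Your proof is correct and takes essentially the same route as the paper, which simply invokes the argument of \cite{LO}: slice in time, upgrade the integrated energy convergence to a.e.-in-$t$ energy convergence via the pointwise $\Gamma$-liminf inequality (your positive/negative part bookkeeping is exactly that step), then apply Proposition~\ref{prop:firstvar} at a.e.\ time and pass to the limit in the time integral using the bound \eqref{eq:bd} together with generalized dominated convergence and a subsequence-of-subsequences argument. Your write-up fills in the details the paper leaves to the reference, with no gaps.
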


With this corollary, we can now  complete the proof of Theorem \ref{thm:conv1}.

\begin{proof}[End of the proof of Theorem \ref{thm:conv1}]
We can now pass to the limit in 
\eqref{eq:weakmc} and thus complete the proof of Theorem \ref{thm:conv1}.
We recall that $E^{\eps_n}$ converges weakly in $L^2$ and the convergence of the first term in the right hand side follows from Corollary \ref{cor:1}, which we can use here since Proposition \ref{prop:rhostrong} gives the strong convergence of $\rho^{\eps_n}$ in $L^1$ and 
we are assuming condition \eqref{eq:EA}.
We thus only need to explain why the pressure $q^\eps$ converges.
\medskip

%This follows from Lemma \ref{lem:firstvar} below.
%Indeed, formula \eqref{eq:dJ} and \eqref{eq:Jepsd} imply
%$$ \left| \eps^{-1} \int_0^T \int_\Omega (1-2\phi^\eps) \xi \cdot \na \rho  \right|\, dt \leq \|D\xi\|_{L^\infty((0,T)\times\Omega)  }\int_0^T \J_\eps(\rho)\, dt
%$$
We note that \eqref{eq:weakmc} and \eqref{eq:bd}  imply
\begin{equation}\label{eq:hsj}
\left| \int_0^T \int_\Omega q^{\eps}\div \xi\, dx\, dt \right|
\leq \| E^{\eps}\|_{L^2(\Omega\times(0,T)) }\| \xi\|_{L^2(\Omega\times(0,T)) }\
+ C \J_\eps(\rho(t)) \|D\xi\|_{L^1((0,T);L^\infty(\Omega))  }
\end{equation}
In  particular, the a priori estimates of Lemma \ref{lem:unifestimates} implies that
 $\na q^{\eps}$ is bounded in $L^2((0,T); (C^1(\Omega))^*)$.

To get a bound on $q^\eps$, we proceed as in  \cite{JKM}: 
For $\vphi \in C^s(\Omega)$, we consider $u$ solution of 
$$ 
\begin{cases}
\Delta u = \vphi  - \fint_\Omega \vphi\, dx & \mbox{ in } \Omega\\
\na u\cdot n = 0 & \mbox{ on } \pa\Omega
\end{cases}
$$
and take $\xi=\na u$ as a test function in \eqref{eq:hsj} to get (using classical Schauder estimates)
$$
\left| \int_0^T \int_\Omega q^{\eps}\Delta u \, dx\, dt \right|
\leq  C \|D^2 u \|_{L^2((0,T); L^\infty(\Omega)) }
\leq C \|u \|_{L^2((0,T); C^{2,s}(\Omega)) }
\leq C \|\vphi \|_{L^2((0,T); C^{s}(\Omega))} 
$$
for any $s>0$.
Using \eqref{eq:zero} we deduce
$$
\left| \int_0^T \int_\Omega q^{\eps}\vphi  \, dx\, dt \right|
\leq C \|\vphi \|_{L^2((0,T); C^{s}(\Omega))} 
$$
which implies that $q^\eps $ is uniformly bounded in $L^2((0,T); (C^s(\Omega))^*)$ and has a weak-* limit $q$.
We can now pass to the limit in \eqref{eq:weakmc}, for $\xi$ smooth enough, and derive \eqref{eq:weak12b}.
 \end{proof}

\subsection{Proof of Proposition \ref{prop:firstvar}}
As noted earlier, if we set $\bar \phi = \sigma \phi $, $\bar \eps= \eps/\sqrt\sigma$ and $\bar \beta =\sqrt \sigma \beta$, equation  \eqref{eq:phi0} becomes
$$
\begin{cases}
\bar \phi -\bar \eps^2\Delta  \bar \phi = \rho  &  \mbox{ in } \Omega\\
 \alpha \bar \phi +\bar \beta \bar \eps \nabla \bar \phi \cdot n = 0 & \mbox{ on } \pa\Omega.
\end{cases}
$$
It is therefore enough to prove the result when $\sigma=1$ and use the fact that
$$
\eps^{-1} \int_\Omega \left(\frac{1}{2\sigma}-\phi^\eps\right) \xi \cdot \na \rho^\eps
={\bar \eps}^{-1} \frac{1}{\sigma^{3/2}} \int_\Omega \left(\frac{1}{2}-\bar {\phi^\eps}\right)  \xi \cdot \na \rho^\eps
$$
to get the result when $\sigma>0$.
\medskip

The proof of Proposition \ref{prop:firstvar} makes use of the following lemma, the proof of which is elementary and presented at the end of this section:
\begin{lemma}\label{lem:firstvar}
Given $\rho(x)$ such that $0\leq\rho\leq 1$ and $\phi$ solution of \eqref{eq:phi0} with $\sigma=1$, 
we have the following formulas, for all $\xi \in C^1(\Omega , \R^d)$:

%let $\rho_s$ be defined by \eqref{eq:rhos1}. Then we have
%\begin{equation}\label{eq:dJ0} 
%\delta \J_\eps (\rho)[\xi] := \frac{d}{dt} \J_\eps(\rho_s)\big|_{s=0}=  -\eps^{-1} \int_\Omega (1-2\phi^\eps) \xi \cdot \na \rho
%\end{equation}
%which can also be written as:
\item(i) If $\alpha=0$ or $\beta=0$ (Neumann or Dirichlet boundary condition for $\phi$) then 
\begin{align}
% \delta \J_\eps (\rho)[\xi] 
 -\eps^{-1} \int_\Omega (1-2\phi ) \xi \cdot \na \rho & =  \eps^{-1} \int_\Omega \left[ (1-\rho  ) \phi ^2 + \rho (1- \phi )^2 \right]\div \xi \, dx + \eps  \int_\Omega   |\na {\phi}|^2 \div \xi\, dx \nonumber \\
&\qquad -2\eps \int_\Omega \na {\phi}\otimes\na\phi : D\xi \, dx  \label{eq:dJN}
\end{align}
\item(ii) If $\beta \neq 0$, then
\begin{align}
% \delta \J_\eps (\rho)[\xi] 
 -\eps^{-1} \int_\Omega (1-2\phi ) \xi \cdot \na \rho & = \eps^{-1} \int_\Omega \left[ (1-\rho  ) \phi ^2 + \rho (1- \phi )^2 \right]\div \xi \, dx  \nonumber\\
 &\qquad\qquad + \eps  \int_\Omega   |\na {\phi }|^2 \div \xi\, dx +  \int_{\pa\Omega}  \frac{\alpha}{\beta}   |\phi |^2 \div \xi \, d\H^{n-1}(x) \nonumber
 \\
& \qquad\qquad 
 -2\eps \int_\Omega \na {\phi }\otimes\na\phi : D\xi \, dx -  \int_{\pa\Omega}\frac{\alpha}{\beta}    {\phi}^2  n\otimes n : D \xi \,   d\H^{n-1}(x).\label{eq:dJ}
\end{align}
\end{lemma}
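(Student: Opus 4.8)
\emph{Overall strategy.} The plan is to prove both identities by a direct integration by parts, exploiting the two equivalent forms of \eqref{eq:phi0} (with $\sigma=1$): the strong form $\rho=\phi-\eps^2\Delta\phi$ and the rearrangement $\eps\Delta\phi=\eps^{-1}(\phi-\rho)$. Substituting $\na\rho=\na\phi-\eps^2\na(\Delta\phi)$ into the left-hand side splits it as
\[
-\eps^{-1}\int_\Omega(1-2\phi)\,\xi\cdot\na\phi\,dx+\eps\int_\Omega(1-2\phi)\,\xi\cdot\na(\Delta\phi)\,dx.
\]
The first integral is immediate: since $(1-2\phi)\na\phi=\na(\phi-\phi^2)$, one integration by parts yields $\eps^{-1}\int_\Omega(\phi-\phi^2)\div\xi\,dx$ plus a boundary term. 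The second integral is where the gradient-energy and Hessian contributions are produced, and it is handled by successive integrations by parts.

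\emph{Interior terms.} Integrating $\eps\int_\Omega(1-2\phi)\,\xi\cdot\na(\Delta\phi)\,dx$ by parts, first moving a derivative off $\Delta\phi$ and then off the resulting Hessian, generates exactly the energy--momentum structure of the Dirichlet energy: a term $-2\eps\int_\Omega\na\phi\otimes\na\phi:D\xi\,dx$, together with contributions proportional to $\eps\int_\Omega|\na\phi|^2\div\xi\,dx$ and to $\eps\int_\Omega(1-2\phi)\,\Delta\phi\,\div\xi\,dx$. At this last step I would re-substitute $\eps\Delta\phi=\eps^{-1}(\phi-\rho)$ to eliminate all second derivatives of $\phi$ from the interior. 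Collecting the interior $\div\xi$ contributions, the $\eps$-weighted ones combine to $+\eps\int_\Omega|\na\phi|^2\div\xi\,dx$, while the $\eps^{-1}$-weighted ones reduce, via the elementary algebraic identity $\phi^2+\rho-2\rho\phi=(1-\rho)\phi^2+\rho(1-\phi)^2$, to $\eps^{-1}\int_\Omega[(1-\rho)\phi^2+\rho(1-\phi)^2]\div\xi\,dx$. This accounts for all the interior terms of \eqref{eq:dJN}--\eqref{eq:dJ}. As a structural consistency check one may note that the whole left-hand side equals twice the first variation $\frac{d}{ds}\J_\eps(\rho_s)|_{s=0}$ along the transport $\pa_s\rho_s+\na\rho_s\cdot\xi=0$ (using the self-adjointness of the map $\rho\mapsto\phi$), and that since $\phi$ is the critical point of the $\phi$-form \eqref{eq:Jepsd2} of the energy, this first variation is an inner variation of a Dirichlet-type energy, which explains heuristically why the tensor $|\na\phi|^2\div\xi-2\,\na\phi\otimes\na\phi:D\xi$ appears.

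\emph{Boundary terms: the main obstacle.} The delicate part, and the heart of the computation, is the collection and simplification of the boundary integrals left behind by the integrations by parts above; several of them carry a second derivative of $\phi$ or the singular factor $\eps^{-1}$, so they cannot be discarded and must be made to combine. I would simplify them by decomposing $\na\phi$ (and its normal trace) into tangential and normal parts on $\pa\Omega$, invoking the boundary condition in the form $\eps\,\na\phi\cdot n=-\tfrac{\alpha}{\beta}\phi$ when $\beta\neq0$ (respectively $\phi=0$ when $\beta=0$, and $\na\phi\cdot n=0$ when $\alpha=0$), and performing tangential integration by parts on the closed surface $\pa\Omega$ (which leaves no further boundary terms). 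The normalization $\xi\cdot n=0$ on $\pa\Omega$ — the regime in which the lemma is applied in Proposition \ref{prop:firstvar} — is used here so that $\xi$ induces a deformation tangent to $\pa\Omega$ and the surviving integrand organizes into the surface-divergence form $\div\xi-n\otimes n:D\xi$.

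\emph{Conclusion of the boundary analysis.} In the Neumann case $\alpha=0$ and the Dirichlet case $\beta=0$, the boundary contributions cancel completely, giving \eqref{eq:dJN}. In the genuine Robin case $\beta\neq0$, after replacing each normal derivative by $-\tfrac{\alpha}{\beta\eps}\phi$, the remaining boundary integrals assemble into $\int_{\pa\Omega}\tfrac{\alpha}{\beta}\phi^2\bigl(\div\xi-n\otimes n:D\xi\bigr)\,d\H^{n-1}$, which is precisely the boundary part of \eqref{eq:dJ}. I expect this boundary bookkeeping to be the only nonroutine aspect: keeping track of the $\eps^{-1}$ and Hessian contributions on $\pa\Omega$, and recognizing the shape-operator terms that arise when differentiating the unit normal, is where care is required, whereas everything in the interior is straightforward integration by parts and algebra.
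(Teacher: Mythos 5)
Your proposal is correct and follows essentially the same route as the paper's proof: both rest on the equation $\rho=\phi-\eps^2\Delta\phi$, successive integrations by parts in which the hypothesis $\xi\cdot n=0$ (which you rightly identify as the implicit standing assumption) kills the singular and Hessian boundary terms, the algebraic identity $\phi^2+\rho-2\rho\phi=(1-\rho)\phi^2+\rho(1-\phi)^2$, and, in the Robin case, the substitution $\eps\,\na\phi\cdot n=-\tfrac{\alpha}{\beta}\phi$ followed by tangential integration by parts on the closed surface $\pa\Omega$ to produce $\tfrac{\alpha}{\beta}\int_{\pa\Omega}\phi^2\left(\div\xi-n\otimes n:D\xi\right)d\H^{n-1}$. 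The only difference is cosmetic: you substitute the PDE into $\na\rho$ first and then integrate by parts, whereas the paper first moves the derivative off $\rho$ and then substitutes, and both orderings lead to the identical intermediate identity with the surviving boundary term $2\eps\int_{\pa\Omega}(\na\phi\cdot n)(\na\phi\cdot\xi)\,d\H^{n-1}$.
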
 
{\bf Idea of the proof of Proposition \ref{prop:firstvar}:} 
While the proof of this Proposition \ref{prop:firstvar} may appear long and technical, the idea is quite simple. 
We recall that when   $\alpha=0$ (Neumann boundary condition), we have (see \eqref{eq:Jepsd2}):
\begin{align*}
\J_\eps (\rho^\eps) 
 & = \frac{\eps^{-1}} {2}  \int_\Omega  (1-\rho^\eps ) (\phi^\eps)^2 + \rho^\eps(1- \phi^\eps)^2\  \, dx  + 
\frac{ \eps}{2}  \int_\Omega |\na {\phi^\eps}| ^2\, dx\\
 & =  \frac{ 1} {2} \int _{\Omega}u_\eps^2 + v_\eps^2 \, dx.
 \end{align*}
where we denoted $u_\eps =  \eps^{-1/2} \left[ (1-\rho^\eps ) (\phi^\eps)^2 + \rho^\eps(1- \phi^\eps)^2\right] ^{1/2} $ and $v_\eps = \eps^{1/2}  |\na {\phi^\eps}|$.
The boundedness of the energy implies that $u_\eps$ and $v_\eps$ are bounded in $L^2$ and 
in order to pass to the limit in the first two terms in \eqref{eq:dJN}, we need to show the convergence of 
$ \int [u_\eps^2+v_\eps^2] \div\xi\, dx$.

Using the notations of Proposition \ref{prop:rhostrong}, and in particular the function $F$ such that $F'(\phi) = 2 \min(\phi,1-\phi)$, we now write
\begin{equation}\label{eq:kjhkhjb} 
|\na F(\phi^\eps)| =  2 \min(\phi^\eps,1-\phi^\eps)|\na {\phi^\eps}| \leq 2 u_\eps v_\eps \leq u_\eps^2 + v_\eps^2
\end{equation}
Since the limit  $\rho$ of $\rho^\eps$ is a characteristic function and we know that $\phi^\eps \to \rho $, 
we have $F(\phi^\eps) \to F(\rho) = \frac 1 2 \rho$ in $L^1$ (see the proof of Proposition \ref{prop:rhostrong}), and so 
$$\liminf_{\eps\to 0}  \int_\Omega  |\na F(\phi^\eps)| \, dx \geq \frac{1}{2} \int_\Omega |\na \rho|.$$
On the other hand, the assumption of convergence of the energy, \eqref{eq:cvass}, implies
$$ \lim_{\eps\to 0} \int _{\Omega}u_\eps^2 + v_\eps^2 \, dx = 2 \J_0(\rho) = \frac 1 2 \int |\na \rho|.$$
Together, these inequalities imply that there is equality in \eqref{eq:kjhkhjb} 
when $\eps\to0$, which means that $ u_\eps^2 + v_\eps^2 -|\na F(\phi^\eps)|  \to 0$ in $L^1$
 and that $\lim_{\eps\to0} \int_\Omega  |\na F(\phi^\eps)| \, dx = \frac 1 2 \int |\na \rho|$.
A Classical result (see Proposition \ref{prop:BV}) now implies that
$$ \lim_{\eps\to 0 } \int_\Omega [u_\eps^2+v_\eps^2] \div\xi\, dx = \lim_{\eps\to 0 } \int_\Omega |\na F(\phi^\eps)| \div\xi\, dx=  \frac 1 2 \int_\Omega |\na \rho| \div \xi.$$

%\textcolor{blue}{ and $\frac{\eps^{-1}} {2}  \int_\Omega \rho^\eps (1-\rho^\eps)  \, dx \to 0$ which implies that $\frac{\eps^{-1}} {2}  \int_\Omega \rho^\eps (1-\rho^\eps) \div \xi \, dx \to0$}

To pass to the limit in the last term in \eqref{eq:dJN}, we note that the (asymptotic) equality in Young's inequality in 
\eqref{eq:kjhkhjb}  also implies that $u_\eps- v_\eps \to 0$ in $L^2$ and so $2\eps^{-1} |\na {\phi^\eps}|^2 = 2 v_\eps^2\sim u_\eps^2 + v_\eps^2 \sim |\na F(\phi^\eps)|$
which proves the convergence of $2\eps^{-1} |\na {\phi^\eps}|^2 $.
A simple (if somewhat technical) result (see Proposition   \ref{prop:measure}) then shows that the convergence of $\eps^{-1} |\na {\phi^\eps}|^2$ implies that of $\eps^{-1} \na {\phi^\eps}\otimes\na {\phi^\eps}$.

Additional care will be needed to take care of the boundary condition when $\alpha\neq 0$, which is why we will first give the detailed proof for Neumann boundary conditions, then Dirichlet conditions (which requires extending $\phi^\eps$ to $\R^d$ by $0$) and finally general Robin boundary conditions (which combine the Neumann and Dirichlet case).

%\begin{lemma}\label{lem:eta}
%Given a measure $\mu$ and functions $g$, $h$ in $L^2(d\mu)$ and $f\in L^1(d\mu)$, if
%$ 0\leq f \leq 2gh $ and $\int g^2+h^2\, d\mu \leq \int f d\mu + \eta$, then
%$$
%\int |f - (g^2+h^2)| d\mu  \leq  
%  \eta
%$$
%and 
%$$
% \left| \int f d\mu - 2 \int g^2\, d\mu\right| \leq C \sqrt{\| f\|_{L^1(d\mu)}} \sqrt \eta
%$$
%\end{lemma}
%When $\eta=0$, the result follows from the equality in Young's inequality $2gh\leq g^2+h^2$.   

%Note in particular that \eqref{lem:firstvar} implies the following result which was used to prove the boundedness of the pressure $\na q^{\eps}$
%\begin{corollary}\lablel{eq:bhf}
%Given $\rho$ a characteristic function and $\phi$ solution of \eqref{eq:phi0} we have
%$$
%\left|
%\eps^{-1} \int_\Omega (1-2\phi^\eps) \xi \cdot \na \rho \right|
%\leq C \J_\eps(\rho) \| D\xi\|_{L^\infty(\Omega)}
%$$
%\end{corollary}

\begin{proof}[Proof of Proposition \ref{prop:firstvar}]
We note that \eqref{eq:dJ} together with \eqref{eq:Jepsd} immediately imply  \eqref{eq:bd}.

The difficult part of the proof is, of course, to establish the limit \eqref{eq:limitdJ}, and we will first give the proof in the simpler case of {\bf Neumann boundary conditions} ($\alpha=0$). We have to pass to the limit in \eqref{eq:dJN}.
%Following the proof of the $\Gamma$ convergence of $\J_\eps$ from \cite{MW}, we
% introduce the function
%$$ F(t) = \int_0^t 2 \min(\tau,1-\tau)\, d\tau = \begin{cases} t^2 & \mbox{ for } 0\leq t\leq 1/2 \\ 2t-t^2 -\frac 1 2 & \mbox{ for } 1/2 \leq t\leq 1\end{cases} .$$
As above, we use the function $F$ such that $F'(\phi) = 2 \min(\phi,1-\phi)$ and note that 
 $$F'(\phi)\leq 2 \left[ (1-\rho ) \phi ^2 + \rho (1- \phi)^2\right] ^{1/2} 
 $$ 
when $0\leq \rho\leq 1$.
 We thus have 
 $|\na F(\phi^\eps)| \leq 2 |\na {\phi^\eps}| \left[ (1-\rho ^\eps) (\phi^\eps) ^2 + \rho^\eps (1- \phi^\eps)^2\right] ^{1/2}  $ and so
 \begin{equation}\label{eq:young}
 |\na F(\phi^\eps)| \leq 2 u_\eps v_\eps \leq u_\eps^2 + v_\eps^2 - (u_\eps-v_\eps)^2,
\end{equation}
where 
$$
  u_\eps : =  \eps^{-1/2} \left[ (1-\rho ^\eps) (\phi^\eps) ^2 + \rho^\eps (1- \phi^\eps)^2\right] ^{1/2}\quad  \mbox{ and } \quad  v_\eps =\eps^{1/2} |\na {\phi^\eps}| .
$$
%In particular, we have  $u_\eps^2 + v_\eps^2-|\na F(\phi^\eps)|\geq 0$ and 

Next, the strong convergence of $\rho^\eps$ and \eqref{eq:ghj1} imply that $F(\phi^\eps) $ converges to $F(\rho)$ strongly in $L^1$. Since $\rho$ is a characteristic function and $F(0)=0$, $F(1) =1/2$, we deduce $F(\phi^\eps) \to F(\rho)= \frac 1 2 \rho$ strongly  in $L^1$ and so:
$$ \liminf_{\eps\to 0} \int_\Omega |\na F(\phi^\eps)| \, dx \geq \frac 1 2 \int_\Omega |\na\rho|.$$
On the other hand,  the convergence assumption \eqref{eq:cvass} implies 
$$ \int_\Omega  u_\eps^2 + v_\eps^2\, dx = 2 \J_\eps (\rho^\eps) \to \frac 1 2 \int_\Omega |\na\rho|.$$
Inequality \eqref{eq:young} thus implies:
\begin{align}
& u_\eps^2 + v_\eps^2 - |\na F(\phi^\eps)| \to 0 \quad \mbox{ in } L^1(\Omega) \label{eq:conv2} \\
&  \int_\Omega   |\na F(\phi^\eps)| \, dx  \to \frac 1 2 \int_\Omega |\na\rho| \label{eq:conv1}\\
& u_\eps-v_\eps \to 0 \quad \mbox{ in } L^2(\Omega). \label{eq:conv3}
\end{align}
These facts  allow us to pass to the limit in the first two terms of \eqref{eq:dJN}.
Indeed, using first the definition of $u^\eps$ and $v^\eps$, then the limit \eqref{eq:conv2} and finally  \eqref{eq:conv1} (together with Proposition \ref{prop:BV}), we can write:
\begin{align}
\lim_{\eps\to 0} 
\eps^{-1} \int_\Omega (\rho^\eps-\phi^\eps)^2  \div \xi \, dx + \eps  \int_\Omega   |\na {\phi^\eps}|^2 \div \xi\, dx 
&  =\lim_{\eps\to 0}  \int_\Omega ( u_\eps^2 + v_\eps^2) \div \xi \, dx \nonumber\\
&  =\lim_{\eps\to 0}  \int_\Omega |\na F(\phi^\eps)|  \div \xi \, dx\nonumber \\
 &= \frac 1 2 \int_\Omega \div \xi |\na\rho| .\label{eq:lim1}
\end{align}
Furthermore, \eqref{eq:conv2} and \eqref{eq:conv3} yields:
\begin{equation}\label{eq:conv4}
2u_\eps^2 - |\na F(\phi^\eps)| \to 0 \quad \mbox{ in } L^1(\Omega) 
\end{equation}
which we use to pass to the limit in the term involving $\na {\phi^\eps}\otimes\na\phi^\eps$. Indeed, we can write
$$
2\eps \int_\Omega \pa_i {\phi^\eps} \pa_j {\phi^\eps} \pa_i\xi_j \, dx
 = 2\eps \int_\Omega \frac{\pa_i {\phi^\eps}}{|\na {\phi^\eps}|}\frac{ \pa_j {\phi^\eps} }{|\na {\phi^\eps}|} \pa_i\xi_j \, {|\na {\phi^\eps}|^2}dx
 =  \int_\Omega \frac{\pa_i {\phi^\eps}}{|\na {\phi^\eps}|}\frac{ \pa_j {\phi^\eps} }{|\na {\phi^\eps}|} \pa_i\xi_j \, 2u_\eps^2 dx
$$ 
and since $\frac{\pa_i {\phi^\eps}}{|\na {\phi^\eps}|}\frac{ \pa_j {\phi^\eps} }{|\na {\phi^\eps}|} \pa_i\xi_j $ is bounded in $L^\infty$, \eqref{eq:conv4} implies that
$$
\lim_{\eps\to 0} 
2\eps \int_\Omega \pa_i {\phi^\eps} \pa_j {\phi^\eps} \pa_i\xi_j \, dx
=
\lim_{\eps\to 0} 
 \int_\Omega \frac{\pa_i {\phi^\eps}}{|\na {\phi^\eps}|}\frac{ \pa_j {\phi^\eps} }{|\na {\phi^\eps}|} \pa_i\xi_j \, |\na F(\phi^\eps)|  dx.
$$
Using the fact that $F'(\phi)\geq 0$, we can also write
$$
\lim_{\eps\to 0} 
2\eps \int_\Omega \pa_i {\phi^\eps} \pa_j {\phi^\eps} \pa_i\xi_j \, dx
=
\lim_{\eps\to 0} 
 \int_\Omega \frac{\pa_i F( {\phi^\eps})}{|\na F( {\phi^\eps})|}\frac{ \pa_j F(\phi^\eps) }{|\na F(\phi^\eps)|} \pa_i\xi_j \, |\na F(\phi^\eps)|  dx.
$$
and using \eqref{eq:conv1} and Proposition \ref{prop:measure} we deduce
\begin{equation}\label{eq:lim2}
\lim_{\eps\to 0} 
2\eps \int_\Omega \pa_i {\phi^\eps} \pa_j {\phi^\eps} \pa_i\xi_j \, dx
=\frac 1 2  \int_\Omega \frac{\pa_i \rho }{|\na \rho|}\frac{ \pa_j \rho }{|\na \rho |} \pa_i\xi_j \, |\na \rho|  
\end{equation}
   
\medskip
Using \eqref{eq:dJN} together with \eqref{eq:lim1} and \eqref{eq:lim2} we get
\begin{align*}
\lim_{\eps\to 0}
 -\eps^{-1} \int_\Omega (1-2\phi^\eps) \xi \cdot \na \rho  %= \lim_{\eps\to 0}\left[ \eps^{-1} \int_\Omega (\rho-\phi^\eps)^2  \div \xi \, dx + \eps  \int_\Omega   |\na {\phi^\eps}|^2 \div \xi\, dx  
% -2\eps \int_\Omega \na {\phi^\eps}\otimes\na\phi^\eps : D\xi \, dx  \right]\\
 & 
 = \frac 1 2 \int_\Omega \div \xi |\na\rho| 
   +\frac 1 2  \int_\Omega \frac{\pa_i \rho }{|\na \rho|}\frac{ \pa_j \rho }{|\na \rho |} \pa_i\xi_j \, |\na \rho|  
   \end{align*}
   which gives \eqref{eq:limitdJ} in the case of Neumann boundary condition $\alpha=0$ (and when $\sigma=1$) 
   
\medskip

For the case of {\bf Dirichlet conditions} ($\beta=0$), we proceed similarly, but we first extend $\phi^\eps$ to $\R^d$ by setting it equal to $0$ in $\R^d\setminus\Omega$. Denoting $\bar {\phi^\eps}$ this extension, we find (using 
inequality \eqref{eq:young} and the Dirichlet boundary condition for $\phi^\eps$):
% \begin{equation}\label{eq:young3}
% |\na (F(\bar {\phi^\eps})| \leq 2 \bar u_\eps \bar v_\eps \leq \bar u_\eps^2 + \bar v_\eps^2, \mbox{ in } \R^d \qquad \quad  \mbox{ where } 
%  \bar u_\eps : = \eps^{1/2} |\na \bar {\phi^\eps}| \mbox{ and }  \bar v_\eps = \eps^{-1/2} |\bar {\phi^\eps}-\bar \rho^\eps|
%\end{equation}
%and using the Dirichlet boundary condition for $\phi^\eps$, we get
$$\int_{\R^d} |\na F(\bar {\phi^\eps})| \, dx = \int_{\Omega} |\na F( {\phi^\eps})| \, dx 
\leq %\int_{\R^d}  \bar u_\eps^2 + \bar v_\eps^2\, dx =
 \int_\Omega u_\eps^2 +   v_\eps^2\, dx =2\J_\eps(\rho^\eps).$$
The lower semicontinuity of the BV norm and assumption \eqref{eq:cvass} give
$$\liminf_{\eps\to 0} \int_{\R^d} |\na (F(\bar {\phi^\eps})| \, dx \geq \int_{\R^d} |\na F(\bar \rho)| =\frac 1 2 \int_{\R^d} |\na \bar \rho| 
 %=\frac 1 2 \left( \int_\Omega |\na \rho|  + \int_{\pa\Omega} \rho d\H^{n-1}(x) \right) 
 = \lim_{\eps\to 0 } 2\J_\eps(\rho^\eps) .
 $$
 Indeed, when $\beta=0$ (and $\sigma=1$), we have
 $$\J_0(\rho) = \frac{1}{4}\left[ \int_\Omega |\na \rho| + \int_{\pa \Omega} \rho(x) d\H^{n-1}(x)\right] = \frac 1 4  \int_{\R^d} |\na \bar \rho| .
$$
 We can thus proceed as before to show that
 \begin{align*}
& u_\eps^2 + v_\eps^2 - |\na F( {\phi^\eps})| \to 0 \quad \mbox{ in } L^1(\Omega) \\
&  \int_{\R^d}   |\na F(\bar {\phi^\eps})| \, dx  \to \frac 1 2 \int_{\R^d} |\na\rho| \\
&   u_\eps-  v_\eps \to 0 \quad \mbox{ in } L^2(\Omega). 
\end{align*}
These are the same convergences as \eqref{eq:conv2}-\eqref{eq:conv3}, except for \eqref{eq:conv1} which involves the extension $\bar\phi^\eps$.
We can now write:
\begin{align*}
\lim_{\eps\to0} 
& \eps^{-1} \int_\Omega \left[ (1-\rho ^\eps ) ( \phi^\eps) ^2 + \rho ^\eps(1- \phi^\eps )^2 \right] \div \xi \, dx + \eps  \int_\Omega   |\na {\phi^\eps}|^2 \div \xi\, dx \\
% &=\eps^{-1} \int_{\R^d} (\bar \rho^\eps-\bar {\phi^\eps})^2  \div \xi \, dx + \eps  \int_{\R^d}   |\na \bar {\phi^\eps}|^2 \div \xi\, dx \\
&\qquad\qquad =\lim_{\eps\to0}  \int_{\Omega} \div \xi  (  u_\eps^2 +  v_\eps^2) \, dx\\
&\qquad\qquad =\lim_{\eps\to0}  \int_{\Omega}\div \xi   |\na F( {\phi^\eps})| \, dx \\
&\qquad \qquad=\lim_{\eps\to0}  \int_{\R^d}\div \xi   |\na F( \bar {\phi^\eps})| \, dx\\
 &\qquad \qquad= \frac 1 2 \int_{\R^d} \div \xi |\na\rho|  = \frac 1 2 \left( \int_{\Omega} \div \xi |\na\rho|  +\int_{\pa\Omega}\rho \,\div \xi \, d\H^{n-1}(x) \right) .
\end{align*}
Similarly, we can show (as before)$$
2\eps \int_\Omega \na {\phi^\eps}\otimes\na\phi^\eps : D\xi \, dx
\longrightarrow
\frac 1 2 \int_{\R^d} \left( \frac{\na \bar \rho}{|\na \bar\rho|}\otimes\frac{\na\bar \rho}{|\na\bar \rho|} : D\xi \right) |\na \bar\rho|$$
where
$$
 \int_{\R^d}\left( \frac{\na\bar \rho}{|\na \bar\rho|}\otimes\frac{\na\bar \rho}{|\na \bar\rho|} : D\xi \right) |\na \bar \rho|
=  \int_{\Omega} \left( \nu \otimes\nu : D\xi \right)   |\na \rho|
+
 \int_{\pa\Omega} \left( n\otimes n : D\xi \right) \rho \, d\H^{n-1}(x).$$
This gives the result when $\beta=0$ (Dirichlet boundary conditions).

\medskip
\medskip

For the case of {\bf Robin conditions}, we need to combine the two cases above.
We first choose  $\vphi:\R^d \to [0,1]$ continuous function such that 
$$  \int_{\pa\Omega }|\chi_E- \vphi |\, d\H^{n-1}\leq \delta.$$
%We then  write, as in the Neumann boundary case
%$$ |\na F(\phi^\eps)| (1-\vphi) \leq 2 u_\eps v_\eps (1-\vphi) \leq (u_\eps^2 + v_\eps^2 - (u_\eps-v_\eps)^2 ) (1-\vphi) .$$
%and (as in the Dirichlet case)
%$$
%Since $\alpha/\eta\geq 0$, we can:
%\begin{align*}
%\J_\eps (\rho^\eps) 
% & =  \eps^{-1}\int _{\Omega}(\rho^\eps-\phi^\eps)^2 \, dx  + 
% \eps  \int_\Omega |\na {\phi^\eps}| ^2\, dx + \int_{\pa \Omega}\frac{\alpha}{\beta} |\phi^\eps|^2 \, d\H^{n-1}(x) \\
% & \geq   \eps^{-1}\int _{\Omega}(\rho^\eps-\phi^\eps)^2 \, dx  + 
% \eps  \int_\Omega |\na {\phi^\eps}| ^2\, dx \\
%& \geq  \int_\Omega |\na F(\phi^\eps)| \, dx
% \end{align*}
Introducing the function $G(t) = \frac \alpha \beta t^2 - F(t)$, we then write:
\begin{align*}
 \int_{\R^d} |\na F(\overline{\phi^\eps})| \vphi \, dx +  \int_{\pa\Omega }G(\phi^\eps)  \vphi d\H^{n-1}
& =  \int_\Omega |\na F(\phi^\eps)| \vphi \, dx + \int_{\pa\Omega } F(\phi^\eps) \vphi d\H^{n-1} +  \int_{\pa\Omega }G(\phi^\eps)  \vphi d\H^{n-1} \\
& =  \int_\Omega |\na F(\phi^\eps)| \vphi \, dx + \int_{\pa\Omega }  \frac \alpha \beta  |\phi^\eps|^2 \vphi d\H^{n-1} 
\end{align*}
which leads to
\begin{align}
&  \int_{\Omega} |\na F( {\phi^\eps})| (1-\vphi ) \, dx + \int_{\R^d} |\na F(\overline{\phi^\eps})| \vphi \, dx +  \int_{\pa\Omega }G(\phi^\eps)  \vphi d\H^{n-1} +\int_{\pa\Omega }  \frac \alpha \beta  |\phi^\eps|^2 (1-\vphi) d\H^{n-1} \nonumber \\
 &\qquad =  \int_\Omega |\na F(\phi^\eps)| \, dx + \int_{\pa\Omega }  \frac \alpha \beta  |\phi^\eps|^2  d\H^{n-1}.\label{eq:fdsfnkj}
 \end{align}
 The right hand side satisfies (with the same notations as above):
 \begin{align}
\int_\Omega |\na F(\phi^\eps)| \, dx + \int_{\pa\Omega }  \frac \alpha \beta  |\phi^\eps|^2  d\H^{n-1}
 & \leq   \int_\Omega u_\eps^2 + v_\eps^2 - (u_\eps-v_\eps)^2    \, dx + \int_{\pa\Omega }  \frac \alpha \beta  |\phi^\eps|^2  d\H^{n-1} \nonumber \\
& = 2\J_\eps(\rho^\eps)-  \int_\Omega  (u_\eps-v_\eps)^2    \, dx \label{eq:bhkj}
\end{align}
so in order to proceed as before, we need to show that the $\liminf$ of the left hand side is greater than  $2 \J_0(\rho)$.
For this, we notice that the function
%On the other hand, we can use the extension of $\overline{\phi^\eps}$ by $0$ as in the Dirichlet case:
%We introduce the function
$$G(t) = \frac \alpha \beta t^2 - F(t)= \begin{cases} (\frac \alpha \beta -1 )t^2 & \mbox{ for } 0\leq t\leq 1/2 \\ (\frac \alpha \beta +1) t^2-2t+ \frac 1 2 & \mbox{ for } 1/2 \leq t\leq 1\end{cases} $$  
satisfies
$G(t) \geq \min \{ 0, \frac \alpha{\alpha+\beta}-\frac1  2\}$ for all $t\in[0,1]$, so that we can write
\begin{align}
& \liminf_{\eps\to 0}
 \int_{\Omega} |\na F( {\phi^\eps})| (1-\vphi ) \, dx +  \int_{\R^d} |\na F(\overline{\phi^\eps})| \vphi \, dx +  \int_{\pa\Omega }G(\phi^\eps)  \vphi d\H^{n-1}+\int_{\pa\Omega }  \frac \alpha \beta  |\phi^\eps|^2 (1-\vphi) d\H^{n-1}  \nonumber \\
& \qquad\geq  \int_\Omega |\na F(\rho)| (1-\vphi)\, dx +  \int_{\R^d} |\na F(\bar \rho) | \vphi \, dx +  \int_{\pa\Omega }  \min \left\{0, \frac \alpha{\alpha+\beta}-\frac1  2\right\} \vphi d\H^{n-1} +0\nonumber \\
&\qquad \geq 
 \int_\Omega\frac 1 2 |\na \rho | \, dx +  \int_{\pa\Omega }\frac 1 2 \rho \vphi \, d\H^{n-1} +  \int_{\pa\Omega }  \min \left\{0, \frac \alpha{\alpha+\beta}-\frac1  2\right\}\vphi d\H^{n-1}\nonumber  \\
&\qquad\geq 
 \int_\Omega\frac 1 2 |\na \rho | \, dx +  \int_{\pa\Omega }  \min \left\{ \frac 1 2, \frac \alpha{\alpha+\beta}\right\}\rho  d\H^{n-1} -C\delta \nonumber 
\\
& \qquad  = 2 \J_0(\rho) -C\delta \label{eq:liminfgh}
\end{align}
thanks to our particular choice of $\vphi$.
Going back to \eqref{eq:fdsfnkj}, we see that we just showed that
$$\liminf_{\eps\to 0} \int_\Omega |\na F(\phi^\eps)| \, dx + \int_{\pa\Omega }  \frac \alpha \beta  |\phi^\eps|^2  d\H^{n-1} \geq  
\J_0(\rho) -C\delta 
$$
and since this holds for any $\delta>0$, we get
$$\liminf_{\eps\to 0} 
\int_\Omega |\na F(\phi^\eps)| \, dx + \int_{\pa\Omega }  \frac \alpha \beta  |\phi^\eps|^2  d\H^{n-1} \geq  
\J_0(\rho).
$$

We can now conclude as in the previous cases:
Using \eqref{eq:bhkj} and the assumption that $\lim_{\eps\to 0}  \J_\eps(\rho^\eps) = \J_0(\rho)$ to conclude that
\begin{align}
& u_\eps^2 + v_\eps^2 - |\na F(\phi^\eps)| \to 0 \quad \mbox{ in } L^1(\Omega) \label{eq:conv23} \\
&  \int_\Omega   |\na F(\phi^\eps)| \, dx  \to \frac 1 2 \int_\Omega |\na\rho| \label{eq:conv13}\\
& u_\eps-v_\eps \to 0 \quad \mbox{ in } L^2(\Omega). \label{eq:conv33}
\end{align}
Furthermore, using \eqref{eq:liminfgh}, we also get
$$ \limsup_{\eps\to0} \left|  \int_{\Omega} |\na F( {\phi^\eps})| (1-\vphi ) \, dx -  \frac 1 2 \int_\Omega |\na \rho| (1-\vphi) \right|\leq  C\delta,$$
$$ \limsup_{\eps\to0} \left| \int_{\R^d} |\na F(\overline{\phi^\eps})| \vphi \, dx - \frac 1 2  \int_{\R^d}|\na  \rho |\vphi  \right|\leq  C\delta $$
$$ \limsup_{\eps\to0} \left| 
\int_{\pa\Omega }G(\phi^\eps)  \vphi d\H^{n-1}
- \int_{\pa\Omega }  \min \left\{0, \frac \alpha{\alpha+\beta}-\frac1  2\right\} \vphi d\H^{n-1} 
 \right|\leq  C\delta $$
and 
$$
 \limsup_{\eps\to0}  
\int_{\pa\Omega }  \frac \alpha \beta  |\phi^\eps|^2 (1-\vphi) d\H^{n-1}  \leq  C\delta .
 $$
We then write (using \eqref{eq:conv23})
\begin{align}
 & \lim_{\eps\to0} \eps^{-1} \int_\Omega [(1-\rho^\eps)(\phi^\eps)^2 + \rho^\eps (1-\phi^\eps)^2] \div \xi \, dx + \eps  \int_\Omega   |\na {\phi^\eps}|^2 \div \xi\, dx +  \int_{\pa\Omega}  \frac{\alpha}{\beta}   |\phi^\eps|^2 \div \xi \, d\H^{n-1}(x)\nonumber \\
% &=\eps^{-1} \int_{\R^d} (\bar \rho^\eps-\bar {\phi^\eps})^2  \div \xi \, dx + \eps  \int_{\R^d}   |\na \bar {\phi^\eps}|^2 \div \xi\, dx \\
& \qquad =\lim_{\eps\to0}  \int_{\Omega}   (  u_\eps^2 +  v_\eps^2)\div \xi \, dx+  \int_{\pa\Omega}  \frac{\alpha}{\beta}   |\phi^\eps|^2 \div \xi \, d\H^{n-1}(x)\nonumber \\
& \qquad =\lim_{\eps\to0}  \int_{\Omega} |\na F( {\phi^\eps})| \div \xi  \, dx +  \int_{\pa\Omega}  \frac{\alpha}{\beta}   |\phi^\eps|^2 \div \xi \, d\H^{n-1}(x)\label{eq:jhkg}
\end{align}
and using the same function $\vphi$ as above, we decompose the integral in \eqref{eq:jhkg}
 as follows:
\begin{align*}
&  \int_{\Omega} |\na F( {\phi^\eps})|\div \xi   \, dx +  \int_{\pa\Omega}  \frac{\alpha}{\beta}   |\phi^\eps|^2 \div \xi \, d\H^{n-1}(x)\\
& \qquad =   \int_{\Omega} |\na F( {\phi^\eps})|   \vphi \div \xi\, dx +  \int_{\pa\Omega}  \frac{\alpha}{\beta} |\phi^\eps|^2 \vphi   \div \xi \, d\H^{n-1}(x)\\
&\qquad\qquad + \int_{\Omega} |\na F( {\phi^\eps})|(1-\vphi)  \div \xi \, dx +  \int_{\pa\Omega}  \frac{\alpha}{\beta} |\phi^\eps|^2 (1-\vphi)  \div \xi \, d\H^{n-1}(x)\\
& \qquad =   \int_{\R^d}|\na F(\overline  {\phi^\eps})|  \vphi \div \xi \, dx +  \int_{\pa\Omega} G(\phi^\eps) \vphi  \div \xi \, d\H^{n-1}(x)\\
&\qquad\qquad + \int_{\Omega} |\na F( {\phi^\eps})|(1-\vphi)  \div \xi \, dx +  \int_{\pa\Omega}  \frac{\alpha}{\beta}  |\phi^\eps|^2 (1-\vphi) \div \xi \, d\H^{n-1}(x)
\end{align*}
We deduce
\begin{align*}
\limsup_{\eps\to0}
& \left| 
  \int_{\Omega} |\na F( {\phi^\eps})|\div \xi   \, dx +  \int_{\pa\Omega}  \frac{\alpha}{\beta}   |\phi^\eps|^2 \div \xi \, d\H^{n-1}(x) \right. \\
&   \quad \left.
  -  
   \frac 1 2 \int_\Omega |\na \rho| (1-\vphi) \div \xi-\frac 1 2  \int_{\R^d} |\na \rho| \vphi  \div \xi
-\int_{\pa\Omega }  \min \left\{0, \frac \alpha{\alpha+\beta}-\frac1  2\right\} \vphi \div \xi d\H^{n-1} \right|\leq C\delta
\end{align*}
that is
$$
\limsup_{\eps\to0}
\left| 
  \int_{\Omega} |\na F( {\phi^\eps})|\div \xi   \, dx +  \int_{\pa\Omega}  \frac{\alpha}{\beta}   |\phi^\eps|^2 \div \xi \, d\H^{n-1}(x)
   -\frac 1 2  \int_{\Omega} |\na \rho| \div \xi
-\int_{\pa\Omega }  \min \left\{\frac 1 2, \frac \alpha{\alpha+\beta} \right\} \vphi \div \xi d\H^{n-1} \right|\leq C\delta
$$
and the choice of $\vphi$ implies
$$
\limsup_{\eps\to0}
\left| 
  \int_{\Omega} |\na F( {\phi^\eps})|\div \xi   \, dx +  \int_{\pa\Omega}  \frac{\alpha}{\beta}   |\phi^\eps|^2 \div \xi \, d\H^{n-1}(x)
   -\frac 1 2  \int_{\Omega} |\na \rho| \div \xi
-\int_{\pa\Omega }  \min \left\{\frac 1 2, \frac \alpha{\alpha+\beta} \right\} \chi_E \div \xi d\H^{n-1} \right|\leq C\delta.
$$
Since the left hand side is independent of $\vphi$, we can take $\delta\to0$ and use \eqref{eq:jhkg} to get
\begin{align*}
& \lim_{\eps\to 0} \eps^{-1} \int_\Omega  [(1-\rho^\eps)(\phi^\eps)^2 + \rho^\eps (1-\phi^\eps)^2]  \div \xi \, dx + \eps  \int_\Omega   |\na {\phi^\eps}|^2 \div \xi\, dx +  \int_{\pa\Omega}  \frac{\alpha}{\beta}   |\phi^\eps|^2 \div \xi \, d\H^{n-1}(x) \\
& \qquad =
  \frac 1 2  \int_{\Omega} |\na \rho | \div \xi
+\int_{\pa\Omega }  \min \left\{\frac 1 2, \frac \alpha{\alpha+\beta} \right\} \rho \div \xi d\H^{n-1} 
\end{align*}

It then only remains to show that
\begin{align*}
& \lim_{\eps\to 0} 2\eps \int_\Omega \na {\phi^\eps}\otimes\na\phi^\eps : D\xi \, dx +  \int_{\pa\Omega}\frac{\alpha}{\beta}    {\phi^\eps}^2  n\otimes n : D \xi \,   d\H^{n-1}(x)\\
& \qquad =\frac1 2  \int_\Omega  \nu\otimes \nu :D\xi|\na \rho|
+ \min\left(\frac 1 2 ,\frac{\alpha}{\alpha+ \beta} \right) \int_{\pa\Omega}  n\otimes n :D\xi\  \rho\,  d\mathcal H^{n-1}(x)
\end{align*}
which can be easily done by combining the argument above with how that term was handled in the case of Dirichlet boundary conditions.

\medskip

\vspace{20pt}

\end{proof}

\begin{proof}[Proof of Lemma \ref{lem:firstvar}]
We write (using  the fact that $\xi\cdot n=0$ on $\pa\Omega$ and \eqref{eq:phi0}):
\begin{align*}
 -\eps^{-1} \int_\Omega (1-2\phi^\eps) \xi \cdot \na \rho&  = \eps^{-1} \int_\Omega \rho (1-2\phi^\eps) \div \xi \, dx - 2 \eps^{-1}  \int_\Omega \rho \na\phi^\eps\cdot\xi\,dx\\
& = \eps^{-1} \int_\Omega \rho (1-2\phi^\eps) \div \xi \, dx - 2 \eps^{-1}  \int_\Omega {\phi^\eps} \na\phi^\eps\cdot\xi\,dx
+2 \eps  \int_\Omega \Delta {\phi^\eps} \na\phi^\eps\cdot\xi\,dx\\
& = \eps^{-1} \int_\Omega \rho (1-2\phi^\eps) \div \xi \, dx + \eps^{-1}  \int_\Omega {\phi^\eps} ^2  \div \xi\,dx
+2 \eps  \int_\Omega \Delta {\phi^\eps} \na\phi^\eps\cdot\xi\,dx
\\
& = \eps^{-1} \int_\Omega [\rho -2\rho {\phi^\eps}+(\phi^\eps)^2] \div \xi \, dx+2 \eps  \int_\Omega \Delta {\phi^\eps} \na\phi^\eps\cdot\xi\,dx.
\end{align*}
For the first term, we write $\rho -2\rho {\phi^\eps}+(\phi^\eps)^2 =(1-\rho){\phi^\eps}^2+ \rho (1-\phi^\eps)^2$. For the second term, we note that
\begin{align*}
  \int_\Omega \Delta {\phi^\eps} \na\phi^\eps\cdot\xi\,dx 
  &  = -\int_\Omega \pa_i {\phi^\eps}  \pa_{ij}\phi^\eps \xi_j\, dx -\int_\Omega \pa_i {\phi^\eps}  \pa_{j}\phi^\eps\pa_i \xi_j\, dx + \int_{\pa\Omega}  \pa_i {\phi^\eps}  \pa_{j}\phi^\eps\xi_j \nu_i \, d\H^{n-1}(x)\\
  &  = -\int_\Omega \na \left(\frac { |\na {\phi^\eps}|^2}{2}\right) \cdot \xi\, dx  -\int_\Omega \na {\phi^\eps}\otimes\na\phi^\eps : D\xi \, dx + \int_{\pa\Omega}  \na {\phi^\eps} \cdot n \na {\phi^\eps}\cdot \xi \, d\H^{n-1}(x)\\
  &  = \frac1 2 \int_\Omega   |\na {\phi^\eps}|^2 \div \xi\, dx  -\int_\Omega \na {\phi^\eps}\otimes\na\phi^\eps : D\xi \, dx + \int_{\pa\Omega}  \na {\phi^\eps} \cdot n \na {\phi^\eps}\cdot  \xi\,  d\H^{n-1}(x)
\end{align*}
 We deduce:
 \begin{align*}
  -\eps^{-1} \int_\Omega (1-2\phi^\eps) \xi \cdot \na \rho
& = \eps^{-1} \int_\Omega [(1-\rho){\phi^\eps}^2+ \rho (1-\phi^\eps)^2]  \div \xi \, dx  + \eps  \int_\Omega   |\na {\phi^\eps}|^2 \div \xi\, dx \\
& \qquad\qquad 
 -2\eps \int_\Omega \na {\phi^\eps}\otimes\na\phi^\eps : D\xi \, dx + 2\eps \int_{\pa\Omega}  \na {\phi^\eps} \cdot n \na {\phi^\eps}\cdot  \xi \, d\H^{n-1}(x).
\end{align*}
When $\alpha=0$, we have  $\na {\phi^\eps} \cdot n=0$ on $\pa\Omega$ so the last term vanishes.  If $\beta=0$, then $\phi=0$ on $\pa \Omega$ and since $\xi$ is tangential to $\pa\Omega$, we have $ \na {\phi^\eps}\cdot  \xi =0$ on $\pa\Omega$ and the last term vanish also.
In both cases, we get formula \eqref{eq:dJN}.
\medskip

In the case of general Robin boundary conditions (with in particular $\beta\neq 0$), we can write the last term as:
\begin{align*}
2\eps \int_{\pa\Omega}  \na {\phi^\eps} \cdot n \na {\phi^\eps}\cdot  \xi \, d\H^{n-1}(x)
& = - 2 \frac{\alpha}{\beta}  \int_{\pa\Omega}   {\phi^\eps}  \na {\phi^\eps}\cdot  \xi \, d\H^{n-1}(x)\\
& = - \frac{\alpha}{\beta}  \int_{\pa\Omega}     \na |\phi^\eps|^2 \cdot  \xi \, d\H^{n-1}(x)
\end{align*}
Formula \eqref{eq:mc} applied to $\Sigma = \pa\Omega$ (since $\xi\cdot n =0$) gives
\begin{align*}
 \int_{\pa\Omega} \div({\phi^\eps}^2 \xi)\,  d\H^{n-1}(x) 
 &  =   \int_{\pa\Omega}  n\otimes n : D (\phi^2 \xi) \,  d\H^{n-1}(x)\\
 &  =   \int_{\pa\Omega} \phi^2  n\otimes n : D \xi \,  d\H^{n-1}(x).
\end{align*}
Writing $ \div({\phi^\eps}^2 \xi) = \na (|\phi^\eps|^2) \cdot \xi + |\phi^\eps|^2 \div \xi$, 
we deduce
\begin{align*}
2\eps \int_{\pa\Omega}  \na {\phi^\eps} \cdot n \na {\phi^\eps}\cdot  \xi \, d\H^{n-1}(x)
& = - \frac{\alpha}{\beta}  \int_{\pa\Omega}   {\phi^\eps}^2  n\otimes n : D \xi \,   d\H^{n-1}(x)
+ \frac{\alpha}{\beta}  \int_{\pa\Omega}     |\phi^\eps|^2 \div \xi \, d\H^{n-1}(x)
\end{align*}
and so
\begin{align*}
 -\eps^{-1} \int_\Omega (1-2\phi^\eps) \xi \cdot \na \rho
& =  \eps^{-1} \int_\Omega [(1-\rho){\phi^\eps}^2+ \rho (1-\phi^\eps)^2]   \div \xi \, dx   + \eps  \int_\Omega   |\na {\phi^\eps}|^2 \div \xi\, dx +\frac{\alpha}{\beta}  \int_{\pa\Omega}     |\phi^\eps|^2 \div \xi \, d\H^{n-1}(x)
 \\
& \qquad\qquad 
 -2\eps \int_\Omega \na {\phi^\eps}\otimes\na\phi^\eps : D\xi \, dx - \frac{\alpha}{\beta}  \int_{\pa\Omega}   {\phi^\eps}^2  n\otimes n : D \xi \,   d\H^{n-1}(x).
\end{align*}
which is \eqref{eq:dJ}.
\end{proof}

\medskip

\section{JKO Scheme and convergence of the discrete time approximation}
The main result of this paper can also be proved at the level of the discrete time approximation constructed in \cite{KMW}.
Such a result can be relevant to some numerical applications so we will state it here.
First, we briefly recall the construction of the JKO scheme:
As usual, $\P(\Omega)$ denotes the set of probability measures 
on $\Omega$ and we denote
\begin{equation}\label{eq:K}
K=\left\{ \rho\in \P(\Omega), \; \rho(x)\leq 1 \mbox{ a.e. in }\Omega\right\}
\end{equation} 
(in particular all $\rho\in K$ are absolutely continuous with respect to the Lebesgue measure and we can identify the measure with its density).
The set $\P(\Omega)$ is equipped with the usual Wasserstein distance, defined by
$$
W_2^2(\rho_1,\rho_2) = \inf_{\pi \in \Pi(\rho_1,\rho_2)}  \int_{\Omega\times\Omega} |x-y|^2 d\pi(x,y)
$$
where $\Pi(\rho_1,\rho_2)$ denotes the set of all probability measures $\pi \in \mathcal P(\Omega\times\Omega)$ with marginals $\rho_1$ and $\rho_2$.

The idea of the JKO scheme is to construct a time-discrete approximation of the solution by successive applications of a minimization problem: For a given initial data $\rho_{in}\in K$,  we fix a time step $\tau>0$ (destined to go to zero) and define the sequence $\rho^n $ by:
\begin{equation}\label{eq:JKO}
\rho^0=\rho_{in} , \qquad  \rho^n \in \mathrm{argmin}\left\{  \frac {1}{2\tau} W_2^2(\rho,\rho^{n-1}) +\F_\eps(\rho)\, ;\, \rho\in K\right\} \qquad \forall n\geq 1.
\end{equation}
%where $W_2(\mu,\nu)$ denotes the usual Wasserstein distance between two probability measures.

The fact that this problem has a minimizer is proved in \cite[Proposition 2.1]{KMW}.
Furthermore,  if $T^n$ is the unique optimal transport map from $\rho^n$ to $\rho^{n-1}$ (that is $T^n\#\rho^n=\rho^{n-1}$ and $W_2^2(\rho^n,\rho^{n-1}) = \int|x-T^n(x)|^2 \rho^n$), we define the velocity
$$ v^n(x) =\frac{x-T^n(x)}{\tau}$$
and the pressure variable $p^n(x)$ such that
%We also define the velocity $v^n$ and pressure $p^n$ such that
%$$
%h \int \rho^n (x)v^n (x) \cdot \na \zeta (x) \, dx =  \int [\rho^n(x)-\rho^{n-1}(x)]\, \zeta (x) \, dx + \mathcal O(\| D^2\zeta\|_\infty W_2^2(\rho^n,\rho^{n-1}))
%$$
%and
$$ 
\rho^n v^n = \eps^{-1}\rho^n \na \phi^n - \na p^n, \qquad p^n \in H^1_{\rho^n}.
$$
with $\phi^n$ solution of \eqref{eq:phi0}. 
The existence of $p^n$ is shown  in \cite[Proposition 2.6]{KMW}.

\medskip

We can then define the piecewise constant function $\rho^{\tau,\eps},p^{\tau,\eps}:[0,T]\mapsto P(\Omega)$ by
\begin{equation}\label{eq:interrhop}
\begin{array}{lll}
\rho^{\tau,\eps}(t)&:=\rho^{n+1} &\text{ for all }  t\in[n\tau,(n+1)\tau)\\
p^{\tau,\eps}(t)& :=p^{n+1} &  \text{ for all }  t\in[n\tau,(n+1)\tau).
\end{array}
\end{equation}

The main result of \cite{KMW} is the convergence of $(\rho^{\tau,\eps},p^{\tau,\eps})$ when $\tau\to0$ with $\eps>0$ fixed to a weak solution of \eqref{eq:weak}-\eqref{eq:phi0}.
The proof of Theorem \ref{thm:conv1} can easily be adapted to establish the convergence of $(\rho^{\tau,\eps},p^{\tau,\eps})$  to a weak solution  of \eqref{eq:HSST} when $\tau$ and $\eps$ both go to zero:
\begin{theorem}[Convergence when $\eps,\tau\to0$]\label{thm:conv3}
Given $T>0$, an initial data $\rho_{in} = \chi_{E_{in}}\in BV(\Omega;\{0,1\})$ and $\mu\geq 0$.
Consider a subsequence $(\eps_n,\tau_n)$ with $\max\{\eps_n,\tau_n\}\to 0$. Up to a subsequence (still denoted $(\eps_n,\tau_n)$),   the discrete time approximation $\rho^{\eps_n,\tau_n}(x,t)$ converges to $\rho(x,t)$ strongly in  $L^\infty((0,T);L^1(\Omega))$ and $q^{\eps_n,\tau_n}$ converges to $q$ weakly-$*$ in $L^2((0,T);(C^s(\Omega))^*)$ (for any $s>0$). 
Furthermore, if the following energy convergence assumption holds:
$$
\lim_{n\to\infty}\int_0^T \J_{\eps_n}( \rho^{\eps_n,\tau_n} (t)) \, dt = \int_0^T \J_0(\rho(t))\, dt,
$$
then $(\rho, q)$ is a weak solution of \eqref{eq:HSST} in the sense of Definition \ref{def:weak2} with initial condition $\chi_{E_{in}}$ and contact angle
$$\gamma=-\min\left( 1,\frac{2\alpha}{\alpha+\sqrt \sigma \beta} \right).$$
\end{theorem}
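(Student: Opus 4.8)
The plan is to replicate the proof of Theorem~\ref{thm:conv1} at the discrete level, the only structural change being that the continuous energy--dissipation inequality \eqref{eq:energy} is replaced by the one built into the JKO scheme \eqref{eq:JKO}. Comparing the minimizer $\rho^n$ with the competitor $\rho^{n-1}$ gives $\F_\eps(\rho^n) + \frac{1}{2\tau} W_2^2(\rho^n,\rho^{n-1}) \leq \F_\eps(\rho^{n-1})$, and summation yields, for any $N$,
$$\F_\eps(\rho^N) + \sum_{n=1}^N \frac{1}{2\tau}W_2^2(\rho^n,\rho^{n-1}) \leq \F_\eps(\rho_{in}).$$
Since $\rho_{in}$ is a characteristic function, Proposition~\ref{prop:4} bounds $\J_\eps(\rho_{in})$, hence $\F_\eps(\rho_{in})$, uniformly in \emph{both} $\eps$ and $\tau$. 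Writing $W_2^2(\rho^n,\rho^{n-1}) = \tau^2\int|v^n|^2\rho^n$, this provides the discrete analogues of Lemma~\ref{lem:unifestimates}: a uniform bound on $\J_\eps(\rho^{\tau,\eps}(t))$, a uniform bound on $\sum_n\tau\int|v^n|^2\rho^n = \int_0^T\int_\Omega |v^{\tau,\eps}|^2\rho^{\tau,\eps}$, and, via the triangle inequality and Cauchy--Schwarz applied to the Wasserstein distances, a uniform $1/2$--H\"older estimate for $t\mapsto\rho^{\tau,\eps}(t)$ in $W_2$ (equivalently in $H^{-1}$).

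With these estimates the strong convergence of $\rho^{\eps_n,\tau_n}$ proceeds exactly as in Proposition~\ref{prop:rhostrong}. Inequality \eqref{eq:BVphi}, being a fixed-time pointwise consequence of formula \eqref{eq:Jepsd2}, still yields a uniform $L^\infty((0,T);\BV(\Omega))$ bound on $\psi^{\tau,\eps} := 2F(\sigma\phi^{\tau,\eps})$; the decomposition \eqref{eq:psin} together with \eqref{eq:ghj1}--\eqref{eq:ghj2} shows that $\psi^{\tau,\eps}$ and $\rho^{\tau,\eps}$ differ by $O(\eps^{1/4})$ in $L^2$ and that $\psi^{\tau,\eps}$ is $H^{-1}$--close to its limit. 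The Lions--Aubin argument (Lemma~\ref{lem:LA}) then forces strong $L^\infty((0,T);L^1)$ convergence to a limit $\rho(t)\in\BV(\Omega;\{0,1\})$. To pass to the limit in the continuity equation I would use the standard JKO telescoping identity: for $\zeta\in C_c^\infty(\overline\Omega\times[0,\infty))$, since $T^n\#\rho^n=\rho^{n-1}$ and $x-T^n(x)=\tau v^n$, a Taylor expansion gives $\zeta(x)-\zeta(T^n(x)) = \tau\,\na\zeta\cdot v^n + O(\tau^2|v^n|^2)$, and summation in $n$ produces \eqref{eq:weak11b} with $E=\rho v$, the remainder being controlled by $\tau\,\|D^2\zeta\|_\infty\sum_n\tau\int|v^n|^2\rho^n\to0$.

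For the pressure equation, the Euler--Lagrange condition of \eqref{eq:JKO}, which is precisely the identity $\rho^n v^n = \eps^{-1}\rho^n\na\phi^n - \na p^n$ established in \cite{KMW}, lets me repeat the computation leading to \eqref{eq:weakmc}: setting $q^{\tau,\eps}$ as in \eqref{eq:modp} (normalized to zero average), one obtains the time-integrated identity
$$\int_0^T\!\!\int_\Omega E^{\tau,\eps}\cdot\xi = \int_0^T\!\!\int_\Omega \eps^{-1}\Big(\tfrac{1}{2\sigma}-\phi^{\tau,\eps}\Big)\xi\cdot\na\rho^{\tau,\eps} + q^{\tau,\eps}\,\div\xi.$$
The bound \eqref{eq:bd} of Proposition~\ref{prop:firstvar} together with the Schauder/duality argument from the proof of Theorem~\ref{thm:conv1} yields a uniform bound on $q^{\tau,\eps}$ in $L^2((0,T);(C^s(\Omega))^*)$, hence a weak-$*$ limit $q$. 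Since the energy-convergence hypothesis is stated for the time integral $\int_0^T\J_{\eps_n}(\rho^{\eps_n,\tau_n})\,dt$, Corollary~\ref{cor:1} applies verbatim and identifies the limit of the singular term with the mean-curvature plus contact-angle expression, giving \eqref{eq:weak12b}.

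I expect the main obstacle to be the bookkeeping of the discrete-time step, specifically verifying that the error produced by replacing the JKO flow map $T^n$ by the infinitesimal transport along $\xi$ vanishes as $\tau\to0$ while $\eps\to0$ simultaneously. This requires the H\"older-in-time and dissipation bounds above to be genuinely joint in $(\eps,\tau)$; because every estimate descends from the single inequality $\F_\eps(\rho_{in})\leq C$ with $C$ independent of both parameters, no difficulty beyond careful tracking of the $O(\tau)$ remainders should arise, and the fixed-time character of both the $\BV$ bound \eqref{eq:BVphi} and the first-variation statement means they transfer to the piecewise-constant interpolant without modification.
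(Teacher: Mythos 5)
Your proposal is correct and follows essentially the same route as the paper: the JKO energy inequality furnishes the joint-in-$(\eps,\tau)$ a priori estimates, the discrete interpolants satisfy an approximate continuity equation (with $O(\tau)$ remainders controlled by $\sum_n W_2^2(\rho^n,\rho^{n-1})\leq C\tau$) and the \emph{exact} pressure identity \eqref{eq:pressureepstau}, after which the strong $L^1$ convergence argument of Proposition~\ref{prop:rhostrong} and the first-variation limit of Corollary~\ref{cor:1} apply verbatim. The only cosmetic difference is that the piecewise-constant interpolant satisfies $W_2(\rho^{\eps,\tau}(t),\rho^{\eps,\tau}(s))\leq C\sqrt{t-s+\tau}$ rather than a genuine H\"older bound, which is harmless for the compactness step.
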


We will not provide the details of the proof of this result which is a straightforward adaption of the arguments presented in this paper to prove Theorem \ref{thm:conv1}.
The key is to recall that the discrete approximations $\rho^{\eps,\tau}$ and $p^{\eps,\tau}$ satisfy some approximation of equations \eqref{eq:weak11}-\eqref{eq:weak12}. Indeed,
in addition to $\rho^{\eps,\tau}$ and $p^{\eps,\tau}$, we can define  the piecewise constant interpolations 
$\rho^{\tau,\eps}(x,t)$, $p^{\tau,\eps}(x,t)$, $v^{\tau,\eps}(x,t)$ and $\phi^{\tau,\eps}(x,t)$ by
\begin{equation}\label{eq:inter}
\begin{aligned}
%\rho^{\tau,\eps}(t)& :=\rho^{n+1} &\text{ for all } \ \ t\in[n\tau,(n+1)\tau)\\
%p^{\tau,\eps}(t)& :=p^{n+1}  & \text{ for all } \ \ t\in[n\tau,(n+1)\tau)\\
v^{\tau,\eps}(t)& :=v^{n+1} & \text{ for all } \ \ t\in[n\tau,(n+1)\tau)\\
\phi^{\tau,\eps}(t) & :=\phi^{n+1}&  \text{ for all } \ \ t\in[n\tau,(n+1)\tau).
%E ^{\eps,\tau}(t)&:=\rho^{n+1}v ^{n+1}  & \text{ for all }  \ \ t\in[n\tau,(n+1)\tau)\\
\end{aligned}\end{equation}
% \eqref{eq:rhointer}, \eqref{eq:inter}.
and the momentum 
$$E ^{\eps,\tau} (x,t)=\rho^{\tau,\eps}(x,t) v^{\tau,\eps}(x,t).$$
%\begin{equation}
%\begin{aligned}
%\rho^{\eps,\tau}(t)& :=\rho^{n+1}_\eps \qquad \text{if} \ \ t\in[n\tau,(n+1)\tau),\\
%v ^{\eps,\tau}(t,\cdot)&:=\frac{\nabla \psi _\eps^{n+1}}{\tau} \qquad \text{if} \ \ t\in[n\tau,(n+1)\tau),\\
%p ^{\eps,\tau}(t,\cdot )&:=p ^{n+1} _\eps\qquad \text{if} \ \ t\in[n\tau,(n+1)\tau),\\
%E ^{\eps,\tau}(t,\cdot)&:=v ^{n+1}_\eps(t,\cdot)\rho ^{\tau}\qquad\text{if} \ \ t\in[n\tau,(n+1)\tau),
%\end{aligned}
%\end{equation} 
Then we have (see \cite{KMW}):
\begin{proposition}
For any smooth test function $\zeta(x,t) $ compactly supported in $\Omega\times [0,T)$
and given $N$ such that $N\tau \geq T$, there holds:
%, let $\zeta^\tau(x,t)$ be the piecewise constant function defined by $\zeta^\tau(x,t) = \zeta(x,n\tau)$ for $t\in [(n-1)\tau,n\tau)$. Then:
\begin{align}
\int_0^\infty \int_\Omega E^{\eps,\tau}  \cdot \na \zeta \, dx \, dt
&  = -\int_\Omega \rho_{in} (x) \zeta(x,0) \, dx
 -\int_0^\infty  \int_\Omega  \rho^{\eps,\tau}(x,t) \pa_t \zeta(x,t) \, dx\,  dt \nonumber \\
 & \qquad
 + \mathcal O \left(\|D^2\zeta\|_{L^\infty(\Omega \times\R_+)} \sum_{k=0}^N W_2^2(\rho_\eps^n,\rho_\eps^{n-1}) 
+\tau \| \pa_t\zeta\|_\infty + \tau T \| \pa^2_t \zeta\|_\infty
 \right)\label{eq:weakepstau}
 \end{align}
%where $\zeta^\tau(x,t)$ is the piecewise constant function defined by $\zeta^\tau(x,t) = \zeta(x,n\tau)$ for $t\in [(n-1)\tau,n\tau)$ and 
%where $N$ is such that $N\tau =T$ with $\zeta$ supported in $(0,T)$.

For any smooth vector field $\xi(x,t)$ satisfying $\xi\cdot n=0$ on $\pa \Omega$, there holds:
\begin{equation}\label{eq:pressureepstau}\int_0^\infty \int_\Omega E^{\eps,\tau} \cdot \xi\, dx\, dt   = \int _0^\infty \int_\Omega (\eps^{-1}\rho^{\eps,\tau} \na \phi^{\eps,\tau} \cdot \xi  + \mu\rho ^{\eps,\tau}\div \xi\ + p^{\eps,\tau}\div \xi)\, dx\, dt
\end{equation}
\end{proposition}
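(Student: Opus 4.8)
The two identities have rather different characters: the momentum equation \eqref{eq:pressureepstau} is an exact consequence of the construction of $v^n$ and $p^n$, whereas the approximate continuity equation \eqref{eq:weakepstau} requires quantifying the errors introduced by the spatial transport and by the time discretization.

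\textbf{Momentum equation.} I would start from the flux identity satisfied by the JKO iterates, established in \cite[Proposition 2.6]{KMW}, namely $\rho^n v^n = -\mu\na\rho^n + \eps^{-1}\rho^n\na\phi^n - \na p^n$ with $\phi^n$ solving \eqref{eq:phi0}. Since the interpolations \eqref{eq:interrhop} and \eqref{eq:inter} are piecewise constant in time, it is enough to test this identity on each interval $[n\tau,(n+1)\tau)$ against the time-averaged field $\eta:=\int_{n\tau}^{(n+1)\tau}\xi(\cdot,t)\,dt$, which still satisfies $\eta\cdot n=0$ on $\pa\Omega$. Integrating by parts the two gradient terms $\na\rho^n$ and $\na p^n$, the boundary contributions vanish because $\xi$ is tangent to $\pa\Omega$, and one is left with $\mu\int_\Omega\rho^n\div\eta$ and $\int_\Omega p^n\div\eta$. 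Summing over $n$ reassembles the interpolations and produces \eqref{eq:pressureepstau} with no remainder. This step is pure bookkeeping and I anticipate no difficulty.

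\textbf{Continuity equation.} I would proceed in three steps. First, since $\rho^{\eps,\tau}=\rho^{n+1}$ on $[n\tau,(n+1)\tau)$ and $\zeta$ is compactly supported in $[0,T)$ with $N\tau\ge T$, I rewrite $\int_0^\infty\int_\Omega\rho^{\eps,\tau}\pa_t\zeta$ as $\sum_n\int_\Omega\rho^{n+1}(\zeta(\cdot,t_{n+1})-\zeta(\cdot,t_n))$ and apply summation by parts in $n$; the terminal term drops out by compact support and the initial term yields $-\int_\Omega\rho_{in}\zeta(\cdot,0)$, leaving $\sum_n\int_\Omega(\rho^{n+1}-\rho^n)\zeta(\cdot,t_n)$. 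Second, I exploit the optimal transport map, $T^{n+1}\#\rho^{n+1}=\rho^n$, to write $\int_\Omega(\rho^{n+1}-\rho^n)\zeta(\cdot,t_n)=\int_\Omega[\zeta(x,t_n)-\zeta(T^{n+1}(x),t_n)]\,d\rho^{n+1}$, and Taylor-expand in $x$; since $x-T^{n+1}(x)=\tau v^{n+1}(x)$ this gives the leading term $\tau\int_\Omega\na\zeta(\cdot,t_n)\cdot v^{n+1}\,d\rho^{n+1}$ together with a remainder bounded by $\|D^2\zeta\|_\infty\int_\Omega|x-T^{n+1}(x)|^2\,d\rho^{n+1}=\|D^2\zeta\|_\infty W_2^2(\rho^{n+1},\rho^n)$. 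Third, I compare the resulting discrete sum with $\int_0^\infty\int_\Omega E^{\eps,\tau}\cdot\na\zeta$, whose integrand is $\rho^{n+1}v^{n+1}\cdot\na\zeta(\cdot,t)$ on each interval; replacing the time average of $\na\zeta$ over $[t_n,t_{n+1})$ by its grid value $\na\zeta(\cdot,t_n)$, together with the index shift between $\rho^{n+1}$ and $\rho^{n}$ in the interpolation, produces the temporal discretization errors which accumulate to the terms $\tau\|\pa_t\zeta\|_\infty$ and $\tau T\|\pa_t^2\zeta\|_\infty$ appearing in \eqref{eq:weakepstau}.

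\textbf{Main obstacle.} The only genuinely delicate point is the transport estimate in the second step: the Taylor remainder must be controlled, uniformly in the (possibly irregular) optimal map $T^{n+1}$, purely by the squared transport cost $W_2^2(\rho^{n+1},\rho^n)$, so that summation over $n$ reproduces the factor $\sum_k W_2^2(\rho_\eps^n,\rho_\eps^{n-1})$. This quantity is precisely the one controlled by the standard JKO a priori bound $\tfrac1{2\tau}\sum_n W_2^2(\rho^n,\rho^{n-1})\le\F_\eps(\rho_{in})-\inf_K\F_\eps$, obtained by telescoping the one-step inequality $\tfrac1{2\tau}W_2^2(\rho^n,\rho^{n-1})+\F_\eps(\rho^n)\le\F_\eps(\rho^{n-1})$ built into \eqref{eq:JKO}. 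Since $\F_\eps(\rho_{in})$ is bounded uniformly in $\eps$ by Proposition \ref{prop:4} and $\F_\eps$ is bounded below on $K$, this gives $\sum_n W_2^2(\rho^n,\rho^{n-1})=\mathcal O(\tau)$; combined with the temporal errors above, it shows that the whole remainder in \eqref{eq:weakepstau} vanishes as $\tau\to0$, which is exactly what makes the identity usable in the joint limit $\eps,\tau\to0$ of Theorem \ref{thm:conv3}.
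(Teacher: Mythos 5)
A preliminary remark: the paper itself does not prove this proposition (it is quoted from \cite{KMW}; Section 6 only explains how it is used), so your argument has to stand on its own. Most of it does. Your treatment of \eqref{eq:pressureepstau} is correct: testing the flux identity $\rho^{n+1}v^{n+1}=\eps^{-1}\rho^{n+1}\na\phi^{n+1}-\mu\na\rho^{n+1}-\na p^{n+1}$ on each interval against $\eta_n=\int_{n\tau}^{(n+1)\tau}\xi(\cdot,t)\,dt$ (still tangential to $\pa\Omega$) and integrating by parts gives the identity exactly; note that your form of the flux identity, with the $-\mu\na\rho^{n+1}$ term, is the one consistent with \eqref{eq:pressureepstau} (the display in Section 6 of the paper omits it). Steps 1 and 2 of your continuity argument are also correct: the Abel summation is exact, the push-forward/Taylor step gives $\int_\Omega(\rho^{n+1}-\rho^n)\,\zeta(\cdot,t_n)\,dx=\tau\int_\Omega\na\zeta(\cdot,t_n)\cdot v^{n+1}\,d\rho^{n+1}+\mathcal O\big(\|D^2\zeta\|_\infty W_2^2(\rho^{n+1},\rho^n)\big)$, and the bound $\sum_n W_2^2(\rho^n,\rho^{n-1})=\mathcal O(\tau)$ from the one-step JKO inequality is right.

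The gap is in your step 3. There you must estimate the mismatch
\begin{equation*}
\sum_n\int_\Omega \rho^{n+1}v^{n+1}\cdot\Big[\int_{t_n}^{t_{n+1}}\big(\na\zeta(\cdot,t)-\na\zeta(\cdot,t_n)\big)\,dt\Big]\,dx ,
\end{equation*}
and the only bound this manipulation yields is of the form $C\,\tau\,\|\pa_t\na\zeta\|_{L^\infty}$ (times a norm of $E^{\eps,\tau}$, which the energy controls): a \emph{mixed} space--time derivative, which is not dominated by $\|D^2\zeta\|_\infty\sum_n W_2^2+\tau\|\pa_t\zeta\|_\infty+\tau T\|\pa_t^2\zeta\|_\infty$. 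So the stated remainder does not follow from the argument as you describe it; asserting that the errors ``accumulate to'' those terms is precisely the point that needs proof. The fix is to run your step 2 against the time averages $\bar\zeta_n:=\fint_{t_n}^{t_{n+1}}\zeta(\cdot,t)\,dt$ rather than the grid values: then $\tau\int_\Omega\na\bar\zeta_n\cdot v^{n+1}\,d\rho^{n+1}=\int_{t_n}^{t_{n+1}}\int_\Omega E^{\eps,\tau}\cdot\na\zeta\,dx\,dt$ holds \emph{exactly}, while $\|D^2\bar\zeta_n\|_\infty\le\|D^2\zeta\|_\infty$ leaves the Hessian error unchanged. The entire temporal error is then concentrated in $\sum_n\int_\Omega(\rho^{n+1}-\rho^n)\big(\bar\zeta_n-\zeta(\cdot,t_n)\big)\,dx$; writing $\bar\zeta_n-\zeta(\cdot,t_n)=\frac\tau2\pa_t\zeta(\cdot,t_n)+\mathcal O(\tau^2\|\pa_t^2\zeta\|_\infty)$, using $0\le\rho^n\le1$ and the fact that at most $T/\tau+1$ terms are nonzero, and Abel-summing the $\frac\tau2\pa_t\zeta(\cdot,t_n)$ contribution (which is bounded by $\|\pa_t\zeta\|_\infty$ plus $T/\tau$ increments of size $\tau\|\pa_t^2\zeta\|_\infty$) yields precisely $\mathcal O\big(\tau\|\pa_t\zeta\|_\infty+\tau T\|\pa_t^2\zeta\|_\infty\big)$. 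With this single modification your proof is complete and gives \eqref{eq:weakepstau} with exactly the stated remainder.
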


Passing to the limit in the continuity equation \eqref{eq:weakepstau} can be done exactly as in the case $\tau\to0$ with $\eps>0$ fixed (see \cite{KMW}), while equation \eqref{eq:pressureepstau} is exactly the same as our equation \eqref{eq:weak12}, so we can adapt the proof presented in Section \ref{sec:lim} of the present paper to pass to the limit in 
\eqref{eq:pressureepstau} and prove Theorem~\ref{thm:conv3}.

%We intend to obtain uniform (in $\tau$) estimates and pass to the limit $\tau \to 0$.
%\paragraph{Continuous interpolation.} 
%Interpolating between $\rho^{n}$ and $\rho^{n+1}$ along the natural geodesic for the Wasserstein distance, we

% and the corresponding velocity and momentum variables $(\widetilde{\rho}^{\eps,\tau},\widetilde{v}^{\eps,\tau},\widetilde{E}^{\eps,\tau})$:
%\[
%\widetilde{\rho}^{\tau,\eps}(t)=\left(\frac{t-n\tau}{\tau}(Id-T^{n+1})+T^{n+1}\right){\#}\rho ^{n+1} \qquad \mbox{for} \ \ t\in[n\tau,(n+1)\tau)
%\]
%where we recall that  $T^{n+1}$ is the optimal transport from $\rho^{n+1}$ to $\rho^{n}$. We define $\widetilde{v} ^{\eps,\tau}(t,\cdot)$ as the unique velocity field such that $\widetilde{v} ^{\eps,\tau}(t,\cdot) \in Tan_{\widetilde{\rho} ^{\eps,\tau}}\mathcal{P}_2(\R^d)$ and $(\widetilde{\rho}^{\eps,\tau},\widetilde{v}^{\eps,\tau})$ satisfy the continuity equation, that is:
%\[
%\widetilde{v} ^{\eps,\tau}=v ^{\eps,\tau}\circ \left(\frac{t-n\tau}{\tau}(Id-T^{n+1})+T^{n+1}\right)^{-1}.
%\]
%Finally, we defind the momentum
%\[
%\widetilde{E} ^{\eps,\tau} :=\widetilde{v} ^{\eps,\tau} \,\widetilde{\rho}^{\eps,\tau}.
%\]
%In particular we have
%\[
%\partial_t\widetilde{\rho}^{\eps,\tau}+\nabla \cdot \widetilde{E}^{\eps,\tau}=0
%\]
%in the sense of distribution on $(0,T)\times \Omega$. 

\medskip

\appendix
\medskip
\medskip
\medskip

\section{A few facts about $BV$ functions}
First we recall the classical result:
\begin{proposition}\label{prop:BV}
Let $f_k$ be a sequence of functions such that $f_k \to f$ in $L^1(\Omega)$ when $k\to\infty$.
Then
$$\liminf_{k\to\infty} \int_\Omega |\na f_k| \vphi \, dx 
\geq  \int_\Omega |\na f| \vphi 
$$
for all $\vphi\in C(\Omega)$.
Furthermore, if $\int_\Omega |\na f_k|\, dx \to \int_\Omega |\na f|$, then
$$
\lim_{k\to\infty} \int_\Omega |\na f_k| \vphi \, dx 
= \int_\Omega |\na f| \vphi 
$$
for all $\vphi\in C(\Omega)$.
\end{proposition}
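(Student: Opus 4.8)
The plan is to reduce both assertions to a property of the total variation \emph{measures}. Write $\mu_k := |\na f_k|$ and $\mu := |\na f|$, which are nonnegative Radon measures on $\Omega$. The first inequality is then the weighted lower semicontinuity of the total variation, which I read for nonnegative weights $\vphi$ (this is the implicit hypothesis, since the stated inequality is false for sign-changing $\vphi$), while the second says that once the total masses converge the measures $\mu_k$ converge weakly-$*$ to $\mu$. I would treat the two points in turn: the first via the dual definition of total variation, the second via an elementary measure-theoretic argument.

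For the lower semicontinuity, the key tool is the dual representation
$$
\int_\Omega \vphi \, d|\na f| = \sup\left\{ \int_\Omega f \, \div g \, dx \ :\ g \in C^1_c(\Omega;\R^d),\ |g(x)| \le \vphi(x) \ \forall x \right\},
$$
valid for nonnegative $\vphi\in C(\Omega)$; it follows from the polar decomposition $\na f = \nu\,|\na f|$ with $|\nu|=1$, the pointwise optimal competitor being a smooth approximation of $-\vphi\,\nu$. Fixing an admissible $g$, the convergence $f_k\to f$ in $L^1(\Omega)$ together with $\div g \in C_c(\Omega)$ gives $\int_\Omega f\,\div g\,dx = \lim_k \int_\Omega f_k\,\div g\,dx$, while integration by parts and the constraint $|g|\le\vphi$ give $\int_\Omega f_k\,\div g\,dx = -\int_\Omega g\cdot d\na f_k \le \int_\Omega \vphi\, d|\na f_k|$. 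Hence $\int_\Omega f\,\div g\,dx \le \liminf_k \int_\Omega \vphi\, d|\na f_k|$, and taking the supremum over $g$ yields the first assertion.

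For the second assertion, suppose in addition $\int_\Omega |\na f_k| \to \int_\Omega |\na f|$, i.e. $\mu_k(\Omega)\to\mu(\Omega)$. Let $\vphi\in C(\overline\Omega)$ and set $M:=\sup_\Omega \vphi$ (the domain being bounded, $\vphi$ is bounded and constants are $\mu_k$-integrable). Applying the first part to the nonnegative weight $M-\vphi$ and using the convergence of the masses gives
$$
\int_\Omega (M-\vphi)\,d\mu \le \liminf_k \int_\Omega (M-\vphi)\,d\mu_k = M\mu(\Omega) - \limsup_k \int_\Omega \vphi\,d\mu_k,
$$
whence $\limsup_k \int_\Omega \vphi\,d\mu_k \le \int_\Omega \vphi\,d\mu$. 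Applying the first part to the nonnegative weight $\vphi - \inf_\Omega\vphi$ (and again using mass convergence to absorb the additive constant) gives the matching bound $\int_\Omega \vphi\,d\mu \le \liminf_k \int_\Omega \vphi\,d\mu_k$. Combining the two yields $\lim_k \int_\Omega \vphi\,d\mu_k = \int_\Omega \vphi\,d\mu$ for every $\vphi\in C(\overline\Omega)$.

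Both steps are classical and I expect no serious obstacle; the only points requiring care are the nonnegativity of the weight in the dual formula (which forces one to add suitable constants to reduce a sign-changing $\vphi$ to a nonnegative weight, legitimate precisely because the masses converge) and the behaviour of $\vphi$ near $\pa\Omega$, which is harmless since $\Omega$ is bounded and one may work with $\vphi\in C(\overline\Omega)$. I would cite a standard reference (e.g. Ambrosio--Fusco--Pallara or Evans--Gariepy) for the dual representation of the total variation and for the weak-$*$ convergence of nonnegative measures under convergence of total mass.
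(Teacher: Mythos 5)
Your proposal is correct. There is, however, nothing in the paper to compare it against: Proposition~\ref{prop:BV} is stated in the appendix as a recalled ``classical result'' and the authors give no proof of it (they only prove the companion Proposition~\ref{prop:measure}). So the relevant question is whether your blind argument is sound, and it is.

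Two remarks on the substance. First, you correctly flagged the one genuine subtlety in the statement as written: the liminf inequality requires $\vphi\geq 0$ (it fails for $\vphi\equiv -1$, where it would assert upper semicontinuity of the total variation), while the second assertion, under convergence of the masses, holds for sign-changing $\vphi$. This reading is exactly what the paper needs: Proposition~\ref{prop:BV} is invoked in \eqref{eq:lim1} and in the proof of Proposition~\ref{prop:firstvar} with $\vphi=\div\xi$, which changes sign, but always in the regime where $\int_\Omega|\na F(\phi^\eps)|\,dx$ converges to the limiting total mass, i.e.\ precisely your second assertion. Second, your two steps are the standard route: the weighted dual formula $\int_\Omega\vphi\,d|\na f|=\sup\{\int_\Omega f\,\div g:\ g\in C^1_c,\ |g|\leq\vphi\}$ gives lower semicontinuity as a supremum of functionals continuous under $L^1$ convergence, and the constant-shift trick ($M-\vphi\geq 0$, $\vphi-\inf\vphi\geq 0$) upgrades weak-$*$ convergence plus mass convergence to convergence against all bounded continuous weights. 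The only technical debt is the weighted dual formula itself (the admissible competitors near the zero set of $\vphi$ and the mollification keeping the constraint $|g|\leq\vphi$ require a small argument), which you reasonably discharge by citation; an alternative that avoids it entirely is to deduce the weighted liminf from the unweighted one on open sets via the layer-cake formula $\int_\Omega\vphi\,d|\na f|=\int_0^\infty|\na f|(\{\vphi>t\})\,dt$ and Fatou, or to quote Reshetnyak's lower semicontinuity and continuity theorems, which give both assertions at once.
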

We also need the following result:
\begin{proposition}\label{prop:measure}
Let $f_k$ be a sequence of function such that $f_k \to f$ in $L^1(\Omega)$ and $\int_\Omega |\na f_k|\, dx \to \int_\Omega |\na f|$.
Then
$$
\lim_{k\to \infty}\int_\Omega  \zeta(x) \frac{\pa_i f_k}{|\na f_k|}\frac{\pa_jf_k}{|\na f_k|} |\na f_k| =
 \int_\Omega \zeta(x) \frac{\pa_i f}{|\na f|}\frac{\pa_jf}{|\na f|} |\na f|
 $$
for all $\zeta\in C(\overline \Omega)$.
 \end{proposition}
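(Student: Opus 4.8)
The plan is to recognize this as a direct application of Reshetnyak's continuity theorem to the $\R^d$-valued Radon measures $\na f_k$. Writing the polar decomposition $\na f_k = \nu_k\, |\na f_k|$ with $\nu_k(x)\in S^{d-1}$ for $|\na f_k|$-a.e.\ $x$ (so that $\nu_k = \na f_k/|\na f_k|$ is exactly the Radon--Nikodym density appearing in the statement), the integrand on the left is
$$\zeta(x)\,(\nu_k)_i\,(\nu_k)_j = g(x,\nu_k(x)), \qquad g(x,p):=\zeta(x)\,p_i\,p_j,$$
and similarly on the right with $\nu = \na f/|\na f|$. Since $g$ is continuous and bounded on $\overline\Omega\times S^{d-1}$ (its only $x$-dependence, $\zeta$, is continuous on $\overline\Omega$, and $p\mapsto p_ip_j$ is smooth and bounded on the sphere), Reshetnyak's theorem will give the conclusion once its two hypotheses are verified: weak-$*$ convergence of the vector measures and convergence of their total masses.

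First I would establish that $\na f_k \rightharpoonup \na f$ weakly-$*$ as $\R^d$-valued measures on $\Omega$. The assumption $\int_\Omega |\na f_k| \to \int_\Omega |\na f|$ in particular bounds $\sup_k |\na f_k|(\Omega)<\infty$, so the $\na f_k$ form a bounded sequence in the dual of $C_0(\Omega;\R^d)$. For any $\vphi\in C_c^\infty(\Omega)$, integration by parts gives $\int_\Omega \vphi\, d(\pa_i f_k) = -\int_\Omega f_k\, \pa_i\vphi\, dx$, and the strong convergence $f_k\to f$ in $L^1(\Omega)$ passes to the limit in the right-hand side; hence $\int_\Omega \vphi\, d(\pa_i f_k)\to \int_\Omega \vphi\, d(\pa_i f)$ for all $\vphi\in C_c^\infty(\Omega)$. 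Combined with the uniform total-variation bound, a density argument extends this to all test fields in $C_0(\Omega;\R^d)$, which is precisely the weak-$*$ convergence $\na f_k\rightharpoonup \na f$.

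The second hypothesis, $|\na f_k|(\Omega)\to |\na f|(\Omega)$, is exactly the assumption $\int_\Omega|\na f_k|\to\int_\Omega|\na f|$. With both hypotheses in hand, Reshetnyak's continuity theorem \cite{Reshetnyak}, applied to the bounded continuous integrand $g(x,p)=\zeta(x)p_ip_j$, yields
$$\lim_{k\to\infty}\int_\Omega g(x,\nu_k)\,d|\na f_k| = \int_\Omega g(x,\nu)\,d|\na f|,$$
which is the claimed identity. In truth there is no substantive obstacle here: the entire content lies in correctly identifying the integrand as a continuous bounded function of the measure's polar vector and invoking the classical theorem. The only point requiring mild care is the joint use of weak-$*$ convergence and mass convergence, which together rule out any concentration of $|\na f_k|$ onto lower-dimensional sets or loss of mass to the boundary, and thereby upgrade the lower-semicontinuity of Proposition~\ref{prop:BV} to genuine convergence of the quadratic expression in the normals.
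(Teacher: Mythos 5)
Your proof is correct, but it takes a genuinely different route from the paper's. You identify the statement as an instance of Reshetnyak's continuity theorem: you verify weak-$*$ convergence of the vector measures $\na f_k \rightharpoonup \na f$ (via integration by parts against $C_c^\infty$ test functions, the strong $L^1$ convergence of $f_k$, the uniform total-variation bound, and density), note that mass convergence is exactly the hypothesis, and then apply the classical theorem to the bounded continuous integrand $g(x,p)=\zeta(x)p_ip_j$. The paper instead gives an elementary, self-contained argument -- the authors explicitly remark that they could not locate a reference and therefore include a proof. Their proof first reduces to the diagonal case $\nu_i\nu_j$ with $i=j$ via the polarization identity $\nu_i\nu_j=\frac12[(\nu_i+\nu_j)^2-\nu_i^2-\nu_j^2]$, then establishes $\int_\Omega |\nu_i^k|^2|\na f_k| \to \int_\Omega |\nu_i|^2|\na f|$ by a hand-crafted duality argument: testing $\pa_i f_k$ against a vector field $g\in C_0^1(\Omega;\R^d)$, $|g|\leq 1$, chosen to nearly attain the total variation of $f$, and exploiting the quantity $\frac12\int|g_i-\nu_i^k|^2|\na f_k|\geq 0$, whose smallness (forced by the convergence of masses) squeezes the desired limit from above and below. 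Your route is considerably shorter and handles the general weight $\zeta\in C(\overline\Omega)$ in a single stroke, at the price of invoking a nontrivial classical theorem (one the paper itself cites, for a related perimeter statement, in Section 5); the paper's route buys self-containedness and avoids any polar-decomposition machinery, which is presumably why the authors included it in the appendix. Both arguments hinge on the same underlying mechanism: weak-$*$ convergence plus convergence of total masses prevents cancellation and loss of mass, upgrading lower semicontinuity to genuine convergence of the quadratic expressions in the normals.
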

 It is likely that this proposition is well known, but since we could not find a reference for its proof, we include one below.

\begin{proof}[Proof of Proposition \ref{prop:measure}]
We denote $\nu_i^k= \frac{\pa_if_k}{|\na f_k|}$ which can be understood as the Radon-Nikodym derivative of $\pa_i f_k$ with respect to the measure $|\na f_k|$. Similarly, we note $\nu_i= \frac{\pa_if}{|\na f|}$. 
First, we note that it is enough to prove that under the conditions of the proposition we have
\begin{equation}\label{eq:nfoidl}
\lim_{k\to \infty}
\int_\Omega |\nu_i^k|^2 |\na f_k|  = \int_\Omega |\nu_i |^2 |\na f|\qquad i=1,\dots,n
\end{equation}
since we can write $\nu_i \nu_j =\frac 1 2 [ (\nu_i +\nu_j)^2 -\nu_i^2 -\nu_j^2]$.
%Next, we note that we have
%$$
%\lim_{\eps\to 0} \sum_i
%\int_\Omega |\nu_i^\eps|^2 |\na f_k| =  \lim_{\eps\to 0}  \int_\Omega  |\na f_k|  =   \int_\Omega  |\na f| %= 
%$$
%since $\sum_i  |\nu^\eps_i |^2 = 1$.
% and  $\sum_i  |\nu_i |^2 = 1$ at least $|\na f_k|$ a.e.
%So it is enough to show that
%$$ 
%\liminf_{\eps\to 0}
%\int_\Omega |\nu_i^\eps|^2 |\na f_k|  \geq  \int_\Omega |\nu_i |^2 |\na f|\qquad i=1,\dots,n
%$$

The idea of the proof is as follows:
Given a vector field $ g = (g_1,\dots g_n)\in C^1_0(\Omega,\R^d)$ such that $|g(x)|\leq 1$   for all $x$,
we compute
\begin{align}
- \int_\Omega \pa_i g_i  f_k \, dx = \int_\Omega g_i  \pa_i  f_k \, dx  
& =  \int_\Omega g_i  \nu_i^ k  |\na f_k | \, dx \nonumber \\
& \leq \frac 1 2 \int_\Omega |g_i|^2 |\na f_k | \, dx +\frac 1 2   \int_\Omega |\nu_i^k|^2 |\na f_k | \, dx \label{eq:inecs}
\end{align}
When we sum these inequalities over $i=1,\dots,n$, the quantity on the left converges to $- \int_\Omega  f \div g \, dx $ which is close to $\int_\Omega |\na f|$ for a proper choice of $g$, while the right hand side is bounded by $\int_\Omega |\na f_k|$ (since $|g|^2\leq 1$ and  $|\nu^k|^2 =1$) which converges to $\int_\Omega |\na f|$ by assumption.
Equality in these inequalities will yields the result.
More precisely, if we denote,
$$ 
u^k_i = \frac 1 2 \int_\Omega |g_i|^2 |\na f_k | \, dx +\frac 1 2   \int_\Omega |\nu_i^k|^2 |\na f_k | \, dx + 
\int_\Omega \pa_i g_i  f_k \, dx = \frac 1 2 \int_\Omega |g_i-\nu_i^k|^2 |\na f_k | \, dx \geq 0
$$
then we get
$$ \sum_{i=1}^ d u^k_i \leq \int_\Omega |\na f_k | \, dx + \int_\Omega  f_k \div g \, dx
$$
We now fix $\eta>0$ and choose $g\in C^1_0(\Omega;\R^d)$ such that $|g(x)|\leq 1$ for all $x$ and 
$$  \int_\Omega |\na f| -\eta \leq -\int_\Omega  f \div g \, dx \leq  \int_\Omega |\na f|. $$
Since $f_k\to f $ in $L^1$ and $\int_\Omega  |\na f_k | \, dx \to   \int_\Omega |\na f|$,
there exists $k_0$ (depending on $g$) such that when $k\geq k_0$ then
$$ \sum_{i=1}^ d u^k_i  =\int_\Omega |\na f_k | \, dx + \int_\Omega  f_k \div g \, dx \leq \int_\Omega |\na f | \, dx  +\int_\Omega  f \div g \, dx +\eta \leq 2\eta.$$
In particular
$$  u^k_i  \leq 2\eta \qquad i=1,\dots,d \qquad \forall k\geq k_0.$$
First, this implies that
\begin{align*}
\left| \int_\Omega |g_i|^2 |\na f_k | - \int_\Omega |\nu_i^k|^2  |\na f_k |  \, dx\right|
& \leq 
\int_\Omega \left|  |g_i|^2-|\nu_i^k|^2 \right|  |\na f_k | \, dx \\
& \leq 
\int_\Omega | \left( g_i-\nu_i^k \right) \left( g_i+\nu_i^k \right) ||\na f_k | \, dx\\
& \leq 
\left( \int_\Omega \left( g_i-\nu_i^k \right)^2 |\na f_k | \, dx \right)^{1/2}\left(2 \int_\Omega |\na f_k | \, dx\right)^{1/2}\\
&\leq C\sqrt \eta.
\end{align*}
Using this bound, the inequality \eqref{eq:inecs} and the convergence of $f_k$ to $f$, we deduce  (for $k\geq k_0$):
\begin{align*}
\int_\Omega |\nu_i^k|^2 |\na f_k | \, dx 
&\geq 
\frac 1 2 \int_\Omega |g_i|^2 |\na f_k | \, dx +\frac 1 2   \int_\Omega |\nu_i^k|^2 |\na f_k | \, dx -C\sqrt \eta\\
&\geq  - \int_\Omega \pa_i g_i  f_k \, dx -C\sqrt \eta\\
& \geq  - \int_\Omega \pa_i g_i  f \, dx -C\sqrt \eta
\end{align*}
and 
\begin{align*}
\int_\Omega |\nu_i^k|^2 |\na f_k | \, dx
& \leq 
 \frac 1 2 \int_\Omega |g_i|^2 |\na f_k | \, dx +\frac 1 2   \int_\Omega |\nu_i^k|^2 |\na f_k | \, dx +C\sqrt\eta \\
 &\leq  u_i^k - \int_\Omega \pa_i g_i  f_k \, dx +C\sqrt\eta \\
 &\leq  - \int_\Omega \pa_i g_i  f  \, dx +C\sqrt\eta +C\eta.
\end{align*}

In order to conclude, we note that if we take the constant sequence $f_k=f$ (for all $\eps>0$), then the argument above yields (with the same choice of function $g$, since it only depended on the limit $f$):
$$\int_\Omega |\nu_i|^2 |\na f | \, dx  \geq  - \int_\Omega \pa_i g_i  f \, dx -C\sqrt \eta$$
and 
$$\int_\Omega |\nu_i|^2 |\na f | \, dx \leq  - \int_\Omega \pa_i g_i  f  \, dx +C\sqrt\eta +C\eta$$
We have proved that given $\eta>0$ there exists $k_0$ such that for $k\geq k_0$ we have
$$
 \int_\Omega |\nu_i|^2 |\na f | \, dx - C\sqrt\eta \leq \int_\Omega |\nu_i^k|^2 |\na f_k | \, dx  \leq \int_\Omega |\nu_i|^2 |\na f | \, dx + C\sqrt\eta
$$
which implies \eqref{eq:nfoidl} and complete the proof.
\end{proof}

\medskip

\section{A Lions-Aubin compactness result}
The following result is a simple adaptation of some standard result. We provide a proof for the sake of completeness.
\begin{lemma}\label{lem:LA}
Let $u_n$ be a sequence of function bounded in $ L^\infty(0,T ;L^1( \Omega))$ such that
$ u_n$ is bounded in $L^\infty((0,T);\BV(\Omega))$ and $u_n\to u$ in $L^\infty((0,T);H^{-1}(\Omega))$
Then
$$ \sup_{t\in[0,T]} \|u_n(t)-u(t)\|_{L^1 (\Omega)} \to 0$$
%, converging weakly to $u$ and such that
%$u_n=w_n+v_n$ with
%$$\| w_n \|_{L^1((0,T);BV( \Omega))} \leq C, \quad \| v_n\|_{L^2((0,T)\times \Omega)} \to 0, \quad \| u_n\|_{C^{1/2}(0,T;H^{-1}(\Omega))}\leq C.$$ 
%Then
%$$ u_n \to u \quad\mbox{ strongly in } L^1((0,T)\times \Omega)$$
\end{lemma}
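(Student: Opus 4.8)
The plan is to argue by contradiction, using two ingredients: the compact embedding $BV(\Omega)\hookrightarrow\hookrightarrow L^1(\Omega)$ (valid since $\Omega$ is a smooth bounded domain), and the observation that both $L^1(\Omega)$ and $H^{-1}(\Omega)$ embed into the space of distributions $\mathcal D'(\Omega)$ in a compatible way, so that if a single sequence converges in $L^1$ to some $w$ and in $H^{-1}$ to some $\tilde w$, then necessarily $w=\tilde w$ as distributions. The key point is that this glueing is done purely at the level of distributions, and so requires no continuous embedding of $L^1$ into $H^{-1}$.

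First I would record that the limit inherits the uniform $BV$ bound. Setting $M:=\sup_n\|u_n\|_{L^\infty(0,T;BV(\Omega))}<\infty$, for a.e. $t$ the bounded sequence $\{u_n(t)\}\subset BV(\Omega)$ admits, along a subsequence, a weak-$*$ limit $\bar u$ in $BV$; by the compact embedding $u_n(t)\to\bar u$ strongly in $L^1$, hence in $\mathcal D'(\Omega)$, while the hypothesis gives $u_n(t)\to u(t)$ in $H^{-1}$, hence in $\mathcal D'(\Omega)$. Uniqueness of distributional limits forces $\bar u=u(t)$, and lower semicontinuity of the total variation yields $\|u(t)\|_{BV}\le M$. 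Consequently $w_n(t):=u_n(t)-u(t)$ is bounded in $BV(\Omega)$ by $2M$, uniformly in $n$ and $t$.

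Next I would suppose the conclusion fails. Then there are $\delta>0$, a subsequence (not relabelled), and times $t_n\in[0,T]$ with $\|w_n(t_n)\|_{L^1(\Omega)}\ge\delta$. Since $\{w_n(t_n)\}$ is bounded in $BV(\Omega)$, the compact embedding $BV\hookrightarrow\hookrightarrow L^1$ lets me extract a further subsequence with $w_n(t_n)\to w$ strongly in $L^1(\Omega)$, so that $\|w\|_{L^1}\ge\delta>0$. On the other hand $\|w_n(t_n)\|_{H^{-1}}\le\|u_n-u\|_{L^\infty(0,T;H^{-1})}\to0$, so $w_n(t_n)\to0$ in $H^{-1}(\Omega)$. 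Both convergences pass to $\mathcal D'(\Omega)$, and uniqueness of the distributional limit gives $w=0$, contradicting $\|w\|_{L^1}\ge\delta$. Since the contradiction is set up against the full supremum over $t$, this simultaneously yields the claimed uniformity in $t$.

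The delicate step—and the one I would present most carefully—is precisely the identification of the $L^1$ limit with the $H^{-1}$ limit, because one cannot appeal to an Ehrling-type inequality here: the embedding $L^1(\Omega)\hookrightarrow H^{-1}(\Omega)$ fails for $d\ge3$. The clean way is to test against $\varphi\in C_c^\infty(\Omega)\subset H^1_0(\Omega)$ and note that the $H^{-1}$--$H^1_0$ pairing $\langle w_n(t_n),\varphi\rangle$ coincides with $\int_\Omega w_n(t_n)\varphi\,dx$ (both being the action of the same distribution), so that letting $n\to\infty$ gives $\int_\Omega w\,\varphi\,dx=0$ for all such $\varphi$, i.e. $w=0$. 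I would also note, when the suprema in the statement are read as essential suprema, that the times $t_n$ must be chosen in a positive-measure set on which the $BV$ bound, the smallness of the $H^{-1}$ norm, and the lower bound on the $L^1$ norm all hold simultaneously; this is a routine measure-theoretic adjustment that does not affect the argument.
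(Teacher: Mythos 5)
Your proof is correct, but it is structured differently from the paper's. The paper's argument, after noting (as you do) that $u(t)\in \BV(\Omega)$ with the same uniform bound, invokes without proof an Ehrling-type interpolation inequality: for every $\delta>0$ there exists $C_\delta$ such that $\| v \|_{L^1(\Omega)} \leq \delta \| v \|_{\BV(\Omega)} + C_\delta \| v \|_{H^{-1}(\Omega)}$ for all $v\in\BV(\Omega)$; applying this to $u_n(t)-u(t)$ and letting $\delta\to 0$ yields the uniform-in-$t$ convergence. Your contradiction argument---extracting worst-case times $t_n$, using the compact embedding $\BV(\Omega)\hookrightarrow\hookrightarrow L^1(\Omega)$, and identifying the $L^1$ and $H^{-1}$ limits through $\mathcal D'(\Omega)$---is precisely the standard compactness proof of that interpolation inequality, inlined into the lemma. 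So the two arguments rest on identical ingredients, but yours is self-contained and has the merit of making explicit the point the paper leaves implicit: the classical Ehrling lemma does not apply verbatim here, since $L^1(\Omega)$ does not embed continuously into $H^{-1}(\Omega)$, and the distributional compatibility of the two limits is the correct substitute. (One small correction to your side remark: that embedding already fails for $d=2$, not only $d\geq 3$, because $H^1_0\not\hookrightarrow L^\infty$ in two dimensions; this does not affect your argument, which is dimension-independent.) The paper's formulation is shorter and more modular; yours proves strictly more, since it also justifies the inequality the paper cites.
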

\begin{proof}
Since $\int |\na u_n(t)| \leq C$ and $u_n(t)$ converges to $u(t)$ in $H^{-1}(\Omega)$, we can show that
$u(t)\in BV(\Omega)$ and $\int |\na u(t)| \leq C$.

Next we note that for any $\delta>0$, there exists $C_\delta$ such that
$$
\| v \| _{L^1(\Omega) } \leq\delta  \| v \|_{\BV(\Omega) } + C_\delta  \| v \|_{H^{-1}(\Omega)}
 $$
for all $v\in\BV(\Omega)$.

In particular, this gives:
$$
\| u_n(t) - u(t) \| _{L^1(\Omega) } \leq\delta  \| u_n(t) - u(t) \|_{\BV(\Omega) } + C_\delta  \| u_n(t) - u(t) \|_{H^{-1}(\Omega)}
 $$
and the $BV$ bound, together with the strong convergence in $H^{-1}(\Omega)$ implies
$$
\limsup  
\| u_n(t) - u(t) \| _{L^\infty((0,T);L^1(\Omega)) } 
 \leq C\delta.
$$
Since this holds for all $\delta>0$, we deduce
$$\limsup  
\| u_n - u \| _{L^\infty((0,T);L^1(\Omega))} 
=0
$$
and the result follows.

\end{proof}

\medskip

\section{$\Gamma$-convergence of $\J_\eps$}
\label{app:energy}
We wish to prove the following proposition which gives the $\Gamma$ convergence of $\J_\eps$ to $\J_0$:
\begin{proposition}\label{prop:Gamma}
The following holds:
\item[(i)] For any family $\{ \rho^\eps\}_{\eps>0}$ that converges to $\rho$ in $L^1(\Omega)$,
 $$\liminf_{\eps\to 0} \J_\eps(\rho^\eps) \geq  \J_0 (\rho).$$

\item[(ii)] Given   $\rho\in L^1(\Omega)$, there exists a sequence $\{ \rho^\eps\}_{\eps>0}$ that converges to $\rho$ in $L^1(\Omega)$ such that
 $$\limsup_{\eps\to 0} \J_\eps(\rho^\eps) \leq  \J_0 (\rho).$$
\end{proposition}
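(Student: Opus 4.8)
The plan is to treat the two inequalities separately and, as throughout the paper, to reduce first to $\sigma=1$ by the scaling $\bar\phi=\sigma\phi$, $\bar\eps=\eps/\sqrt\sigma$, $\bar\beta=\sqrt\sigma\beta$, under which $\J_\eps$ and $\J_0$ transform in the obvious way. For the $\limsup$ inequality (ii) almost nothing is needed beyond \cite{MW}: since $\J_0(\rho)=+\infty$ unless $\rho=\chi_E\in\BV(\Omega;\{0,1\})$, the only nontrivial case is a characteristic target, and the recovery sequence built in \cite{MW} (Proposition 5.3) already consists of characteristic functions. As these lie in the admissible class $\{0\le\rho\le1\}$, the very same sequence gives $\limsup_\eps\J_\eps(\rho^\eps)\le\J_0(\rho)$ verbatim, so no new construction is required. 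Thus the real content of the proposition is the $\liminf$ inequality (i) for \emph{general} admissible $\rho^\eps$, and the role of the new formula \eqref{eq:Jepsd2} is precisely to replace the characteristic-function computation of \cite{MW} in that step.

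For (i) I would argue as follows. If $\liminf_\eps\J_\eps(\rho^\eps)=+\infty$ there is nothing to prove, so I pass to a subsequence along which $\J_\eps(\rho^\eps)\le C$; as $\J_\eps$ is finite only on $\{0\le\rho\le1\}$ I may assume $0\le\rho^\eps\le1$, hence $0\le\rho\le1$. Using formula \eqref{eq:Jepsd} (or \eqref{eq:Jepsd2}) the energy bound yields $\int_\Omega\rho^\eps(1-\rho^\eps)\,dx\le C\eps$ and $\int_\Omega(\rho^\eps-\phi^\eps)^2\,dx\le C\eps$, so letting $\eps\to0$ forces $\rho\in\{0,1\}$ a.e. and $\phi^\eps\to\rho$ in $L^2(\Omega)$. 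With $F$ as in the paper ($F'(t)=2\min(t,1-t)$) and $u_\eps,v_\eps$ as in the idea of proof of Proposition~\ref{prop:firstvar}, the elementary pointwise bound $(1-\rho)\phi^2+\rho(1-\phi)^2\ge\min(\phi^2,(1-\phi)^2)$, valid for all $0\le\rho\le1$, gives $|\na F(\phi^\eps)|\le 2u_\eps v_\eps\le u_\eps^2+v_\eps^2$. This is exactly the point where \eqref{eq:Jepsd2} is needed: its bulk integrand is precisely $(1-\rho)\phi^2+\rho(1-\phi)^2$, so every term stays nonnegative without assuming $\rho^\eps$ to be a characteristic function. Combined with \eqref{eq:Jepsd2} this produces $\tfrac12\int_\Omega|\na F(\phi^\eps)|\,dx+(\text{nonnegative boundary terms})\le\J_\eps(\rho^\eps)$.

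It then remains to pass to the $\liminf$, and here the treatment of the boundary mirrors the three cases of Proposition~\ref{prop:firstvar}, with the crucial simplification that the energy-convergence hypothesis \eqref{eq:cvass} is \emph{not} needed for a one-sided bound. For Neumann conditions ($\alpha=0$) there is no boundary term: since $F(\phi^\eps)\to F(\rho)=\tfrac12\rho$ in $L^1$, lower semicontinuity of the total variation (Proposition~\ref{prop:BV}) gives $\liminf_\eps\tfrac12\int_\Omega|\na F(\phi^\eps)|\ge\tfrac14\int_\Omega|\na\rho|=\J_0(\rho)$. For Dirichlet conditions ($\beta=0$) I extend $\phi^\eps$ by $0$ to $\bar\phi^\eps$ on $\R^d$, so the boundary trace of $F(\phi^\eps)$ becomes a jump of $F(\bar\phi^\eps)$, and apply lower semicontinuity of the total variation in $\R^d$ using $\int_{\R^d}|\na\bar\rho|=\int_\Omega|\na\rho|+\int_{\pa\Omega}\rho\,d\H^{n-1}$. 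For general Robin conditions I would combine the two, introducing $G(t)=\tfrac{\alpha}{\beta}t^2-F(t)$ and a continuous cutoff $\vphi:\R^d\to[0,1]$ with $\int_{\pa\Omega}|\chi_E-\vphi|\,d\H^{n-1}\le\delta$, exactly as in \eqref{eq:fdsfnkj}--\eqref{eq:liminfgh}; the bound $G(t)\ge\min\{0,\tfrac{\alpha}{\alpha+\beta}-\tfrac12\}$ on $[0,1]$ then produces the coefficient $\min(1,\tfrac{2\alpha}{\alpha+\sqrt\sigma\beta})$ after sending $\delta\to0$, giving $\liminf_\eps\J_\eps(\rho^\eps)\ge\J_0(\rho)$.

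The main obstacle is, as in Proposition~\ref{prop:firstvar}, the sharp boundary coefficient, and in particular the appearance of the minimum with $1$: capturing it requires turning the boundary trace of $F(\phi^\eps)$ into a jump of the zero-extension and balancing it against the Robin boundary energy through $G$, the point being that when $\tfrac{2\alpha}{\alpha+\sqrt\sigma\beta}>1$ it is cheaper for the limiting phase to detach from $\pa\Omega$ than to pay the boundary energy (this is exactly why the naive limit of $\J_\eps(\chi_E)$ in Proposition~\ref{prop:4} fails to be lower semicontinuous). Relative to \cite{MW}, however, this boundary analysis is essentially unchanged; the genuinely new ingredient is only the reduction of a general admissible sequence to the characteristic-function regime, achieved through $\phi^\eps\to\rho$ and the Young inequality above, both of which rest on formula \eqref{eq:Jepsd2}.
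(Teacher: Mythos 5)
Your proposal is correct and follows essentially the same route as the paper's proof in Appendix~\ref{app:energy}: part (ii) by citing the recovery sequence of \cite{MW}, and part (i) via the energy bound from \eqref{eq:Jepsd}--\eqref{eq:Jepsd2} forcing $\phi^\eps\to\rho\in\{0,1\}$, the Young-inequality estimate $|\na F(\phi^\eps)|\le u_\eps^2+v_\eps^2$, and $BV$ lower semicontinuity, with the zero-extension handling the Dirichlet boundary term. The only difference is cosmetic: where the paper's appendix simply says the Robin case follows by ``combining both arguments, as in \cite{MW},'' you spell out that combination using the cutoff $\vphi$ and the function $G$ exactly as in the Robin step of the proof of Proposition~\ref{prop:firstvar}, which is precisely the intended argument.
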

We recall that this proposition is proved in \cite{MW} (Proposition 5.3) when $\J_\eps$ is restricted to characteristic functions. We show below how the proof can be adapted to our more general case.

\begin{proof}[Proof of Proposition \ref{prop:Gamma}]
First, we note that the limsup properties (part (ii)) follows from the corresponding result in \cite[Proposition 5.3]{MW} .
Indeed, if $\rho\notin \BV(\Omega;\{0,1\})$, then $\J_0(\rho)=\infty$ and there is nothing to prove,
while if $\rho\in \BV(\Omega;\{0,1)\}$ then $\rho=\chi_E$ for some $E$ satisfying $P(E)<\infty$, so 
\cite[Proposition 5.3]{MW}  applies.
%formula \eqref{eq:Jepsd} shows that $\limsup_{\eps\to 0} \J_\eps(\rho)=\infty$ so that we can take $\rho^\eps=\rho$. 

\medskip

To prove the liminf property (part (i)), we need to slightly modify the proof of 
\cite[Proposition 5.3]{MW} by using the formula
\eqref{eq:Jepsd2} instead of \eqref{eq:Jepsd} for $\J_\eps$.
We only provide details in the case of Neumann and Dirichlet conditions (the Robin boundary condition is then proved combining both arguments, as in \cite{MW}).
\medskip

\noindent{\bf Neumann boundary conditions ($\alpha=0$).} 
If $\liminf_{\eps\to 0} \J_\eps(\rho^\eps)=\infty$, there is nothing to prove, so we can assume (up to a subsequence) that $\J_\eps(\rho^\eps)\leq C$ and $\liminf_{\eps\to 0} \J_\eps(\rho^\eps)<\infty$.
Up to another subsequence, we can also assume that $\rho^\eps \to \rho$ a.e. in $\Omega$.

Next, we recall that  \eqref{eq:Jepsd2} gives 
\begin{equation}\label{eq:Jneu}
\J_\eps(\rho)  = \frac{1}{2\sigma\eps}\int_\Omega  (1-\rho) (\sigma\phi)^2 + \rho(1-\sigma \phi)^2 \, dx +\frac{ \eps}{2} \int_\Omega |\na \phi| ^2\, dx
\end{equation}
and introducing the functions (both defined for $t\in [0,1]$)
$$f(t)=2 \min(t,1-t), \qquad F(t) = \int_0^t f(\tau)\, d\tau = \begin{cases} t^2 & \mbox{ for } 0\leq t\leq 1/2 \\ 2t-t^2 -\frac 1 2 & \mbox{ for } 1/2 \leq t\leq 1\end{cases} $$
we find  (see \eqref{eq:BVphi}):
\begin{equation}\label{eq:lNJK}
\frac{1}{2\sigma^{3/2}} \int_\Omega |\na F(\sigma {\phi^\eps})| \, dx\leq  \J_\eps (\rho^\eps) .
\end{equation}
%It follows that the sequence $F(\sigma\phi^\eps)$ is bounded in $\BV(\Omega)$.

\medskip

Furthermore, \eqref{eq:Jepsd} give
$$  \J_\eps (\rho)   =  \frac{1}{2\sigma \eps}\int_\Omega  \rho(1-\rho)\, dx
+  \frac{1}{2\sigma \eps}  \int _{\Omega}(\rho-\sigma \phi)^2 \, dx  + 
\frac{ \eps}{2}  \int_\Omega |\na \phi| ^2\, dx
$$
which implies
$$
 \int _{\Omega}(\rho^\eps-\sigma \phi^\eps)^2 \, dx\leq  2\sigma \eps \J_\eps(\rho^\eps)\leq C\eps$$
and 
$$\int_\Omega  \rho^\eps(1-\rho^\eps)\, dx\leq 2\sigma \eps \J_\eps(\rho^\eps)\leq C\eps.$$
The first inequality implies that $\sigma \phi^\eps$ converges in $L^2$ to $\rho$. The second inequality implies that
that $\rho=0$ or $1$ a.e. in $\Omega$.\

We deduce that $F(\sigma\phi^\eps)$ converges (strongly in $L^1$ for example) to $F(\rho)=\frac 1 2 \rho$ (since $F(0)=0$ and $F(1) =1/2$), and \eqref{eq:lNJK} gives
$$
\liminf_{\eps\to 0} \J_\eps (\rho^\eps) \geq\frac{1}{2\sigma^{3/2}}  \int_\Omega |\na F(\rho)| \, dx = 
\frac{1}{4\sigma^{3/2}} \int_\Omega |\na \rho| \, dx = \J_0(\rho) .
$$

\medskip

\noindent{\bf Dirichlet boundary conditions.} 
We still have \eqref{eq:Jneu} and thus \eqref{eq:lNJK} in this case, but since $\phi^\eps=0$ on $\pa\Omega$, we can extend the function $\phi^\eps$ by zero outside $\Omega$. Denoting by $\overline{\phi^\eps}$ this extension, we find 
$$\J_\eps (\rho^\eps)\geq \frac{1}{2\sigma^{3/2}} \int_\Omega |\na F(\phi^\eps)| \, dx = \frac{1}{2\sigma^{3/2}} \int_{\R^n} |\na F(\overline{\phi^\eps})| \, dx $$
and so (proceeding as above)
\begin{align*}
\liminf_{\eps\to 0} \J_\eps (\rho^\eps) \geq \frac{1}{2\sigma^{3/2}}  \int_{\R^n} |\na F(\rho)| \, dx & =  \frac{1}{4\sigma^{3/2}}  \int_{\R^n} |\na \rho| \, dx  \\
& = \frac{1}{4\sigma^{3/2}}  \left[ \int_{\Omega} |\na \rho| \, dx + \int_{\pa\Omega} \rho\, dH^{n-1}(x)\right]\\
& = \J_0(\rho).
\end{align*}

\end{proof}

%Since $v_n\to 0$ strongly in $L^1((0,T)\times \Omega)$, we need to show that $w_n \to u $  strongly in $L^1((0,T)\times \Omega)$.
%Clearly, we have that $w_n$ converges to $u$ weakly in $L^1$ and since $ \| w_n \|_{L^1((0,T);BV( \Omega))}$, we get that $u\in L^1((0,T);BV( \Omega))$. In particular, we have
%$$\| w_n -u \|_{L^1((0,T);BV( \Omega))} \leq C.$$

%Next, we note that for any $\delta$, there exists $C_\delta$ such that
%$$
%\| w_n(t) - u(t) \| _{L^1(\Omega) } \leq\delta  \| w_n(t) - u(t) \|_{BV(\Omega) } + C_\delta  \| w_n(t) - u(t) \|_{H^{-1}(\Omega}
% $$
%and so
%\begin{align*}
%\| w_n - u \| _{L^1((0,T)\times \Omega) } 
%& \leq \delta  \| w_n - u\|_{L^1(0,T;BV(\Omega)) }+ 
%C_\delta\int_0^T  \| u_n(t) - u(t) \|_{H^{-1}}+ \| v_n(t) \|_{H^{-1}}\, dt\\
%& \leq C \delta + 
%C_\delta\int_0^T  \| u_n(t) - u(t) \|_{H^{-1}}+ \| v_n(t) \|_{L^2(\Omega)}\, dt
%\end{align*}
%Finally, the bound $\| u_n\|_{C^{1/2}(0,T;H^{-1}(\Omega))}\leq C$ implies that $u_n(t)$ converges to $u(t)$ uniformly in $H^{-1}(\Omega)$. Together with the strong convergence of $v_n$, this implies
%$$
%\limsup  
%\| w_n - u \| _{L^1((0,T)\times \Omega) } 
% \leq C\delta
%$$
%and since this holds for all $\delta>0$, we deduce
%$$\limsup  
%\| w_n - u \| _{L^1((0,T)\times \Omega) } 
%=0
%$$
%and the result follows.  

\bibliographystyle{plain}
\bibliography{JKO_chemotaxis}

\end{document}